\newtheorem{theorem}{Theorem}[section]
\newtheorem{lemma}[theorem]{Lemma}
\newtheorem{definition}[theorem]{Definition}
\newcommand{\ba}{\begin{array}}
\newcommand{\ea}{\end{array}}
\newcommand{\ACal}{\mathcal{A}}
\title{OPTAMI: Global Superlinear Convergence of High-order Methods}
\author{%
  Dmitry Kamzolov\textsuperscript{1}\thanks{Corresponding author. e-mail: kamzolov.opt@gmail.com}, Dmitry Pasechnyuk\textsuperscript{1}, Artem Agafonov\textsuperscript{1,2}, 
  \\
  \textbf{Alexander Gasnikov\textsuperscript{3,2,4}, Martin Tak\'a\v{c}\textsuperscript{1}}  \\
    \textbf{\textsuperscript{1}} Mohamed bin Zayed University of Artificial Intelligence, Abu Dhabi, UAE\\ 
    \textbf{\textsuperscript{2}} Moscow Institute of Physics and Technology, Dolgoprudny, Russia\\
\textbf{\textsuperscript{3}} Innopolis University, Kazan, Russia\\
\textbf{\textsuperscript{4}} Skoltech, Moscow, Russia
}
\begin{document}

\maketitle

\begin{abstract}
  Second-order methods for convex optimization outperform first-order methods in terms of theoretical iteration convergence, achieving rates up to $O(k^{-5})$ for highly-smooth functions. However, their practical performance and applications are limited due to their multi-level structure and implementation complexity. In this paper, we present new results on high-order optimization methods, supported by their practical performance. First, we show that the basic high-order methods, such as the Cubic Regularized Newton Method, exhibit global superlinear convergence for $\mu$-strongly star-convex functions, a class that includes $\mu$-strongly convex functions and some non-convex functions. Theoretical convergence results are both inspired and supported by the practical performance of these methods. Secondly, we propose a practical version of the Nesterov Accelerated Tensor method, called NATA. It significantly outperforms the classical variant and other high-order acceleration techniques in practice. The convergence of NATA is also supported by theoretical results. Finally, we introduce an open-source computational library for high-order methods, called OPTAMI. This library includes various methods, acceleration techniques, and subproblem solvers, all implemented as PyTorch optimizers, thereby facilitating the practical application of high-order methods to a wide range of optimization problems. We hope this library will simplify research and practical comparison of methods beyond first-order.
\end{abstract}

\section{Introduction}
In this paper, we consider the following unconstrained optimization problem:
\begin{equation}
\min\limits_{x\in \E}  f(x),
\label{eq:problem}
\vspace{-.1cm}
\end{equation}
where $\E$ is a $d$-dimensional real value space and $f(x)$ is a highly-smooth function:
\vspace{-.1cm}
\begin{definition}
Function $f$ has $L_p$ - Lipschitz-continuous $p$-th derivative, if
\begin{equation}
    \|D^p f(x)- D^p f(y)\|_{op} \leq L_{p}\|x-y\| \qquad \forall x,y \in \bbE,
    \label{def_lipshitz}
\end{equation}
where $D^p f(x)$ is a $p$-th order derivative, and $\| \cdot \|_{op}$ is an operator norm.
\end{definition}
\vspace{-.1cm}
In the paper, we primarily focus on three main cases: $p=1$, $p=2$, and $p=3$. We assume that function $f(x)$ has at least one minimum $x^{\ast}$. For convexity-type assumptions, we use star-convexity and uniform star-convexity for non-accelerated methods and convexity for accelerated methods. 
Note, that the classes of star-convex or uniformly star-convex functions include some non-convex functions.
\vspace{-.4cm}
\begin{definition}
Let $x^{\ast}$ be a minimizer of the function $f$. For $q \geq 2$ and $\mu_q\geq 0$, the function $f$ is \textbf{$\mu_q$-uniformly star-convex of degree $q$} with respect to $x^{\ast}$ if for all 
$x \in \R^d$
and $\forall \al\in [0,1]$
\begin{equation}
    \label{eq:star-convex-strong}
     f\ls \al x + (1-\al) x^{\ast}\rs \leq   \al f(x) + (1-\al) f(x^{\ast})  \\
       - \tfrac{\al (1-\al)\mu_q}{q}\|x-x^{\ast}\|^q.
\end{equation}
If $q = 2$ then the function $f$ is \textbf{$\mu$-strongly star-convex} with respect to $x^{\ast}$.
If $\mu_q = 0$ then the function $f$ is \textbf{star-convex} with respect to $x^{\ast}$. From this definition, we can additionally get the next useful inequality sometimes called $q$-order growth condition
\begin{equation}
    \label{eq:strong-growth}
    \tfrac{\mu_q}{q}\|x-x^{\ast}\|^q \leq   f(x) - f(x^{\ast}).
\end{equation}
\end{definition}
\vspace{-.1cm}
We introduce two types of distance measures between the starting point and the solution: for non-accelerated methods, we consider the diameter of the level set $\mathcal{L}~=~\lb x \in \bbE : f(x) \leq f(x_0) \rb$
\begin{equation}
\label{eq:D_distance}
    D~=~\max\limits_{x \in \mathcal{L}} \|x - x^{\ast}\|;
\end{equation}
and for accelerated methods, we use the Euclidean distance given by
\begin{equation}
\label{eq:R_distance}
R = \|x_0 - x^{\ast}\|.
\end{equation}
Second-order methods are widely used in optimization, finding applications in diverse fields such as machine learning, statistics, control, and economics \citep{polyak1987introduction, boyd2004convex,nocedal1999numerical, nesterov2018lectures}. One of the main and well-known advantages of the second-order methods is their local quadratic convergence rate. 
The classical (Damped) Newton method \citep{newton1687philosophiae,raphson1697analysis,kantorovich1948newton} can be written as:
\vspace{-.1cm}
\begin{equation}
    \label{eq:classical_Newton}
    x_{t+1} = x_t - \gamma_t\left[\nabla^2 f(x_t) \right]^{-1} \nabla f(x_t),
    \vspace{-.1cm}
\end{equation}
where $\gamma_t\in \R_{+}$ is a step-size or damping coefficient. The Newton step originates from the second-order Taylor expansion $\Phi_2(x,x_t)$:
\vspace{-.1cm}
\begin{equation}
    \label{eq:taylor_Newton}
    x_{t+1} = \argmin_{x \in \E} \lb  \Phi_2(x, x_t) =
    f(x_t) + \la \nabla f(x_t), x - x_t \ra + \la \nabla^2 f(x_t) (x - x_t), x- x_t \ra \rb.  
\vspace{-.2cm}
\end{equation}
The solution of this problem corresponds to \eqref{eq:classical_Newton} with $\gamma_t=1$. 
Despite its fast local convergence, the Damped Newton method has a slow global convergence rate $O(T^{-1/3})$ \citep{berahas2022quasi}, which is even slower than the gradient method's convergence rate $O(T^{-1})$. One reason for this slow convergence is that the approximation $\Phi_{2}(x,x_t)$ is not an upper-bound for $f(x)$, and it is possible that $f(x) > \Phi_{2} (x,x_t)$. Consequently, the step could be too aggressive and without a damping step-size $\gamma_t$, the Newton method could diverge \citep[Example 1.2.3]{nesterov1983method}. 
To address this issue, the Cubic Regularized Newton (CRN) method was proposed by \citet{nesterov2006cubic}. 
\vspace{-.1cm}
\begin{equation}
    \label{eq:Cubic_Newton}
    x_{t+1} = \argmin_{y \in \E} \lb \Omega_{M_2}(x,x_t) = \Phi_2(x, x_t) + \tfrac{M_2}{6} \|x-x_t\|^3 \rb.
    \vspace{-.2cm}
\end{equation}
For the function $f(x)$ with $L_2$-Lipschitz Hessian, the model $\Omega_{M_2} (y, x_t)$ is an upperbound of the function $f(x)$ for $M_2 \geq L_2$; hence $\Omega_{M_2} (x, x_t) \geq f(x)$. 
This method is the first second-order method with a global convergence rate of $O(T^{-2})$, which is faster than the Gradient Method (GM). It also has a quadratic local convergence rate for strongly convex functions similar to the classical Newton method \citep{doikov2022local}. 

In large-scale optimization problems, the computation of the inverse Hessian can be rather costly. 
This limitation significantly restricts the practical application of second-order methods for large-scale problems. 
One of the possible solutions to this problem is to approximate the Hessian using gradient information or Hessian-vector products (HVP). These methods are known as Quasi-Newton (QN) methods, which include BFGS \citep{broyden1967quasi,fletcher1970new,goldfarb1970family,shanno1970conditioning}, L-BFGS \citep{liu1989limited}, SR-1 \citep{conn1991convergence,khalfan1993theoretical,byrd1996analysis}, among others \citep{nocedal1999numerical, gupta2018shampoo}. The approximate Hessian can also have a form of diagonal preconditioning \citep{yao2021adahessian, jahani2022doubly, sadiev2024stochastic}. The classical form of the QN update looks as follows:
\vspace{-.1cm}
\begin{equation}
    \label{eq:classical_Quasi-Newton}
    x_{t+1} = x_t - \gamma_t B_t^{-1} \nabla f(x_t) = x_t - \gamma_t H_t \nabla f(x_t),
    \vspace{-.1cm}
\end{equation}
where $B_t \approx \nabla^2 f(x_t)$ is a Hessian approximation and $H_t \approx [\nabla^2 f(x_t)]^{-1}$ is an approximation of inverse Hessian, in a lot of cases $H_t=B_t^{-1}$. These methods have local superlinear convergence and could perform well globally in practice. However, they have poor global convergence $O(T^{-1/3})$ \citep{berahas2022quasi}, similar to the Newton method. In \citep{kamzolov2023accelerated}, the Cubic Regularized Quasi-Newton method was proposed with a global convergence rate $O\ls\tfrac{\delta D^2}{T} + \tfrac{L_2R^3}{T^2}\rs$, where $\delta = \max_{x_t}\|B_t - \nabla^2 f(x_t)\|_2$ 
is a measure of inexactness of the Hessian approximation. Depending on the approximation, one can theoretically guarantee that $\delta\leq L_1$, which makes the method intermediate between the gradient method and the Cubic Regularized Newton method. This observation indicates that second-order methods can surpass first-order methods; even when relying solely on first-order information, the Cubic Quasi-Newton method can perform at least as well as, if not better than, the gradient method. 

Such potential motivates us to study second-order methods. The Cubic Regularized Newton is the basic method in the line-up of second-order methods. There are two main directions for its improvement: accelerated second-order methods, including Nesterov-type acceleration~\citep{nesterov2008accelerating, nesterov2021implementable}, A-NPE near-optimal acceleration~\citep{monteiro2013accelerated, gasnikov2019near}, and optimal acceleration~\citep{kovalev2022first, carmon2022optimal}; and third-order methods with superfast subsolvers, which allows making a third-order step without computation of third-order derivative \citep{nesterov2021implementable,nesterov2021superfast, nesterov2021auxiliary, kamzolov2020near}.
In this paper, we explore both of these directions from theoretical and practical perspectives, which may provide valuable insights into the practical performance of various acceleration techniques and high-order methods.

We are particularly interested in $\mu$-strongly convex function $f(x)$ with $L_2$-Lipschitz Hessian. Existing results on second-order methods primarily focus on the local quadratic convergence. Specifically, these method are capable of achieving an $\varepsilon$-solution $x_T$, such that $f(x_T)-f^{\ast}\leq \varepsilon$, within at most $\tilde{O}(\log\log\tfrac{1}{\varepsilon})$ iterations, where $\tilde{O}(\cdot)$ denotes complexity up to a logarithmic factors. To get this rate, the methods have to be inside of the quadratic convergence region $\mathcal{Q} = \lb x \in \E: f(x) - f^{\ast}\leq r\rb$, where the radius $r$ is given by
\vspace{-.1cm}
\begin{equation}
    \label{eq:radius}
    r = \tfrac{c_1\mu^3}{L_2^2}
    \vspace{-.1cm}
\end{equation}
for some numerical constant $c_1$. Now, let us illustrate this idea using the example of Cubic Regularized Newton method (CRN) \citep{doikov2022local}. Initially, the methods performs at most
\begin{equation}
    \label{eq:cubic_linear_intro}
    T = O\ls \ls\tfrac{L_2D}{\mu}\rs^{1/2}\log\ls \tfrac{f(x_0)-f^{\ast}}{\varepsilon}\rs\rs
\end{equation}
iterations to reach $f(x_T)-f^{\ast}\leq \varepsilon$ for $\varepsilon > r$.
If $\varepsilon \leq r$, the CRN method enters the quadratic convergence region after at most $T_1 = O\ls \ls\tfrac{L_2 D}{\mu}\rs^{1/2}\log\ls \tfrac{f(x_0)-f^{\ast}}{r}\rs\rs$ iterations. Once in this region, the method requires at most $T_2 = O\ls \log \log \ls \tfrac{r}{\varepsilon}\rs\rs$ additional iterations. Therefore, the total number of CRN iterations to reach $f(x_T)-f^{\ast}\leq \varepsilon$ for $\varepsilon \leq r$ is upper bounded by 
\begin{equation*}
    \label{eq:quadratic_rate}
    T = O\ls \ls\tfrac{L_2 D}{\mu}\rs^{1/2}\log\ls \tfrac{(f(x_0)-f^{\ast})L_2^2}{\mu^3}\rs +  \log \log \ls \tfrac{\mu^3}{L_2^2 \varepsilon}\rs\rs.
\end{equation*}
For the class of $\mu$-strongly convex functions with $L_2$-Lipschitz Hessian, the lower bound is 
\begin{equation*}
    \label{eq:quadratic_lower_bound}
\Omega \ls \ls\tfrac{L_2 D}{\mu}\rs^{2/7} + \log \log \ls \tfrac{\mu^3}{L_2^2\varepsilon}\rs\rs
\end{equation*}
for $\varepsilon \leq c_3\tfrac{\mu^3}{L_2^2}=c_3 r$ as established by \citet{arjevani2019oracle}. The power $2/7$ corresponds to the optimal accelerated method. However, this lower bound applies only when $\varepsilon \leq c_3 r$, which corresponds to small values of $\varepsilon$. In the case when $\varepsilon \leq c_3 r$, meaning the desired accuracy exceeds the radius of the quadratic convergence region, it may be possible to achieve faster global rates of Cubic Regularized Newton method than the rate given by \eqref{eq:cubic_linear_intro}. 
\\
Another problem, we are interested in, is the practical performance of Nesterov Accelerated Tensor Method \citep{nesterov2021implementable}. In contrast to first-order methods, classical acceleration is slower or similar to the non-accelerated version in practice, which leads to avoidance of the method \citep{scieur2023adaptive,carmon2022optimal,antonakopoulos2022extra}. 
\\
Lastly, there is a wide variety of acceleration techniques, which include Nesterov acceleration \citep{nesterov2021implementable} with a rate $O(T^{-(p+1)})$; Near-Optimal Monteiro-Svaiter Acceleration \citep{monteiro2013accelerated,bubeck2019near,gasnikov2019near,kamzolov2020near} with a rate $\tilde{O}(T^{-(3p+1)/2})$; Near-Optimal Proximal-Point Acceleration \citep{nesterov2021auxiliary} with the rate $\tilde{O}(T^{-(3p+1)/2})$; 
Optimal Acceleration \citep{kovalev2022first, carmon2022optimal} with a rate $O(T^{-(3p+1)/2})$ and more~\citep{nesterov2023inexact}. However, the current literature is lacking their practical comparison, especially for high-order methods with $p=3$. All these considerations lead us to the next questions about the performance of high-order methods:
\begin{center}\textit{
Can second-order methods for strongly convex functions achieve global rates faster than linear?\\
Why may Nesterov Accelerated Tensor method underperform in practice, how to overcome this issue?\\
Which acceleration technique for high-order methods is faster in practice?}
\end{center}
In our work, we attempt to answer these questions in a systematic manner.

\textbf{Contributions.} We summarize our key contributions as follows:
\begin{enumerate}
    \item \textbf{Global Superlinear Convergence of Second and High-order methods.} Our main contribution is providing theoretical guarantees for \textit{global superlinear convergence} of the Cubic Regularized Newton Method and the Basic Tensor Methods for $\mu$-strongly star-convex functions. These theoretical results are validated by practical performance. These results are a significant improvement over the current state-of-the-art in second-order methods. 
    \item \textbf{Nesterov Accelerated Tensor Method with $A_t$-Adaptation (NATA).}
    We propose a new practical variant of the Nesterov Accelerated Tensor Method, called NATA. This method addresses the practical limitations of the classical version of acceleration for high-order methods. We demonstrate the superior performance of NATA compared to both the classical Nesterov Accelerated Tensor Method and Basic Tensor Method for $p=2$ and $p=3$. We also prove a convergence theorem for NATA that matches the classical convergence rates.
    \item \textbf{Comparative Analysis of High-Order Acceleration Methods.} 
    We provide a practical comparison of state-of-the-art (SOTA) acceleration techniques for high-order methods, with a focus on the cases $p=2$ and $p=3$. Our experiments show that the proposed NATA method consistently outperforms all SOTA acceleration techniques, including both optimal and near-optimal methods.
    \item \textbf{Open-Source Computational Library for Optimization Methods (OPTAMI\footnote{\url{https://github.com/OPTAMI/OPTAMI}}).} 
    We introduce \textit{OPTAMI}, an open-source library for high-order optimization methods. It facilitates both practical research and applications in this field. Its modular architecture supports various combinations of acceleration techniques with basic methods and their subsolvers. All methods are implemented as PyTorch optimizers. This allows for seamless application of high-order methods to a wide range of optimization problems, including neural networks.
\end{enumerate}

 \vspace{-.4cm}
\section{Basic Methods}
\vspace{-.3cm}
\textbf{Notation.}
In the paper, we consider a $d$-dimensional real value space $\E$. $\E^\ast$ is a dual space, composed of all linear functionals on $\E$. For a functional $g \in \mathbb{E}^\ast$, we denote by $\la g,x\ra$ its value at $x\in \mathbb{E}$. For $p\geq 1$, we define $D^p f(x)[h_1,\ldots,h_p]$ as a directional $p$-th order derivative of $f(x)$ along $h_i\in \bbE, \, i = 1, \ldots, p$. If all $h_i=h$ we simplify $D^p f(x)[h_1,\ldots,h_p]$ as $D^p f(x)[h]^p$.
So, for example, $D^1 f(x)[h] =\langle\nabla  f(x),h\rangle$ and
$D^2 f(x)[h]^2 = \la\nabla^2 f(x) h, h \ra $. Note, that  
$\nabla f(x) \in \bbE^{\ast}$, $\nabla^2 f(x) h \in \bbE^{\ast}$. Now, we introduce different norms for spaces $\bbE$ and $\bbE^{\ast}$. For a self-adjoint positive-definite operator $B : \bbE \rightarrow \bbE^\ast$, we can endow these spaces with conjugate Euclidian norms:
\vspace{-.1cm}
\begin{equation*}
\|x\| = \la Bx,x\ra^{1/2}, \quad x \in \bbE, \qquad \|g\|_{\ast} = \la g,B^{-1}g\ra^{1/2},  \quad g \in \bbE^{\ast}.
    \vspace{-.1cm}
\end{equation*}
So, for an identity matrix $B=I$, we get the classical $2$-norm $\|x\|_2=\|x\|_I = \la x,x\ra^{1/2}$.

\textbf{High-order model.}
High-order Taylor approximation of a function $f(x)$ can be written as follows:
\vspace{-.1cm}
\begin{equation}
    \Phi_{x, p}(y)=f(x)+\sum_{k=1}^{p}\tfrac{1}{k!}D^{k}f(x)\left[ y-x \right]^k, \quad x, y\in \bbE,
    \label{eq_taylor}
\vspace{-.1cm}
\end{equation}
where, for $p=1$, we simplify notation to $\Phi_{x}(y)$.
From \eqref{def_lipshitz}, we can get the next upper-bound of the function $f(x)$  \citep{nesterov2018lectures,nesterov2021implementable}
\vspace{-.1cm}
\begin{equation}
    |f(y) - \Phi_{x, p}(y)| \leq \tfrac{L_{p}}{(p+1)!}\|y-x\|^{p+1},
    \label{eq:upper-bound}
\vspace{-.1cm}
\end{equation}
which leads us to the high-order model
\vspace{-.1cm}
\begin{equation}
    \label{eq:model_ho}
    \Omega_{x,M_p}(y) = \Phi_{x, p}(y) + \tfrac{M_p}{(p+1)!}\|y-x\|^{p+1}.
\vspace{-.1cm}
\end{equation}
Now, we can formulate the Basic Tensor method 
\vspace{-.1cm}
\begin{equation}
   x_{t+1} = \argmin_{y\in \bbE} \lb \Omega_{x_t,M_p}(y)\rb,
    \label{eq:model_step}
\vspace{-.1cm}
\end{equation}
where $M_p\geq p L_p$. For $p=1$ and $M_1 \geq L_1$, it is the gradient descent step 
\vspace{-.1cm}
\begin{equation}
   x_{t+1} = x_t - \tfrac{1}{M_1} \nabla f(x_t)
    \label{eq:gd}
\vspace{-.1cm}
\end{equation}
with the convergence rate $O\ls \tfrac{M_1R^2}{T}\rs$ for convex functions.
For $p=2$ and $M_2\geq L_2$, it is a Cubic Regularized Newton Method \citep{nesterov2006cubic}:
\vspace{-.1cm}
\begin{equation}
   x_{t+1} = x_t + \argmin_{h\in \bbE} \lb f(x_t) + \nabla f(x_t)\lp h \rp + \tfrac{1}{2} \nabla^2 f(x_t)\lp h\rp^2 + \tfrac{M_2}{6}\|h\|^{3} \rb,
    \label{eq:cubic}
\vspace{-.1cm}
\end{equation}
with the convergence rate $O\ls \tfrac{M_2D^3}{T^2}\rs$.
For $p=3$ and $M_3\geq 3L_3$, it is a Basic Third-order Method \citep{nesterov2021implementable}:
\vspace{-.1cm}
\begin{equation}
   x_{t+1} = x_t + \argmin_{h\in \bbE} \lb f(x_t) + \nabla f(x_t)\lp h \rp + \tfrac{1}{2} \nabla^2 f(x_t)\lp h\rp^2 + \tfrac{1}{6} D^3 f(x_t)\lp h\rp^3 + \tfrac{M_3}{24}\|h\|^{4} \rb,
    \label{eq:third-order}
\vspace{-.1cm}
\end{equation}
with the convergence rate $O\ls \tfrac{M_3D^4}{T^3}\rs$. The step \eqref{eq:third-order} can be performed with almost the same computational complexity (up to a logarithmic factor) by using the Bregman Distance Gradient Method as a subsolver \citep{nesterov2021implementable, nesterov2021superfast}. The details are written in the Appendix \ref{ap:tensor_subsolver}.

\section{Global Superlinear Convergence of High-order Methods for Strongly Star-Convex Functions}
\vspace{-.3cm}
In this section, we establish the global superlinear convergence of high-order methods for uniformly star-convex functions. We begin by defining global superlinear convergence.
\vspace{-.2cm}
\begin{definition}
   A method is said to exhibit a \textbf{global superlinear convergence rate} with respect to the functional gap if there exists a sequence $\zeta_t$ for all $t \in \lb 0, \ldots, T\rb$ such that
   \vspace{-.1cm}
    \begin{equation}
    \label{eq:rate_superlinear}
        \tfrac{f(x_{t+1}) - f^{\ast}}{f(x_{t}) - f^{\ast}}\leq \zeta_t,  \qquad 1>\zeta_t>\zeta_{t+1} \quad \forall t \in \lb 0,\ldots,T\rb, \qquad \text{and} \qquad \zeta_t \rightarrow 0 \,\,\, \textit{for} \,\,\, t \rightarrow 0.
    \vspace{-.4cm}
    \end{equation}
\end{definition}
The essence of this definition lies in the fact that the scaling coefficient $\zeta_t$ decreases with each iteration. If $\zeta_t$ remains constant, the method achieves linear convergence. Conversely, if $\zeta_t$ increases over time (i.e., $\zeta_t < \zeta_{t+1}$), the convergence becomes sublinear. For example, gradient descent exhibits global linear convergence for strongly convex functions, with $\zeta_t = 1 - \tfrac{\mu}{L_1+\mu}$. 
\\
Now, to get some intuition on the performance of the methods, we begin with two simple and classical examples: the $l_2$-regularized logistic regression problem and the $l_2$-regularized Nesterov's lower-bound function. The logistic regression problem can be formulated as
\vspace{-.1cm}
\begin{equation}
    \label{eq:logreg_main}
    f(x) = \tfrac{1}{n} \textstyle{\sum}_{i=1}^{n} \log \ls 1 + e^{-b_i \la a_i, x\ra} \rs + \tfrac{\mu}{2}\|x\|_2^2, 
\vspace{-.1cm}
\end{equation}
where $a_i \in \R^d$ are data features and $b_i \in \lb -1; 1\rb$ are data labels for $i = 1, \ldots, n$. The $l_2$-regularized third order Nesterov's lower-bound function from \citep{nesterov2021implementable} has the next form
\vspace{-.1cm}
\begin{equation}
    \label{eq:nesterov_func}
    f(x) = \tfrac{1}{4}\textstyle{\sum}_{i=1}^{d-1} (x_i - x_{i+1})^4 - x_1 + \tfrac{\mu}{2}\|x\|_2^2.
\vspace{-.1cm}
\end{equation} 
Next, we present two figures illustrating the convergence of the methods for both functions.
\vspace{-.3cm}
\begin{figure}[H]
\begin{subfigure}{0.48\textwidth}
    \includegraphics[width=0.98\linewidth]{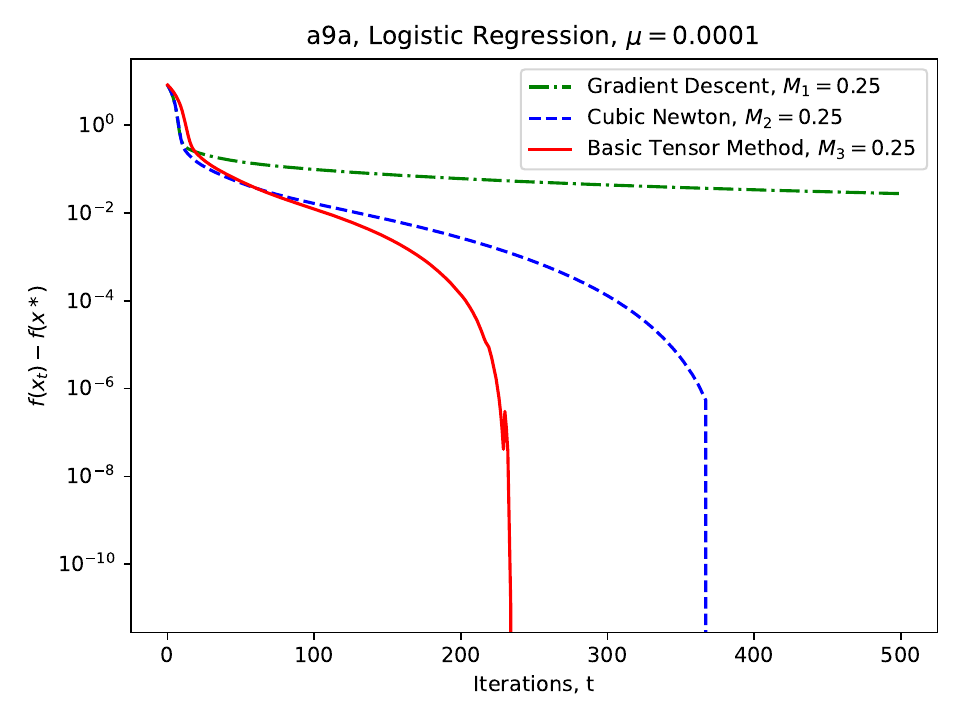}
    \vspace{-.2cm}
  \caption{Logistic Regression for \texttt{a9a} dataset from the starting point $x_0= 3e$, where the regularizer $\mu = 1e-4$ and $e$ is a vector of all ones.}
  \label{fig:a9a_example}
\end{subfigure}
\hfill
\begin{subfigure}{0.48\textwidth}
  \includegraphics[width=0.98\linewidth]{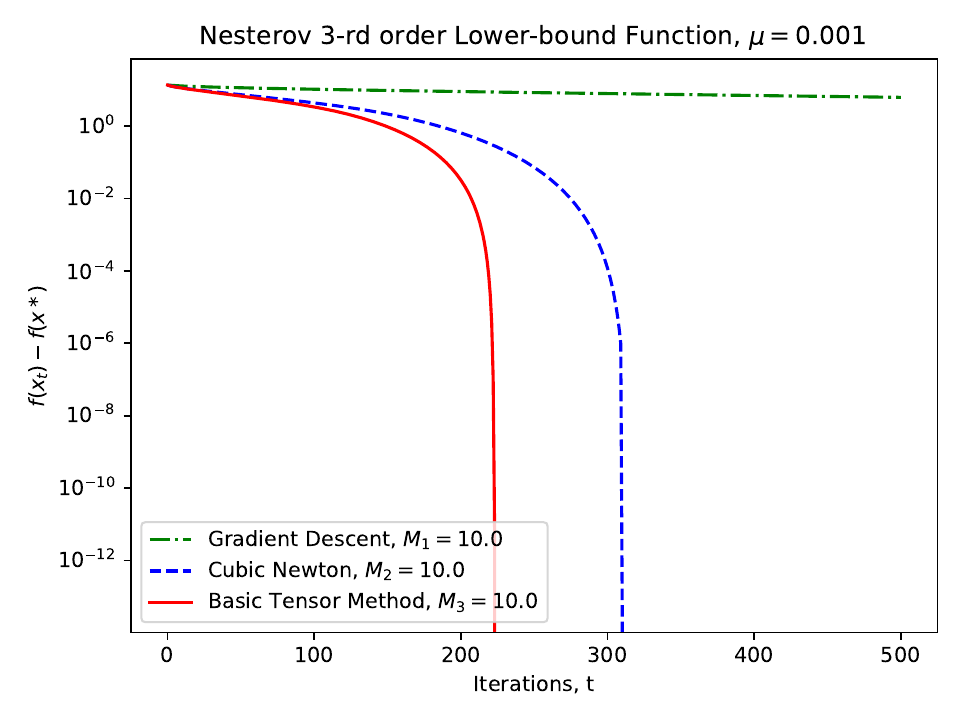}
  \vspace{-.2cm}
  \caption{Third order Nesterov's lower-bound function from the starting point $x_0= 0$, where the regularizer $\mu = 1e-3$.}
  \label{fig:nesterov_example}
\end{subfigure}
\vspace{-.2cm}
    \caption{Comparison of the basic methods by the functional gap to the solution with superlinear convergence for Cubic Newton and Basic Tensor Method.}
    \label{fig:nesterov+logreg}
    \vspace{-.5cm}
\end{figure}
Figure \ref{fig:nesterov+logreg} illustrates that both the Cubic Newton method and Basic Tensor method have areas of superlinear convergence where the graphics are going down faster with each iteration (concave downward). In contrast, gradient descent demonstrates linear convergence.
\\
To verify the behavior of these methods, we plot the values $\al_t = 1-\tfrac{f(x_{t+1}) - f^{\ast}}{f(x_{t}) - f^{\ast}}\leq 1$, which typically represent the per-iteration decrease rate from $f(x_{t+1}) - f^{\ast} \leq (1-\al_t) \ls f(x_{t}) - f^{\ast}\rs$. The bigger $\al_t$ means faster convergence. In the classical proofs of linear convergence, first, the first step is to show that there exists some  $\al\leq \al_t$ such that $f(x_{t+1}) - f^{\ast} \leq (1-\al) \ls f(x_{t}) - f^{\ast}\rs$. Next, we derive that $f(x_{T+1}) - f^{\ast} \leq (1-\al)^T \ls f(x_{0}) - f^{\ast}\rs \leq e^{-\al \cdot T} \ls f(x_{0}) - f^{\ast}\rs$. From the last inequality, we can estimate the number of iterations $T = O\ls\al \log\ls\tfrac{f(x_{0}) - f^{\ast}}{\varepsilon}\rs\rs$ to reach $\varepsilon$-solution, where $f(x_{T+1}) - f^{\ast}\leq \varepsilon$. For gradient descent with $M_1=L_1$ in \eqref{eq:gd}, $\al = \kappa = \tfrac{\mu}{\mu + L_1}$.
\begin{figure}[t]
\begin{subfigure}{0.48\textwidth}
    \includegraphics[width=0.98\linewidth]{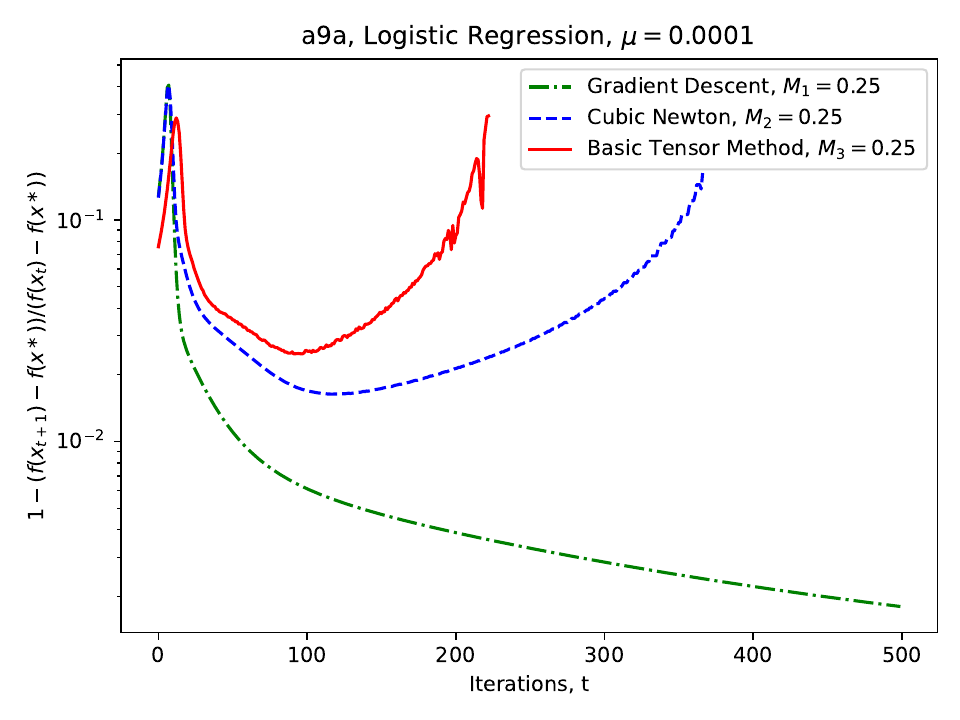}
    \vspace{-.2cm}
  \caption{Logistic Regression for \texttt{a9a} dataset from the starting point $x_0= 3e$, where the regularizer $\mu = 1e-4$ and $e$ is a vector of all ones.}
  \label{fig:a9a_example_rel}
\end{subfigure}
\hfill
\begin{subfigure}{0.48\textwidth}
  \includegraphics[width=0.98\linewidth]{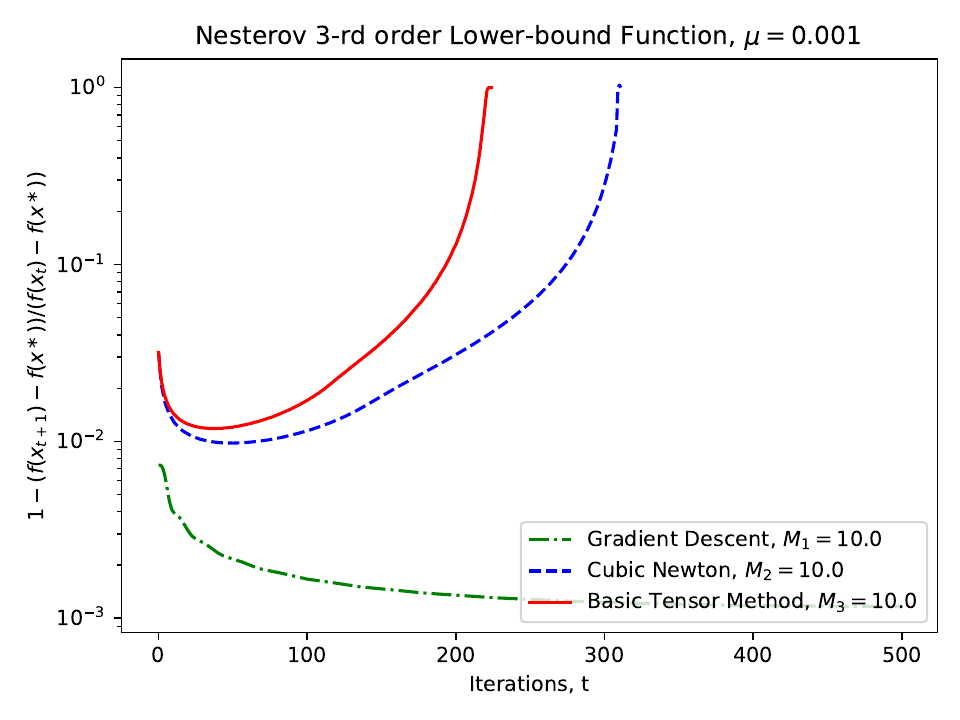}
  \vspace{-.2cm}
  \caption{Third order Nesterov's lower-bound function from the starting point $x_0= 0$, where the regularizer $\mu = 1e-3$.}
  \label{fig:nesterov_example_rel}
\end{subfigure}
\vspace{-.2cm}
    \caption{Comparison of the basic methods by the relative value $1-\tfrac{f(x_{t+1}) - f^{\ast}}{f(x_{t}) - f^{\ast}}$.}
    \vspace{-.5cm}
    \label{fig:nesterov+logreg_rel}
\end{figure}
\\
In Figure \ref{fig:nesterov+logreg_rel}, we observe that at the beginning all methods slow down for both cases because a smaller $\al_t = 1-\tfrac{f(x_{t+1}) - f^{\ast}}{f(x_{t}) - f^{\ast}}$ indicates slower convergence. This phase corresponds to the region where the sublinear rate outperforms the linear rate. For example, in the case of gradient descent, this occurs when the guarantee $f(x_{t+1}) \leq f(x_{t}) - \tfrac{1}{2L_1}\|\nabla f(x_{t+1})\|^2$ is better than $f(x_{t+1}) - f^{\ast} \leq \ls 1-\tfrac{\mu}{\mu+L_1}\rs \ls f(x_{t}) - f^{\ast}\rs$. As iterations proceed, gradient descent stabilizes around $\al_t = 10^{-3}$, which corresponds to the theoretical convergence rate $\kappa$. The Cubic Newton method and the Basic Tensor method, however, start to accelerate and switch to a superlinear convergence rate. This practical performance gives the intuition for the global superlinear convergence of high-order methods. 
\\
Now, we present the theoretical results demonstrating that basic high-order methods indeed have a global superlinear convergence for $\mu$-strongly star-convex functions. We start with a simplified version of the theorem which includes the linear convergence and then we present the full version.
\vspace{-.1cm}
\begin{theorem}
    For $\mu$-strongly star-convex \eqref{eq:star-convex-strong} function $f$ with $L_2$-Lipschitz-continuous Hessian \eqref{def_lipshitz}, Cubic Regularized Newton Method from \eqref{eq:cubic} with $M_2\geq L_2$ converges with the rate
    \begin{equation}
    \label{eq:linear_rate}
        f(x_{t+1}) - f^{\ast} \leq (1 - \al_t) \ls f(x_{t}) - f^{\ast} \rs,
    \end{equation}
    for all
    \vspace{-.3cm}
    \begin{equation}
       \al_t \in \lp 0; \al_t^{\ast} \rp, \text{ where }\al_t^{\ast} = \tfrac{-1 + \sqrt{1 + 4 \kappa_t}}{2\kappa_t} \quad \text{and} \quad \kappa_t = \tfrac{(M_2+L_2)\|x_t - x^{\ast}\|}{3 \mu}.
       \label{eq:al_true}
    \end{equation}
    This range includes the classical linear rate
    \vspace{-.1cm}
    \begin{equation}
    \label{eq:al_low}
        f(x_t) - f(x^{\ast}) \leq (1-\al^{low})^t \ls f(x_0) - f(x^{\ast})\rs \quad \text{for} \quad \al^{low} = \min \lb \tfrac{1}{2}; \sqrt{\tfrac{3\mu}{4(M_2+L_2)D}}\rb
        \vspace{-.1cm}
    \end{equation}
\end{theorem}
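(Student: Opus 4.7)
The plan is to derive a one-step contraction $f(x_{t+1})-f^\ast \le (1-\alpha)(f(x_t)-f^\ast)$ by evaluating the cubic model at a convex combination of $x_t$ and $x^\ast$ and then optimizing over the mixing parameter $\alpha \in [0,1]$. This is the standard strategy for CRN-type methods, but the star-convex setting requires one non-obvious adjustment to handle the Taylor model.

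First, since $x_{t+1}$ minimizes $\Omega_{x_t,M_2}$ and $M_2 \ge L_2$ makes $\Omega_{x_t,M_2}$ a global upper model of $f$, we have $f(x_{t+1}) \le \Omega_{x_t,M_2}(y)$ for every $y\in\bbE$. I would plug in the test point $y = \alpha x^\ast + (1-\alpha)x_t$, for which $\|y-x_t\| = \alpha\|x_t-x^\ast\|$. The key move is to apply the Lipschitz-Hessian bound \eqref{eq:upper-bound} in the \emph{reversed} direction, $\Phi_{x_t,2}(y) \le f(y) + \tfrac{L_2}{6}\|y-x_t\|^3$, which converts the Taylor model (whose sign structure is unknown) into the genuine function value $f(y)$. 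Adding the cubic regularizer gives $\Omega_{x_t,M_2}(y) \le f(y) + \tfrac{(L_2+M_2)\alpha^3}{6}\|x_t-x^\ast\|^3$.

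Next, I would apply strong star-convexity \eqref{eq:star-convex-strong} with $q=2$ to get $f(y) \le (1-\alpha)f(x_t) + \alpha f^\ast - \tfrac{\alpha(1-\alpha)\mu}{2}\|x_t-x^\ast\|^2$. Subtracting $f^\ast$ from the resulting bound on $f(x_{t+1})$ yields $f(x_{t+1})-f^\ast \le (1-\alpha)(f(x_t)-f^\ast) + \|x_t-x^\ast\|^2 B_t(\alpha)$, where $B_t(\alpha) := \tfrac{(L_2+M_2)\alpha^3\|x_t-x^\ast\|}{6} - \tfrac{\alpha(1-\alpha)\mu}{2}$. A short computation shows $B_t(\alpha)\le 0 \Longleftrightarrow \kappa_t\alpha^2 + \alpha - 1 \le 0$, whose positive root is precisely the $\alpha_t^\ast$ from the statement. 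Hence for every $\alpha\in(0,\alpha_t^\ast)$ the bracket drops from the bound and \eqref{eq:linear_rate} follows. For the uniform rate \eqref{eq:al_low}, I would use that CRN is a descent method (since $\Omega_{x_t,M_2}(x_{t+1}) \le \Omega_{x_t,M_2}(x_t) = f(x_t)$), so all iterates lie in $\mathcal{L}$ and $\|x_t-x^\ast\|\le D$. This caps $\kappa_t$ by $(L_2+M_2)D/(3\mu)$, and since $\alpha_t^\ast$ is decreasing in $\kappa_t$, it suffices to verify that the candidate $\alpha^{low}$ in \eqref{eq:al_low} satisfies $\tfrac{(L_2+M_2)D}{3\mu}(\alpha^{low})^2 + \alpha^{low} \le 1$; a case split on which term inside the $\min$ is active reproduces the two expressions, and iterating the one-step bound gives the claimed geometric rate.

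The main obstacle, as I see it, is the indefinite quadratic form hidden inside $\Phi_{x_t,2}$: strong star-convexity does \emph{not} imply $\nabla^2 f(x_t)\succeq 0$, so the classical identity that splits $\Phi_{x_t,2}(\alpha x^\ast + (1-\alpha)x_t)$ symmetrically across the segment leaves behind a $\tfrac{\alpha(1-\alpha)}{2}\langle \nabla^2 f(x_t)(x_t-x^\ast),x_t-x^\ast\rangle$ term of uncontrolled sign. Absorbing the whole second-order model into a true value $f(y)$ via the reversed Lipschitz-Hessian inequality sidesteps this issue entirely and reduces the whole argument to the single scalar quadratic inequality $\kappa_t\alpha^2+\alpha-1\le 0$, from which both parts of the theorem fall out.
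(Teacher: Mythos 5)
Your proposal is correct and follows essentially the same route as the paper: bound $f(x_{t+1})$ by $\Omega_{x_t,M_2}$ evaluated at the test point $\alpha x^\ast + (1-\alpha)x_t$, use the reversed Lipschitz-Hessian inequality to replace the Taylor model by $f(y) + \tfrac{L_2}{6}\|y-x_t\|^3$, invoke strong star-convexity, and reduce everything to the scalar quadratic $\kappa_t\alpha^2+\alpha-1\le 0$, with the descent property giving $\|x_t-x^\ast\|\le D$ for the uniform rate. The only cosmetic difference is that the paper packages the verification of $\alpha^{low}$ into a separate lemma on the root $\alpha^\ast(z)$, whereas you check the quadratic inequality directly; the content is identical.
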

\vspace{-.2cm}
\begin{proof}
We start the proof by using an upper-bound \eqref{eq:upper-bound}
\vspace{-.1cm}
\begin{gather}
    f(x_{t+1}) \stackrel{\eqref{eq:upper-bound}}{\leq} \Phi_{x_t,2}(x_{t+1})   + \tfrac{L_2}{ 6}\|x_{t+1} - x_{t}\|^3
    \stackrel{\eqref{eq:cubic}}{\leq}  \min \limits_{y \in \mathbb{R}^n} \left \{  \Phi_{x_{t},2}(y) + \tfrac{M_2}{ 6}\|y - x_{t}\|^3  \right \}\notag\\
    \stackrel{\eqref{eq:upper-bound}}{\leq}  \min \limits_{y \in \mathbb{R}^n} \left \{  f(y) + \tfrac{M_2+L_2}{6}\|y - x_{t}\|^3  \right \} 
    \stackrel{y=x_{t} + \al_{t}(x^{\ast} - x_{t})}{\leq}  f((1-\al_{t}) x_{t} + \al_{t} x^{\ast} ) + \al_{t}^3\tfrac{M_2+L_2}{6}\|x^{\ast} - x_{t}\|^3 \notag\\
    \stackrel{\eqref{eq:star-convex-strong}}{\leq}  (1-\al_{t})f(x_{t}) + \al_{t} f(x^{\ast}) - \tfrac{\al_{t} (1-\al_{t})\mu}{2}\|x_{t}-x^{\ast}\|^2 
    + \al_{t}^3\tfrac{M_2+L_2}{6}\|x_{t}-x^{\ast}\|^3.  \notag
\end{gather}
From the second inequality, we get that the method is monotone and $f(x_{t+1})\leq f(x_{t})$. Now, by subbing $f(x^{\ast})$ from both sides, we get
\begin{equation*}
    f(x_{t+1}) - f(x^{\ast}) \leq (1-\al_{t})\ls f(x_{t}) - f(x^{\ast})\rs
     - \tfrac{\al_{t}}{2}\|x_{t}-x^{\ast}\|^2 \ls (1-\al_{t})\mu 
    - \al_{t}^2\tfrac{M_2+L_2}{ 3}\|x_{t}-x^{\ast}\|  \rs,
    \vspace{-.1cm}
\end{equation*}
By choosing $\al_t$ such that
\begin{equation}
\label{eq:quadratic_in_cubic}
     \al_{t}^2\tfrac{M_2+L_2}{ 3}\|x_{t}-x^{\ast}\|+\mu\al_{t} -\mu\leq 0,
\end{equation}
we get \eqref{eq:linear_rate}.
By solving the quadratic inequality \eqref{eq:quadratic_in_cubic}, we get that the method \eqref{eq:Cubic_Newton} converges with the rate \eqref{eq:linear_rate} for all \eqref{eq:al_true}. Next, we present Lemma \ref{lem:al_z} with the useful properties of $\al_t^{\ast}$ from \eqref{eq:al_true}. The more general Lemma \ref{lem:al_z_tensor} with the detailed proof is in Appendix \ref{ap:superlinear}.
\vspace{-.3cm}
\begin{lemma}
\label{lem:al_z}
    For $z>0$, the function 
    \vspace{-.2cm}
    \begin{equation}
        \label{eq:al_z}
         \al^{\ast}(z) = \tfrac{-1 + \sqrt{1 + 4z}}{2z}
         \vspace{-.1cm}
    \end{equation}
has next constant lower and upper-bound 
\vspace{-.1cm}
    \begin{equation}
    \label{eq:al_lower}
        \min \lb 1, \tfrac{1}{\sqrt{z}}\rb > \al^{\ast}(z) > \min \lb \tfrac{1}{2}; \tfrac{1}{2\sqrt{z}}\rb,
        \vspace{-.1cm}
    \end{equation}
    and it is monotonically decreasing
    \vspace{-.1cm}
    \begin{equation}
    \label{eq:al_monotone}
      \forall z,y>0: \quad  z < y \quad \Rightarrow \quad \al^{\ast}(z) > \al^{\ast}(y).
      \vspace{-.1cm}
    \end{equation}
\end{lemma}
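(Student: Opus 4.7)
The plan is to exploit the fact that $\alpha^{\ast}(z)$ is precisely the positive root of the quadratic $g(\alpha) := z\alpha^2 + \alpha - 1 = 0$, so it satisfies the defining identity
\begin{equation*}
z\,\bigl(\alpha^{\ast}(z)\bigr)^2 + \alpha^{\ast}(z) = 1.
\end{equation*}
Working with this implicit characterization (rather than plugging the explicit formula into every inequality) will keep the algebra clean. Note that $g$ is strictly increasing on $[0,\infty)$, so for any trial value $\alpha_0 > 0$, the sign of $g(\alpha_0)$ determines on which side of $\alpha^{\ast}(z)$ the value $\alpha_0$ lies: $g(\alpha_0) < 0 \Rightarrow \alpha_0 < \alpha^{\ast}(z)$, and $g(\alpha_0) > 0 \Rightarrow \alpha_0 > \alpha^{\ast}(z)$. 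All three claims follow by instantiating this principle.

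For the upper bound \eqref{eq:al_lower}, I would rearrange the defining identity as $\alpha^{\ast}(z) = 1 - z\bigl(\alpha^{\ast}(z)\bigr)^2$. Positivity of $z(\alpha^{\ast})^2$ immediately gives $\alpha^{\ast}(z) < 1$, while dividing $z(\alpha^{\ast})^2 = 1 - \alpha^{\ast} < 1$ by $z$ yields $(\alpha^{\ast})^2 < 1/z$ and thus $\alpha^{\ast}(z) < 1/\sqrt{z}$. Combining gives $\alpha^{\ast}(z) < \min\{1, 1/\sqrt{z}\}$.

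For the lower bound, I would test the two candidate values. Evaluating $g(1/2) = z/4 - 1/2$ shows $g(1/2) < 0$ whenever $z < 2$, hence $\alpha^{\ast}(z) > 1/2$ for all $z \in (0,2)$ and in particular for $z \leq 1$, where $\min\{1/2, 1/(2\sqrt{z})\} = 1/2$. For $z > 1$, where the minimum equals $1/(2\sqrt{z})$, I would compute
\begin{equation*}
g\!\left(\tfrac{1}{2\sqrt{z}}\right) = z \cdot \tfrac{1}{4z} + \tfrac{1}{2\sqrt{z}} - 1 = \tfrac{1}{2\sqrt{z}} - \tfrac{3}{4} < 0,
\end{equation*}
since $z > 1$ gives $1/(2\sqrt{z}) < 1/2 < 3/4$. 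Both cases together give the strict lower bound in \eqref{eq:al_lower}.

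For monotonicity \eqref{eq:al_monotone}, the cleanest route is implicit differentiation of the identity $z(\alpha^{\ast})^2 + \alpha^{\ast} - 1 = 0$ with respect to $z$:
\begin{equation*}
(\alpha^{\ast})^2 + \bigl(2z\alpha^{\ast} + 1\bigr)\tfrac{d\alpha^{\ast}}{dz} = 0 \quad\Longrightarrow\quad \tfrac{d\alpha^{\ast}}{dz} = -\tfrac{(\alpha^{\ast})^2}{2z\alpha^{\ast} + 1} < 0,
\end{equation*}
which shows $\alpha^{\ast}$ is strictly decreasing in $z$ on $(0,\infty)$. I expect no real obstacle here; the only thing to be careful about is making the casework on $z \leq 1$ versus $z > 1$ match the $\min$ on the right-hand side of \eqref{eq:al_lower} so the strict inequality is clean at the boundary. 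The proof of the more general tensor-order statement in Lemma \ref{lem:al_z_tensor} will follow the same pattern with a higher-degree polynomial in place of $g$, where the same "sign of $g$ at a trial value" argument still applies because monotonicity of $g$ on $[0,\infty)$ is preserved.
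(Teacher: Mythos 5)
Your proof is correct and follows essentially the same route as the paper's: both work with the implicit characterization of $\al^{\ast}(z)$ as the positive root of $g(\al)=z\al^2+\al-1$ and use the sign of $g$ at trial values (the paper proves the general degree-$p$ version in Lemma \ref{lem:al_z_tensor} and specializes). The only cosmetic difference is the monotonicity step, where you use implicit differentiation while the paper substitutes $\al^{\ast}(z)$ into $h_y$ and reads off the sign of $\al^{\ast}(z)^p(y-z)$; both are valid and equally elementary.
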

\vspace{-.1cm}
The convergence rate is well-defined as
$0<\al_t^{\ast}\leq 1$ from \eqref{eq:al_lower}.  As $\|x_t - x^{\ast}\|\leq D$ from \eqref{eq:D_distance} and $\al^{low} \leq \al^{\ast}$ by \eqref{eq:al_lower}, we get the linear convergence rate \eqref{eq:al_low}.
\end{proof}
\vspace{-.2cm}
Now, we move to the second Theorem and prove the global superlinear convergence. The main idea of the proof is to notice that $\|x_t-x^{\ast}\|$ in \eqref{eq:al_true} is decreasing for $\mu$-strongly star-convex functions and hence we can exploit it and show that $\kappa_t$ is decreasing and hence $\al_t^{\ast}$ is increasing from \eqref{eq:al_monotone}, which gives us a superlinear convergence.
\vspace{-.1cm}
\begin{theorem}
    For $\mu$-strongly star-convex \eqref{eq:star-convex-strong} function $f$ with $L_2$-Lipschitz-continuous Hessian \eqref{def_lipshitz}, Cubic Regularized Newton Method from \eqref{eq:cubic} with $M_2\geq L_2$ converges globally superlinearly as defined in \eqref{eq:rate_superlinear} with $\zeta_t = 1-\al_t^{sl}$
    \begin{equation}
        \label{eq:superlinear_convergence}
        f(x_{t+1}) - f^{\ast} \leq (1 - \al_t^{sl}) \ls f(x_{t}) - f^{\ast} \rs,
    \end{equation}
    where 
    \vspace{-.3cm}
    \begin{equation}
    \label{eq:al_superlinear}
        \al_t^{sl} = \tfrac{-1 + \sqrt{1 + 4 \kappa_t^{sl}}}{2\kappa_t^{sl}} \quad \text{for} \quad \kappa_t^{sl} = \tfrac{(M_2+L_2)\sqrt{2}}{3 \mu^{3/2}} (1-\al^{low})^{t/2}  \ls f(x_{0}) - f(x^{\ast})\rs^{1/2}.
    \end{equation}
    The aggregated convergence rate for $T\geq 1$ equals to 
    \begin{equation}
    \label{eq:aggregated_superlinear}
    f(x_{T})-f(x^{\ast})  \leq (f(x_{0}) - f(x^{\ast})) \textstyle{\prod}_{t=1}^{T} (1-\al^{sl}_t).
\end{equation}
\end{theorem}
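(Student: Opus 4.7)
The plan is to bootstrap from the per-iteration bound already established in the first theorem. Recall that CRN satisfies $f(x_{t+1}) - f^{\ast} \leq (1-\alpha_t)(f(x_t)-f^{\ast})$ for any $\alpha_t \in (0, \alpha_t^{\ast})$ with $\alpha_t^{\ast}$ determined by $\kappa_t = \frac{(M_2+L_2)\|x_t-x^{\ast}\|}{3\mu}$. The whole issue is that $\kappa_t$ depends on $\|x_t - x^{\ast}\|$, which was crudely bounded by $D$ to get the linear rate; the key observation is that strong star-convexity forces this distance to \emph{shrink} along the trajectory, so $\kappa_t$ shrinks and (by the monotonicity in Lemma \ref{lem:al_z}) the admissible step $\alpha_t^{\ast}$ \emph{grows}, producing acceleration.

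First I would apply the $q=2$ growth condition \eqref{eq:strong-growth}, which gives $\|x_t - x^{\ast}\| \leq \sqrt{2(f(x_t)-f^{\ast})/\mu}$. Next I would plug in the already-proven linear bound \eqref{eq:al_low}, namely $f(x_t)-f^{\ast} \leq (1-\alpha^{low})^{t}(f(x_0)-f^{\ast})$, to obtain
\begin{equation*}
\|x_t - x^{\ast}\| \leq \sqrt{\tfrac{2}{\mu}}\,(1-\alpha^{low})^{t/2}\,(f(x_0)-f^{\ast})^{1/2}.
\end{equation*}
Substituting this into the definition of $\kappa_t$ yields exactly the quantity $\kappa_t^{sl}$ in \eqref{eq:al_superlinear}, so setting $\alpha_t = \alpha_t^{sl}$ (the value of $\alpha^{\ast}(\cdot)$ at $\kappa_t^{sl}$) is admissible in the per-iteration contraction, giving \eqref{eq:superlinear_convergence}.

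To verify this is actually superlinear in the sense of \eqref{eq:rate_superlinear}, I would appeal to Lemma \ref{lem:al_z}: since $\kappa_t^{sl}$ is strictly decreasing in $t$ (the factor $(1-\alpha^{low})^{t/2}\to 0$), the monotonicity property \eqref{eq:al_monotone} gives $\alpha_t^{sl} < \alpha_{t+1}^{sl}$, hence $\zeta_t = 1-\alpha_t^{sl}$ strictly decreases. The lower bound in \eqref{eq:al_lower} gives $\alpha_t^{sl} \geq \min\{1/2, 1/(2\sqrt{\kappa_t^{sl}})\}$, and since $\kappa_t^{sl}\to 0$ the second branch eventually dominates and drives $\alpha_t^{sl}\to 1$, i.e., $\zeta_t \to 0$.

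Finally, the aggregated bound \eqref{eq:aggregated_superlinear} is just the telescoped product of the one-step contractions: iterating \eqref{eq:superlinear_convergence} from $t=0$ to $T-1$ gives the claimed $\prod_{t=1}^{T}(1-\alpha_t^{sl})$ factor. The only subtle point I anticipate is being careful that the shrinking bound on $\|x_t - x^{\ast}\|$ uses the previously-established linear rate and not the superlinear one I am currently trying to prove — this avoids circularity, since the linear rate from the first theorem is unconditional once $D$ is finite. I do not expect any serious obstacle beyond cleanly bookkeeping this bootstrap argument.
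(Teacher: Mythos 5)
Your argument is exactly the paper's: bound $\|x_t-x^{\ast}\|$ via the quadratic growth condition, feed in the already-established linear rate to get the decaying $\kappa_t^{sl}\geq\kappa_t$, and invoke the monotonicity of $\al^{\ast}(\cdot)$ to conclude that the contraction factors improve each iteration; the bootstrap is not circular for precisely the reason you state. One small correction: the bound \eqref{eq:al_lower} only yields $\al_t^{sl}\geq\tfrac12$ once $\kappa_t^{sl}\leq 1$ (the minimum is attained by the $\tfrac12$ branch there, not the $\tfrac{1}{2\sqrt{z}}$ branch), so it does not by itself give $\zeta_t\to 0$; to get $\al^{\ast}(z)\to 1$ as $z\to 0$ you should instead use the refined lower bound $\al^{\ast}(z)\geq 1-z$ from \eqref{eq:al_lower_sup} in Lemma \ref{lem:al_z_tensor}, or simply expand the explicit formula $\al^{\ast}(z)=\tfrac{-1+\sqrt{1+4z}}{2z}$ near $z=0$.
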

\begin{proof}
From $\mu$-strongly star-convexity \eqref{eq:star-convex-strong}, we can upper-bound $\|x_t-x^{\ast}\|$ in \eqref{eq:al_true} by
\begin{gather*}
    \|x_t - x^{\ast}\| \leq \ls\tfrac{2}{\mu}\ls f(x_t) - f(x^{\ast})\rs\rs^{1/2} \stackrel{\eqref{eq:al_low}}{\leq} \ls\tfrac{2}{\mu}\ls (1-\al^{low})^{t} ( f(x_{0}) - f(x^{\ast}))\rs\rs^{1/2}.
\end{gather*}
So, we got that $\|x_t - x^{\ast}\|$ is linearly decreasing to zero.
From that, we get a new superlinear $\al_t^{sl}\leq \al_t^{\ast}$ from \eqref{eq:al_superlinear}.
As $\kappa_t^{sl}$ is getting smaller within each iteration $\kappa_t^{sl}>\kappa_{t+1}^{sl}$, we get that $\al(\kappa_t^{sl}) < \al(\kappa_{t+1}^{sl})$ from \eqref{eq:al_monotone}. Finally, for $\zeta_t =  1 - \al(\kappa_t^{sl})$, we get $\zeta_t > \zeta_{t+1}$ in \eqref{eq:superlinear_convergence}. This finishes the proof of global superlinear convergence. The aggregated convergence rate is equal to \eqref{eq:aggregated_superlinear}.
\end{proof}

Similar results hold for Basic Tensor methods from \eqref{eq:model_step} in general for $p \geq 2$. Next, we present the theorem for global superlinear convergence of Basic Tensor methods.

\begin{theorem}
\label{thm:superlinear_tensor}
    For $\mu_q$-uniformly star-convex \eqref{eq:star-convex-strong} function $f$ of degree $q\geq 2$ with $L_p$ - Lipschitz-continuous $p$-th derivative ($p\geq q \geq 2$) \eqref{def_lipshitz}, Basic Tensor Method from \eqref{eq:model_step} with $M_p\geq p L_p$ converges converges globally superlinearly as defined in \eqref{eq:rate_superlinear} with $\zeta_{t,p} = 1-\al_{t,p}^{sl}$
    \begin{equation}
        \label{eq:superlinear_convergence_tensor_}
        f(x_{t+1}) - f^{\ast} \leq (1 - \al_{t,p}^{sl}) \ls f(x_{t}) - f^{\ast} \rs,
    \end{equation}
    where $\al_{t,p}^{sl}$ is such that
    \begin{equation}
    \label{eq:al_superlinear_tensor_}
    \begin{gathered}
         h_{\kappa_{t,p}^{sl}}(\al_{t,p}^{sl})= 0, \quad \text{where} \quad h_{\kappa}(\al) = \al^p \kappa + \al - 1, \quad \al^{low}_{p} = \min \lb \tfrac{1}{2}; \tfrac{1}{2}\ls\tfrac{(p+1)! \mu}{q(M_p+L_p)D^{p-q+1}}\rs^{1/p}\rb \\
          \text{and} \quad \kappa_{t,p}^{sl} = \tfrac{(M_p+L_p)q^{(q+1)/q}}{(p+1)! \mu^{(q+1)/q}} (1-\al^{low}_{p})^{t/q}  \ls f(x_{0}) - f(x^{\ast})\rs^{1/q}.
        \end{gathered}
    \end{equation}
    The aggregated convergence rate for $T\geq 1$ equals to 
    \begin{equation}
    \label{eq:aggregated_superlinear_tensor_}
    f(x_{T})-f(x^{\ast})  \leq (f(x_{0}) - f(x^{\ast})) \textstyle{\prod}_{t=1}^{T} (1-\al^{sl}_{t,p}).
\end{equation}
\end{theorem}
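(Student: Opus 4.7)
The plan is to mirror the $p=2$ proof, replacing the Hessian-Lipschitz upper bound by the general $L_p$-Lipschitz bound \eqref{eq:upper-bound} and $\mu$-strong star-convexity by the $\mu_q$-uniform version \eqref{eq:star-convex-strong}. Chaining the minimizing property of the tensor step \eqref{eq:model_step} with \eqref{eq:upper-bound} applied on both sides yields
\begin{equation*}
f(x_{t+1}) \;\leq\; \min_{y \in \bbE} \lb f(y) + \tfrac{M_p+L_p}{(p+1)!}\|y-x_t\|^{p+1} \rb.
\end{equation*}
Probing with $y = (1-\alpha)x_t + \alpha x^{\ast}$ for $\alpha \in [0,1]$, applying \eqref{eq:star-convex-strong} to the outer $f$, and subtracting $f^{\ast}$ reduces the one-step contraction $f(x_{t+1}) - f^{\ast} \leq (1-\alpha)\ls f(x_t) - f^{\ast}\rs$ to the polynomial inequality $h_{\kappa_{t,p}}(\alpha) \leq 0$, where $h_\kappa$ is the polynomial from \eqref{eq:al_superlinear_tensor_} and $\kappa_{t,p} = \tfrac{q(M_p+L_p)}{(p+1)!\,\mu_q}\|x_t-x^{\ast}\|^{p-q+1}$.

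The next step is a generalized version of Lemma \ref{lem:al_z} which asserts that $h_\kappa(\alpha) = \kappa\alpha^p + \alpha - 1$ has a unique root $\alpha^{\ast}(\kappa) \in (0,1]$, that $\alpha^{\ast}$ is strictly decreasing in $\kappa$, and that $\alpha^{\ast}(\kappa) \geq \min \lb \tfrac{1}{2}, \tfrac{1}{2\kappa^{1/p}} \rb$. Inserting the diameter bound $\|x_t-x^{\ast}\| \leq D$ into $\kappa_{t,p}$ and applying this lower bound produces the uniform constant $\alpha^{low}_p$ stated in \eqref{eq:al_superlinear_tensor_}, which in turn delivers the linear decay $f(x_t) - f^{\ast} \leq (1-\alpha^{low}_p)^t \ls f(x_0)-f^{\ast}\rs$. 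To upgrade this to the superlinear rate, invoke the growth condition \eqref{eq:strong-growth}, $\|x_t - x^{\ast}\|^q \leq \tfrac{q}{\mu_q}\ls f(x_t) - f^{\ast}\rs$, and plug in the just-obtained linear decay. This yields a geometrically shrinking a-priori bound on $\|x_t-x^{\ast}\|^{p-q+1}$, and hence on $\kappa_{t,p}$, that matches $\kappa^{sl}_{t,p}$ from \eqref{eq:al_superlinear_tensor_}. Since $\kappa^{sl}_{t,p}$ strictly decreases with $t$, monotonicity of $\alpha^{\ast}$ forces $\alpha^{sl}_{t,p} = \alpha^{\ast}(\kappa^{sl}_{t,p})$ to strictly increase, so $\zeta_{t,p} = 1-\alpha^{sl}_{t,p}$ decreases to zero, matching Definition \eqref{eq:rate_superlinear}; telescoping the per-step contractions then yields \eqref{eq:aggregated_superlinear_tensor_}.

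The main obstacle is the generalized lemma, because for $p=2$ the root $\alpha^{\ast}(\kappa)$ has a closed form, whereas for arbitrary $p$ one must reason implicitly. Existence and uniqueness of the root follow from strict monotonicity of $\alpha \mapsto h_\kappa(\alpha)$ on $[0,\infty)$ together with $h_\kappa(0) = -1$ and $h_\kappa(1) = \kappa \geq 0$; monotonicity of $\alpha^{\ast}$ in $\kappa$ is a one-line implicit-function computation, or equivalently follows from the fact that $\kappa \mapsto h_\kappa(\alpha)$ is strictly increasing for every fixed $\alpha \in (0,1)$; and the explicit lower bound is verified casewise by evaluating $h_\kappa$ at $\alpha = 1/2$ when $\kappa \leq 1$ (giving $h_\kappa(1/2) = \kappa/2^p + 1/2 - 1 \leq 1/2^p + 1/2 - 1 \leq 0$ for $p\geq 1$) and at $\alpha = 1/(2\kappa^{1/p})$ when $\kappa > 1$ (giving $h_\kappa(\alpha) = 1/2^p + 1/(2\kappa^{1/p}) - 1 \leq 0$). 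Everything else is algebraic bookkeeping of the powers of $q$, $\mu_q$, $M_p+L_p$, and $(p+1)!$ needed to match the constants in \eqref{eq:al_superlinear_tensor_}.
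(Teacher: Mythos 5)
Your proposal is correct and follows essentially the same route as the paper: the same chaining of \eqref{eq:upper-bound} with the minimizing property of \eqref{eq:model_step}, the same probe point $y=(1-\alpha)x_t+\alpha x^{\ast}$ reducing to $h_{\kappa}(\alpha)\leq 0$, the same generalized lemma (existence, uniqueness, monotonicity in $\kappa$, and the casewise lower bound at $\alpha=1/2$ and $\alpha=1/(2\kappa^{1/p})$, which is exactly the paper's Lemma \ref{lem:al_z_tensor}), and the same upgrade to superlinearity via the growth condition \eqref{eq:strong-growth} combined with the linear decay. The only caveat is the final "algebraic bookkeeping": plugging the decay of $\|x_t-x^{\ast}\|$ into the exponent $p-q+1$ gives powers $(p-q+1)/q$ rather than the $1/q$ appearing in \eqref{eq:al_superlinear_tensor_}, but this discrepancy is present in the paper's own derivation as well and does not reflect a gap in your argument.
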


To sum up, we showed the global superlinear convergence of Cubic Regularized Newton Method for $\mu$-strongly star-convex functions and Basic Tensor methods for $\mu_q$-uniformly star-convex functions. More details and proofs are presented in the Appendix \ref{ap:superlinear}.

\section{Nesterov Accelerated Tensor Method with $A_t$-Adaptation (NATA)}
In this section, we focus on Nesterov Acceleration for tensor methods proposed in \citep{nesterov2021implementable,nesterov2021superfast}. Theoretically, the Nesterov Accelerated Tensor Method (NATM), with a covergence rate $O\ls T^{-(p+1)}\rs$, dominates Basic Tensor Method with the rate $O\ls T^{-p}\rs$. However, in practice, it is often observed that the NATM method is slower, particularly in the initial stages \citep{scieur2023adaptive,carmon2022optimal,antonakopoulos2022extra}. To illustrate this, we present a practical example using the logistic regression problem \eqref{eq:logreg_main} with $\mu=0$.
In Figure \ref{fig:a9a_accelerated_nesterov}, the accelerated versions appear slower, which contradicts the theoretical expectations. In this section, we investigate the causes of this underperformance and propose a solution. We begin by revisiting the classical Nesterov Accelerated method, with further details provided in Appendix \ref{app:Nesterov}.
 \begin{algorithm}[H]
  \caption{Nesterov Accelerated Tensor Method}\label{alg:Superfast}
 \begin{algorithmic}[1]
      \STATE \textbf{Input:} $x_0=v_0$ is starting point, constant $M_p$, $\psi_0(z) = \tfrac{1}{p+1}\|z-x_0\|^{p+1}$, total number of iterations $T$,  and sequence $A_t$.
    \FOR{$t \geq 0$}
    \STATE $a_{t+1} = A_{t+1} - A_t$
    \STATE $y_t = \tfrac{A_t}{A_{t+1}} x_t + \tfrac{a_{t+1}}{A_{t+1}}v_t$
    \STATE $x_{t+1} = \argmin_{y\in \bbE} \lb \Omega_{y_t,M_p}(y)\rb$
    \STATE $\psi_{t+1}(z) = \psi_t (z) + a_{t+1}[ f (x_{t+1}) + \la \nabla f(x_{t+1}), z - x_{t+1} \ra]$
    \STATE $v_{t+1} = \argmin_{x \in \mathbb{E}} \psi_{t+1} (z)$
    \ENDFOR
    \RETURN{$x_{T+1}$}
    \end{algorithmic}
\end{algorithm}
According to the theoretical convergence result  $f(x_{t}) - f(x^{\ast}) \leq \tfrac{\|x^{\ast} - x_0\|^{p+1}}{(p+1)A_t}$ from \citep[Theorem 2.3]{nesterov2021superfast}, the sequence $A_t$ is directly connected with the method's performance - the larger the $A_t$, the faster the convergence.  Therefore, our goal is to maximize $A_t$. Theoretically, $A_t$ should be defined as $A_t = \tfrac{\nu_p}{L_p} t^{p+1},$ where $\nu_2 = \tfrac{1}{24}$ for $M_2=L_2$ and $\nu_3 = \tfrac{5}{3024}$ for $M_3=6L_3$. However, the values of $\nu_p$ appear to be quite small, which limits the speed of convergence. Can these values be increased? The answer is yes.
We propose the Nesterov Accelerated Tensor Method with $A_t$-Adaptation, which selects these parameters more aggressively, leading to faster convergence.
 \begin{algorithm}[H]
  \caption{Nesterov Accelerated Tensor Method with $A_t$-Adaptation (NATA)} \label{alg:NATA}
 \begin{algorithmic}[1]
      \STATE \textbf{Input:} $x_0=v_0$ is starting point, $\psi_0(z) = \tfrac{1}{p+1}\|z-x_0\|^{p+1}$, constant $M_p$, total number of iterations $T$,   $\tilde{A}_0=0$, $\nu^{\min}=\nu_p$, $\nu^{\max} \geq \nu_p$ is a maximal value of $\nu$, $\theta>1$ is a scaling parameter for $\nu$, and $\nu_0 \leq \nu^{\max}$ is a starting value of $\nu$.
    \FOR{$t \geq 0$}
        \STATE $\nu_t = \nu_t \theta$
        \REPEAT 
        \STATE $\nu_t = \max\lb\tfrac{\nu_t}{\theta},\nu^{\min}\rb$
        \STATE $\tilde{a}_{t+1} = \tfrac{\nu_t}{M_p} ((t+1)^{p+1} - t^{p+1})$ and $\tilde{A}_{t+1} = \tilde{A}_{t} + \tilde{a}_{t+1}$
        \STATE $y_t = \tfrac{\tilde{A}_t}{\tilde{A}_{t+1}} x_t + \tfrac{\tilde{a}_{t+1}}{\tilde{A}_{t+1}}v_t$
        \STATE $x_{t+1} = \argmin_{y\in \bbE} \lb \Omega_{y_t,M_p}(y)\rb$
        \STATE $\psi_{t+1}(z) = \psi_t (z) + \tilde{a}_{t+1}[ f (x_{t+1}) + \la \nabla f(x_{t+1}), z - x_{t+1}\ra]$
        \STATE $v_{t+1} = \argmin_{z \in \mathbb{E}} \psi_{t+1} (z)$
        \UNTIL{$\psi_{t+1}(v_{t+1}) < \tilde{A}_{t+1} f(x_{t+1})$}
        \STATE $\nu_{t+1} = \min\lb \nu_t\theta, \nu^{\max}\rb$
    \ENDFOR
    \RETURN{$x_{T+1}$}
    \end{algorithmic}
\end{algorithm}

\begin{theorem}
\label{thm:NATA}
    For convex function $f$ with $L_p$-Lipschitz-continuous $p$-th derivative, to find $x_T$ such that $f(x_{T}) - f(x^{\ast})\leq \varepsilon$, it suffices to perform no more than $T\geq 1$ iterations of the Nesterov Accelerated Tensor Method with $A_t$-Adaptation (NATA) with $M_p\geq p L_p$ (Algorithm \ref{alg:NATA}), where
    \begin{equation}
        T = O\ls \ls\tfrac{M_p R^{p+1}}{\varepsilon}\rs^{\tfrac{1}{p+1}} + \log_{\theta}\ls\tfrac{\nu^{\max}}{\nu^{\min}}\rs \rs.
    \end{equation}
\end{theorem}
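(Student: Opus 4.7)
The plan is to extend Nesterov's estimating-sequence argument for the classical accelerated tensor method to accommodate the adaptive $\tilde A_t$ schedule driven by the varying $\nu_t$. The central invariant I would establish by induction on $t$ is $\tilde A_t f(x_t) \leq \psi_t(v_t)$. The base case is immediate since $\tilde A_0 = 0$ and $\psi_0 \geq 0$. For the inductive step, observe that this inequality at level $t+1$ is exactly the condition enforced by the REPEAT-UNTIL block of Algorithm \ref{alg:NATA}, so it suffices to show that the inner loop always terminates. Termination follows because once $\nu_t$ has been divided down to $\nu^{\min} = \nu_p$, the resulting increment $\tilde a_{t+1} = \nu_p M_p^{-1}((t+1)^{p+1} - t^{p+1})$ coincides with the schedule used in the original Nesterov Accelerated Tensor Method (Algorithm \ref{alg:Superfast}), for which the one-step guarantee $\psi_{t+1}(v_{t+1}) \geq \tilde A_{t+1} f(x_{t+1})$ is known to hold.

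With the invariant in hand, the function-gap bound is standard. By convexity, each linear term added to $\psi_t$ satisfies $f(x_{k+1}) + \langle \nabla f(x_{k+1}), z - x_{k+1}\rangle \leq f(z)$, so telescoping gives $\psi_t(z) \leq \tilde A_t f(z) + \tfrac{1}{p+1}\|z - x_0\|^{p+1}$. Plugging $z = x^{\ast}$ and using $\psi_t(v_t) \leq \psi_t(x^{\ast})$ together with the invariant produces
\begin{equation*}
f(x_t) - f(x^{\ast}) \leq \frac{R^{p+1}}{(p+1)\tilde A_t}.
\end{equation*}
Since $\nu_t \geq \nu^{\min} = \nu_p$ is maintained throughout, telescoping $\tilde a_{t+1} = \nu_t M_p^{-1}((t+1)^{p+1} - t^{p+1})$ yields $\tilde A_{T_o} \geq \nu_p T_o^{p+1}/M_p$ after $T_o$ outer iterations, so $T_o = O\bigl((M_p R^{p+1}/\varepsilon)^{1/(p+1)}\bigr)$ outer iterations suffice for an $\varepsilon$-solution.

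To convert $T_o$ into the total iteration count $T$ stated in the theorem, I would bound the cumulative work of the REPEAT loops by a potential argument on $\phi_t := \log_\theta(\nu_t / \nu^{\min}) \in [0,\, \log_\theta(\nu^{\max}/\nu^{\min})]$. Each outer step increases $\phi$ by at most a constant (the $\nu_t \leftarrow \nu_t \theta$ updates in the algorithm), while each pass through the REPEAT body decreases $\phi$ by $1$ until the floor $\phi = 0$ is reached, at which point at most one additional pass occurs before the invariant is certified. Summing over all outer steps yields a total number of REPEAT passes bounded by $O(T_o) + \log_\theta(\nu^{\max}/\nu^{\min})$, and combining with the bound on $T_o$ produces the claimed $T = O\bigl((M_p R^{p+1}/\varepsilon)^{1/(p+1)} + \log_\theta(\nu^{\max}/\nu^{\min})\bigr)$. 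The main obstacle is the termination argument for the inner loop: the whole proof rests on correctly invoking the classical NATM one-step descent precisely at the value $\nu_t = \nu_p$, which requires faithfully reproducing the technical $y_t$-to-$x_{t+1}$ estimate of Nesterov's accelerated tensor analysis; everything else is either standard estimating-sequence manipulation or telescoping bookkeeping.
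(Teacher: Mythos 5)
Your proposal is correct and follows essentially the same route as the paper's proof: the same estimating-sequence invariant $\tilde A_t f(x_t)\leq\psi_t(v_t)$, the same termination argument for the inner loop (at $\nu_t=\nu_p$ the increment $\tilde a_{t+1}$ equals the theoretical $a_{t+1}$, and since $\tilde A_{t+1}\geq A_{t+1}$ the classical inequality $A_{t+1}c_p\geq\tfrac{p}{p+1}2^{(p-1)/p}a_{t+1}^{(p+1)/p}$ suffices), and the same amortized $O(T)+\log_\theta(\nu^{\max}/\nu^{\min})$ accounting of the REPEAT passes. The only point to state explicitly is the monotonicity remark just given — the guarantee does not literally "coincide" with the classical run, but the larger accumulated $\tilde A_{t+1}$ only helps.
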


The proof is presented in the Appendix \ref{app:NATA}. Next, we demonstrate the practical improvements of NATA compared to the classical methods. As shown in Figure \ref{fig:a9a_cubic_nata}, one can see that the Cubic and Tensor variants of NATA significantly outperform the classical Basic and Nesterov Accelerated Methods. We also included a version of Cubic NATA with fixed $\nu_t=10$, where $\nu_t$ is a tunable hyperparameter. This more aggressive variant of NATA can exhibit even faster practical performance, though it may diverge if $\nu_t$ is not chosen carefully.
\vspace{-.3cm}
\begin{figure}[H]
\begin{subfigure}{0.48\textwidth}
    \includegraphics[width=0.98\linewidth]{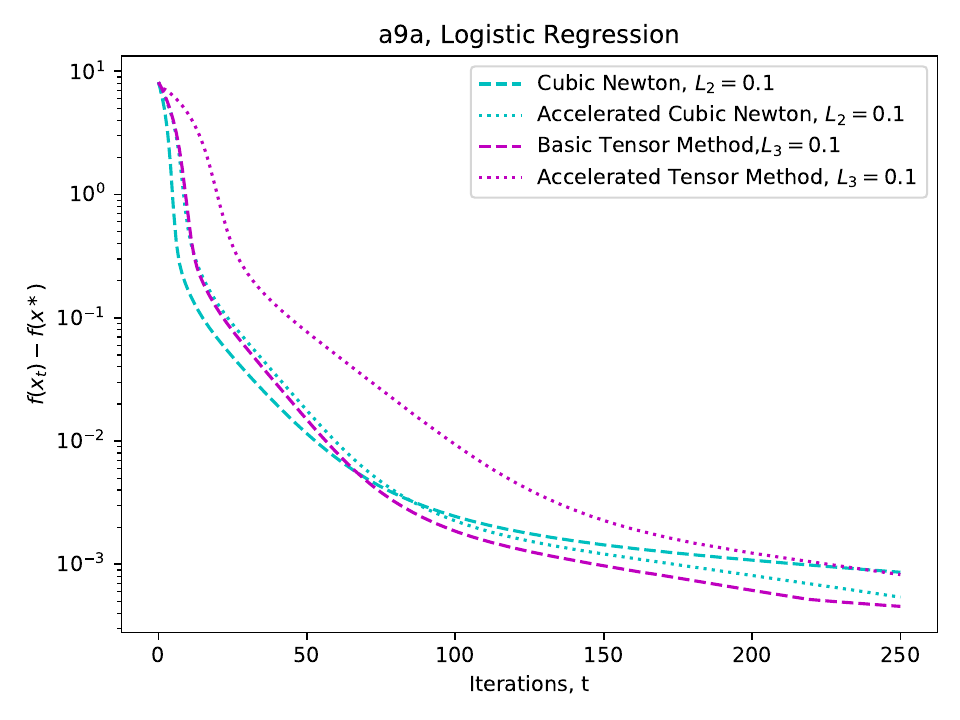}
    \vspace{-.2cm}
    \caption{Comparison of Basic Methods vs Nesterov Accelerated Methods}
  \label{fig:a9a_accelerated_nesterov}
\end{subfigure}
\hfill
\begin{subfigure}{0.48\textwidth}
  \includegraphics[width=0.98\linewidth]{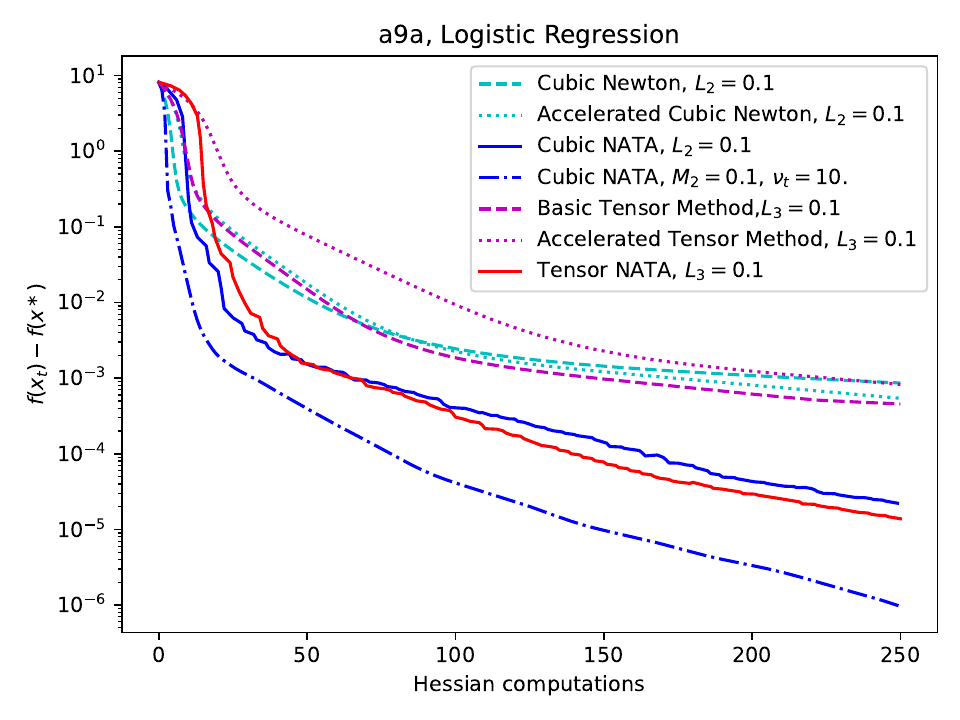}
  \vspace{-.2cm}
  \caption{Comparison of classical Basic and Nesterov Accelerated Methods vs new NATA Methods}
  \label{fig:a9a_cubic_nata}
\end{subfigure}
\vspace{-.2cm}
    \caption{Comparison of methods on Logistic Regression for \texttt{a9a} dataset from the starting point $x_0= 3e$, where $e$ is a vector of all ones.}
    \vspace{-.5cm}
    \label{fig:nesterov+nata}
\end{figure}

\section{Computational Comparison of Acceleration Methods}
\vspace{-.3cm}
In this section, we present the practical comparison of different acceleration techniques for tensor methods for convex optimization. We choose five main variants: Nesterov Accelerated Tensor Method (Algorithm \ref{alg:Superfast}) with a rate $O(T^{-(p+1)})$ \citep{nesterov2021implementable}; Near-Optimal Tensor Acceleration (Algorithm \ref{alg:Hyperfast}) with a rate $\tilde{O}(T^{-(3p+1)/2})$ \citep{bubeck2019near,gasnikov2019near,kamzolov2020near}; Near-Optimal Proximal-Point Acceleration Method with Segment Search (Algorithm \ref{alg:PPSS}) with the rate $\tilde{O}(T^{-(3p+1)/2})$ \citep{nesterov2021auxiliary}; 
and Optimal Acceleration (Algorithm \ref{alg:Optimal}) with a rate $O(T^{-(3p+1)/2})$ \citep{kovalev2022first}, where $\tilde{O}(\cdot)$ means up to a logarithmic factor. Next, we present experiments for logistic regression \eqref{eq:logreg_main} on the a9a dataset in Figure \ref{fig:a9a_near_optimal} and on the w8a dataset in Figure \ref{fig:w8a_near_optimal}.
\vspace{-.3cm}
\begin{figure}[H]
\begin{subfigure}{0.5\textwidth}
    \includegraphics[width=0.98\linewidth]{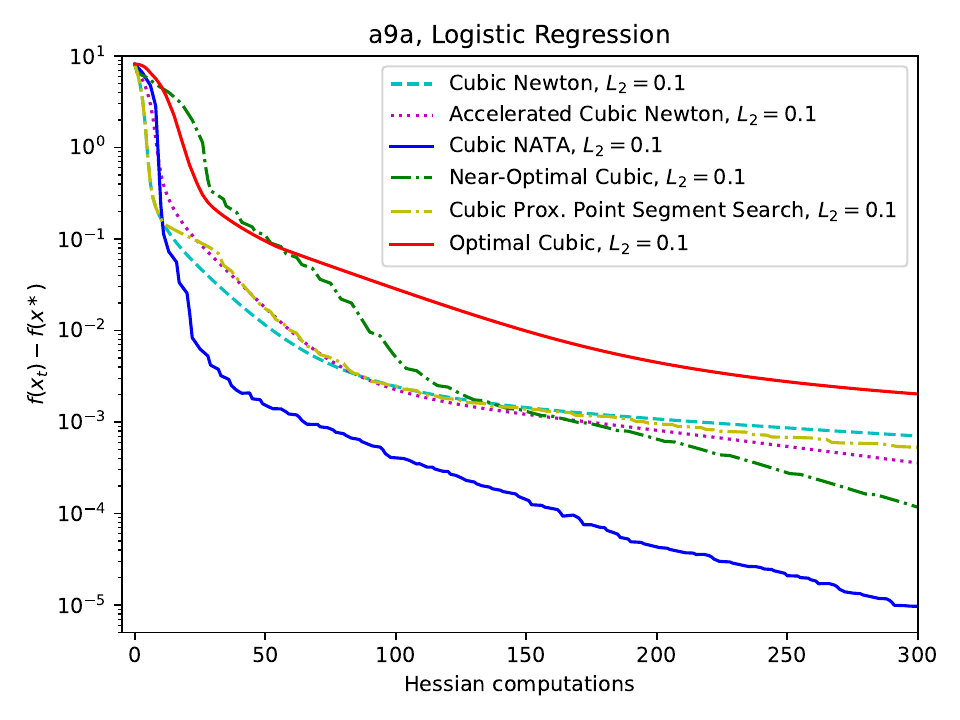}
      \label{fig:a9a_cubic_near_optimal}
\end{subfigure}
\hfill
\begin{subfigure}{0.5\textwidth}
  \includegraphics[width=0.98\linewidth]{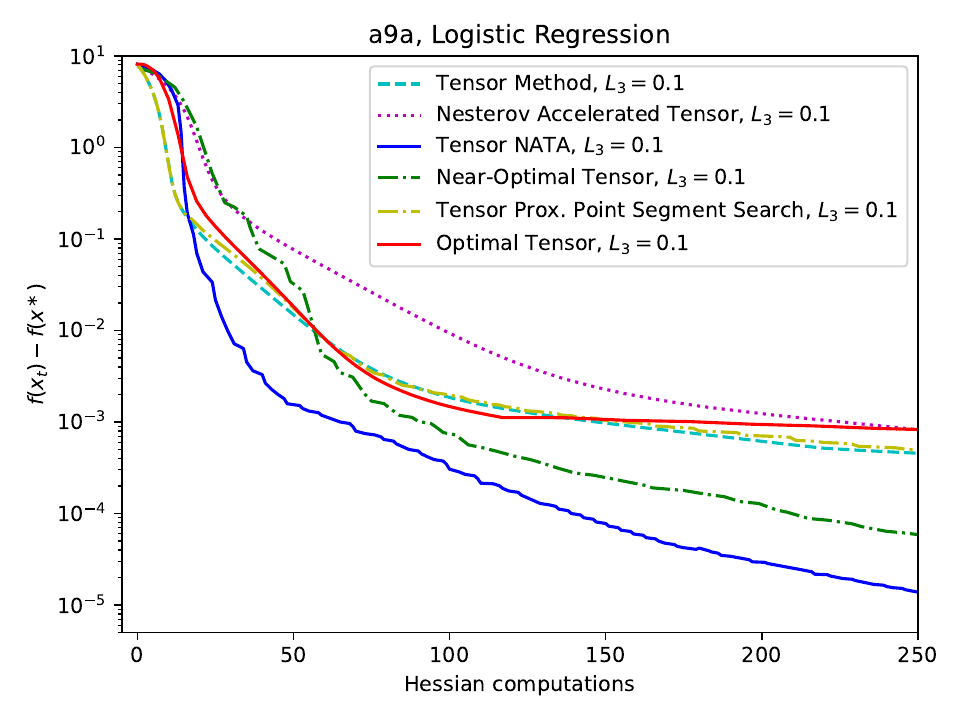}
  \label{fig:a9a_tensor_near_optimal}
\end{subfigure}
\vspace{-.3cm}
    \caption{Comparison of different cubic and tensor acceleration methods on Logistic Regression for \texttt{a9a} dataset from the starting point $x_0= 3e$, where $e$ is a vector of all ones.}
    \vspace{-.5cm}
    \label{fig:a9a_near_optimal}
\end{figure}
Let us now discuss the performance of the methods. The new NATA acceleration outperforms all other methods. We attribute this to NATA's strategy of maximizing $\tilde{a}_t$ and $\tilde{A}_t$, which enables even faster convergence in the later stages. The second-best performer is the Near-Optimal Tensor Acceleration method. Although it struggles initially due to a large number of line-search iterations per step, it gradually requires fewer line-search iterations—less than two per step on average—as parameters from previous line-search steps become well-suited for the current iteration. With fewer line-search iterations, the method accelerates and outpaces the remaining competitors. A promising direction for improving this method would be to refine the line-search process through an advanced line-search strategy. Next, the classical Accelerated method starts off slower than the basic method without acceleration. Eventually, the method accelerates and overtakes the basic version. Near-Optimal Proximal-Point Acceleration Method with Segment Search performs very similarly to Basic Methods. It has much fewer iterations, but it does a safe segment search with an average of 3 Basic steps per search. Lastly, the Optimal Acceleration method performs the worst in practice. We believe the main issue lies in the internal parameters, which need tuning and adaptation, as we used the theoretical parameters in our implementation. This leads to many inner iterations without significant global progress. Improving these parameters presents an open question for future research.
More details can be found in the Appendix \ref{ap:experiments}.
\vspace{-.3cm}
\begin{figure}[H]
\begin{subfigure}{0.5\textwidth}
    \includegraphics[width=0.98\linewidth]{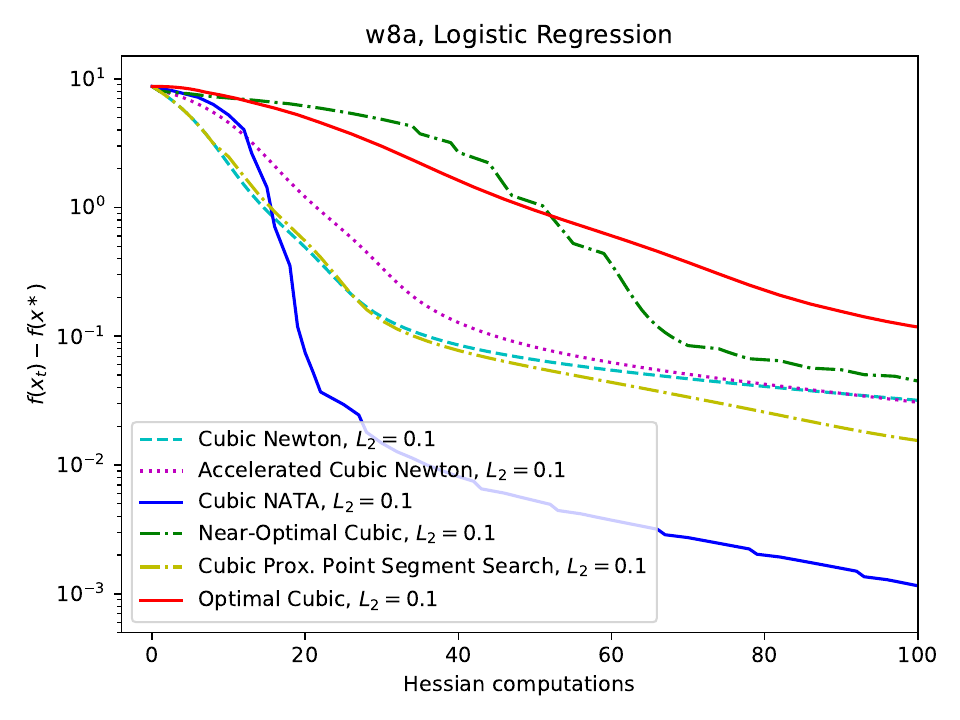}
    \label{fig:w8a_cubic_near_optimal}
\end{subfigure}
\hfill
\begin{subfigure}{0.5\textwidth}
  \includegraphics[width=0.98\linewidth]{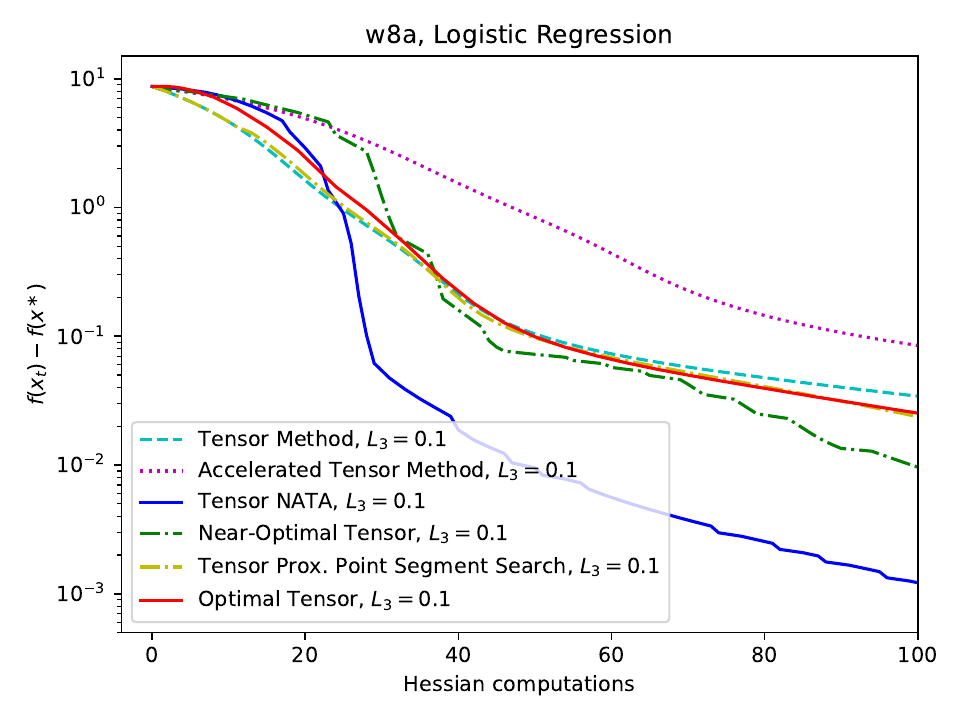}
  \label{fig:w8a_tensor_near_optimal}
\end{subfigure}
\vspace{-.3cm}
    \caption{Comparison of different cubic and tensor acceleration methods on Logistic Regression for \texttt{w8a} dataset from the starting point $x_0= 3e$, where $e$ is a vector of all ones.}
    \vspace{-.5cm}
    \label{fig:w8a_near_optimal}
\end{figure}

\section{Conclusion}
\textbf{Limitations.} This paper primarily focuses on high-order methods which come with certain limitations. First of all, they have computational and memory limitations in high-dimensional spaces, due to the need for Hessian calculations. There are, however, approaches to overcome this, such as using first-order subsolvers,
or inexact Hessian approximations like Quasi-Newton approximations (BFGS, L-SR1). 
In this paper, we focus on the exact Hessian to analyze methods' peak performance. 
\\
Another limitation arises from the specific function classes and the theoretical results considered. Nonetheless, many of the proposed methods can be practically applied to a broader set of problems. For instance, the CRN performs competitively from general non-convex to strongly convex functions.

\textbf{Conclusion and Future work.} In the paper, we demonstrated that the basic high-order methods exhibit \textit{global superlinear convergence} for $\mu$-strongly star-convex functions. This result is significant because it shows that high-order methods accelerate with each iteration, in stark contrast to first-order methods, which typically have a steady linear convergence rate. This raises intriguing questions: \textit{Can superlinear convergence rates be established for accelerated high-order methods as well? What is the best possible global per-iteration decrease that we can theoretically guarantee?}\\
In the second part of the paper, we proposed NATA, a practical acceleration technique. NATA employs a more aggressive schedule adaptation for $A_t$, enabling faster convergence. Our experimental results show that NATA significantly outperforms both basic and accelerated methods, including near-optimal and optimal methods. This opens up another interesting question: \textit{Could other high-order methods be optimized by addressing practical issues that arise due to overly conservative theoretical guarantees?}
\\
Finally, we introduced \textit{OPTAMI}, an open-source library designed to make high-order optimization methods more accessible and easier to experiment with. We plan to expand this library to cover a wider range of settings and incorporate more optimization methods in the future.

\newpage

\bibliographystyle{abbrvnat}
\bibliography{kamzolov_full, bib}

\newpage

\appendix
\section{Related Works}
The origins of Newton method trace back to the foundational works on root-finding algorithms \citet{newton1687philosophiae}, \citet{raphson1697analysis}, \citet{simpson1740essays}, and \citet{bennett1916newton}.  The next breakthrough in applying Newton method to optimization and proving its local quadratic convergence rates was done by \citet{kantorovich1948newton,kantorovich1948functional,kantorovich1949newton,kantorovich1951majorants,kantorovich1951application,kantorovich1956approximate,kantorovich1957applications}. Over the following decades, Newton's method have been studied in depth, modified and improved in works of \citet{more1977levenberg} \citet{griewank1981modification}; \citet{nesterov2006cubic}. Today, Newton's method is widely used in industrial and scientific computing. For a more detailed history of Newton method, see Boris T. Polyak's paper \citep{polyak2007newton}. 

Recently, second-order methods have taken a new direction in development with the introduction of globally convergent methods achieving convergence rates of $O(T^{-2})$~\citep{nesterov2006cubic} and $O(T^3)$~\citep{nesterov2008accelerating} convergence rate, surpassing the performance of first-order methods~\citep{nesterov2018lectures}. These advancements were later extended to higher-order (tensor) methods by~\citet{baes2009estimate}. However, the tensor subproblem in these methods is nonconvex, leading to implementation challenges. This issue was addressed by the introduction of the (Accelerated) Tensor Method in~\citep{nesterov2021implementable}, which resolved the nonconvexity by increasing the scaling coefficient of the regularization term, making the subproblem convex. The basic $p$-th order Tensor Method achieves a rate of  $O\ls T^{-p} \rs$ , while the accelerated version improves this to $O\ls T^{-(p+1)} \rs$. Earlier work by~\citet{monteiro2013accelerated} demonstrated that even faster convergence for second-order methods is possible with the Accelerated Proximal Extragradient method (A-HPE), achieving a rate of $\tilde{O}\ls T^{-7/2} \rs$. Lower bounds for second-order and higher-order methods of $\Omega \ls T^{-(3p + 1)/2}\rs$ were established in~\citep{arjevani2019oracle, nesterov2021implementable}, demonstrating that the A-HPE method is nearly optimal for second-order convex optimization. Subsequently, three independent research groups~\citep{gasnikov2019optimal, bubeck2019near, jiang2019optimal} extended the A-HPE framework to develop tensor methods with a convergence rate of $\tilde{O} \ls T^{-(3p + 1)/2} \rs$, achieving near-optimal complexity for these higher-order methods. Truly optimal methods with a rate of $O \ls T^{-(3p + 1)/2} \rs$  were later proposed in~\citep{kovalev2022first, carmon2022optimal}. Moreover, when assuming higher levels of smoothness, \emph{second-order} methods~\citep{nesterov2021superfast, nesterov2021auxiliary, kamzolov2020near,doikov2024super} have been shown to exceed the established lower complexity bounds for problems with Lipschitz-continuous Hessians. For an in-depth exploration of higher-order methods, see the review in~\citep{kamzolov2022exploiting}.

Since second-order and higher-order methods generally incur greater computational costs due to the need for calculating higher-order derivatives, it is natural to consider inexact or stochastic algorithms to reduce these overheads. In convex optimization, several studies have explored globally convergent second-order methods with inexact Hessians~\citep{ghadimi2017second}, higher-order methods with inexact and stochastic derivatives~\citep{agafonov2023inexact,kamzolov2020optimal}, and adaptive stochastic methods~\citep{antonakopoulos2022extra}. In~\citep{agafonov2024advancing}, a lower bound of 
$\Omega\ls \frac{\sigma_1}{\sqrt{T}} + \frac{\sigma_2}{T^2} + \frac{1}{T^{7/2}}\rs$ was established for stochastic globally convergent second-order methods, where $\sigma_1$ and $\sigma_2$ represent the variances of the stochastic gradients and Hessians, respectively. Additionally, the Accelerated Stochastic Cubic Newton method was introduced, achieving a convergence rate of $O\ls \frac{\sigma_1}{\sqrt{T}} + \frac{\sigma_2}{T^2} + \frac{1}{T^{3}}\rs$, which, to the best of our knowledge, represents the state-of-the-art result. Inexact second-order methods enable the use of Quasi-Newton Hessian approximations, which are well-regarded for their strong practical performance. Although classical Quasi-Newton (QN) methods are known for local superlinear convergence but lack global convergence, their integration with cubic regularization has led to globally convergent methods that also feature relatively inexpensive subproblem solutions~\citep{kamzolov2023accelerated, scieur2023adaptive, jiang2023online}. Second-order methods with inexact or stochastic derivatives also hold promise for distributed optimization~\citep{shamir2014communication_, reddi2016aide, zhang2015disco, daneshmand2021newton, agafonov2021accelerated, dvurechensky2022hyperfast, agafonov2022flecs}, offering an effective way to manage the computational demands typically encountered in distributed settings. One actively developing direction relies on the constructions of Cubic Newton with explicit steps in order to reduce the complexity of solving methods' subproblems~\citep{polyak2009regularized, polyak2017complexity, mishchenko2023regularized, doikov2023gradient,doikov2024super,hanzely2022damped}.

\section{Global Superlinear convergence}
\label{ap:superlinear}

In this section, we show the theoretical global superlinear convergence of high-order methods ($p\geq 2$) for $\mu$-strongly star-convex functions.

\begin{theorem}
    For $\mu_q$-uniformly star-convex \eqref{eq:star-convex-strong} function $f$ of degree $q\geq 2$ with $L_p$ - Lipschitz-continuous $p$-th derivative ($p\geq q \geq 2$) \eqref{def_lipshitz}, Basic Tensor Method from \eqref{eq:model_step} with $M_p\geq (p-1)L_p$ converges with the rate
    \begin{equation}
    \label{eq:linear_rate_tensor}
        f(x_{t+1}) - f^{\ast} \leq (1 - \al_{t,p}) \ls f(x_{t}) - f^{\ast} \rs,
    \end{equation}
    for all $\al_{t,p}^{\ast} \geq \al_{t,p}\geq 0$ such that
    \begin{equation}
      h_{\kappa_{t,p}}(\al_{t,p})\leq 0 \,\, \text{and} \,\, h_{\kappa_{t,p}}(\al_{t,p}^{\ast})= 0,\,\, \text{where} \,\, h_{\kappa}(\al) = \al^p \kappa + \al - 1  \,\, \text{and} \,\, \kappa_{t,p} = \tfrac{q(M_p+L_p)\|x_t - x^{\ast}\|^{p-q+1}}{(p+1)! \mu}.
       \label{eq:al_true_tensor}
    \end{equation}
    This range includes the classical linear rate
    \begin{equation}
    \label{eq:al_low_tensor}
        f(x_t) - f(x^{\ast}) \leq (1-\al^{low}_{p})^t \ls f(x_0) - f(x^{\ast})\rs \,\, \text{for} \,\, \al^{low}_{p} = \min \lb \tfrac{1}{2}; \tfrac{1}{2}\ls\tfrac{(p+1)! \mu}{q(M_p+L_p)D^{p-q+1}}\rs^{1/p}\rb
    \end{equation}
\end{theorem}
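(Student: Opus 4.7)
The plan is to reproduce the template of the proof of Theorem 3.2, tracking the extra exponents that appear for general order $p$ and for uniform star-convexity of degree $q$. First I would derive the master one-step inequality. From the Taylor upper bound \eqref{eq:upper-bound} we have $f(x_{t+1})\leq \Phi_{x_t,p}(x_{t+1})+\tfrac{L_p}{(p+1)!}\|x_{t+1}-x_t\|^{p+1}$; using that $x_{t+1}$ minimizes $\Omega_{x_t,M_p}$, together with $M_p\geq L_p$ (implied by the theorem's hypothesis since $p\geq 2$), this is at most $\min_y\Omega_{x_t,M_p}(y)$; a second application of \eqref{eq:upper-bound} to the inner $\Phi_{x_t,p}(y)$ then yields
\begin{equation*}
f(x_{t+1})\;\leq\;\min_{y\in\bbE}\ls f(y)+\tfrac{M_p+L_p}{(p+1)!}\|y-x_t\|^{p+1}\rs.
\end{equation*}
Taking $y=x_t$ already shows monotonicity $f(x_{t+1})\leq f(x_t)$, so all iterates lie in $\mathcal L$ and $\|x_t-x^{\ast}\|\leq D$.

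Next, I would restrict the minimum to the segment $y=(1-\al)x_t+\al x^{\ast}$ with $\al\in[0,1]$, apply $\mu_q$-uniform star-convexity \eqref{eq:star-convex-strong} to bound $f(y)$, use $\|y-x_t\|^{p+1}=\al^{p+1}\|x_t-x^{\ast}\|^{p+1}$, and subtract $f(x^{\ast})$. Grouping the two $\al$-dependent corrections with common factor $\tfrac{\al\mu_q\|x_t-x^{\ast}\|^{q}}{q}$ produces
\begin{equation*}
f(x_{t+1})-f(x^{\ast})\;\leq\;(1-\al)\ls f(x_t)-f(x^{\ast})\rs+\tfrac{\al\mu_q\|x_t-x^{\ast}\|^{q}}{q}\cdot h_{\kappa_{t,p}}(\al),
\end{equation*}
with $h_{\kappa}(\al)=\kappa\al^{p}+\al-1$ and $\kappa_{t,p}$ exactly as in \eqref{eq:al_true_tensor}. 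Consequently the linear rate \eqref{eq:linear_rate_tensor} holds for every $\al_{t,p}\in[0,\al_{t,p}^{\ast}]$, where $\al_{t,p}^{\ast}$ is the unique positive root of $h_{\kappa_{t,p}}$.

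Third, I would prove a $p$-th order generalization of Lemma 3.3 for $h_\kappa$. Existence and uniqueness of $\al^{\ast}(\kappa)\in(0,1)$ follow from $h_\kappa(0)=-1$, $h_\kappa(1)=\kappa>0$ and $h_\kappa'(\al)=p\kappa\al^{p-1}+1>0$; the strict monotonicity $\al^{\ast}(\kappa_1)>\al^{\ast}(\kappa_2)$ for $\kappa_1<\kappa_2$ follows because $h_\kappa$ is pointwise increasing in $\kappa$. The key lower bound $\al^{\ast}(\kappa)\geq \min\lb \tfrac12,\lp \tfrac{1}{2\kappa}\rp^{1/p}\rb$ I would verify by plugging in the candidate $\al_0$ and case-splitting at $\kappa=2^{p-1}$: when $\kappa\leq 2^{p-1}$ take $\al_0=\tfrac12$ and compute $h_\kappa(\tfrac12)=\tfrac{\kappa}{2^{p}}-\tfrac12\leq 0$; when $\kappa>2^{p-1}$ take $\al_0=(1/(2\kappa))^{1/p}<\tfrac12$ and compute $h_\kappa(\al_0)=\tfrac12+\al_0-1<0$. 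Applying this with the uniform bound $\kappa_{t,p}\leq K:=\tfrac{q(M_p+L_p)D^{p-q+1}}{(p+1)!\mu_q}$ (which uses step one's monotonicity) and the monotonicity of $\al^{\ast}$ in $\kappa$ yields $\al_{t,p}^{\ast}\geq \al_p^{low}$, producing the stated global linear rate \eqref{eq:al_low_tensor}.

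I expect the main obstacle to be this polynomial lemma: in the quadratic case of Lemma 3.3 the quadratic formula provided an explicit root, but for general $p$ no closed form for the root of $h_\kappa$ is available, so the estimate on $\al^{\ast}(\kappa)$ must be obtained indirectly through the plug-in case split at $\kappa=2^{p-1}$. Everything else is a direct, if notationally heavier, generalization of the Cubic Newton argument.
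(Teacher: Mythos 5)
Your proposal is correct and follows essentially the same route as the paper's proof: the same three-fold application of the Taylor bound and the argmin property to get the master one-step inequality, the same restriction to the segment toward $x^{\ast}$ with uniform star-convexity, and the same reduction to the sign of $h_{\kappa}(\al)=\kappa\al^{p}+\al-1$ together with a root-bounding lemma. Your case split at $\kappa=2^{p-1}$ with candidate $(2\kappa)^{-1/p}$ is a minor cosmetic variant of the paper's split at $\kappa=1$ with candidate $\tfrac{1}{2}\kappa^{-1/p}$, and it yields a bound at least as strong as the stated $\al^{low}_{p}$.
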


\begin{proof}
We start the proof from an upper-bound \eqref{eq:upper-bound}

\begin{gather}
    f(x_{t+1}) \stackrel{\eqref{eq:upper-bound}}{\leq} \Phi_{x_t,p}(x_{t+1})   + \tfrac{L_p}{(p+1)!}\|x_{t+1} - x_{t}\|^{p+1}
    \stackrel{\eqref{eq:model_step}}{\leq}  \min \limits_{y \in \mathbb{R}^n} \left \{  \Phi_{x_{t},p}(y) + \tfrac{M_p}{(p+1)!}\|y - x_{t}\|^{p+1}  \right \}\notag\\
    \stackrel{\eqref{eq:upper-bound}}{\leq}  \min \limits_{y \in \mathbb{R}^n} \left \{  f(y) + \tfrac{M_p+L_p}{(p+1)!}\|y - x_{t}\|^{p+1} \right \} 
    \stackrel{y=x_{t} + \al_{t,p}(x^{\ast} - x_{t})}{\leq}  f((1-\al_{t,p}) x_{t} + \al_{t,p} x^{\ast} ) + \al_{t,p}^{p+1}\tfrac{M_p+L_p}{(p+1)!}\|x^{\ast} - x_{t}\|^{p+1} \notag\\
    \stackrel{\eqref{eq:star-convex-strong}}{\leq}  (1-\al_{t,p})f(x_{t}) + \al_{t,p} f(x^{\ast}) - \frac{\al_{t,p} (1-\al_{t,p})\mu}{q}\|x_{t}-x^{\ast}\|^q 
    + \al_{t,p}^{p+1}\tfrac{M_p+L_p}{(p+1)!}\|x^{\ast} - x_{t}\|^{p+1}.  \notag
\end{gather}
From the third inequality, we get that the method is monotone and $f(x_{t+1})\leq f(x_{t})$. Next, we subtract $f(x^{\ast})$ from the both sides and get
\begin{equation*}
    f(x_{t+1}) - f(x^{\ast}) \leq (1-\al_{t,p})\ls f(x_{t}) - f(x^{\ast})\rs
     - \tfrac{\al_{t,p}}{q}\|x_{t}-x^{\ast}\|^q \ls (1-\al_{t})\mu 
    - \al_{t,p}^p \tfrac{q(M_p+L_p)}{(p+1)!} \|x_{t}-x^{\ast}\|^{p+1-q}  \rs,
\end{equation*}
If we choose $\al_t$ such that
\begin{equation*}
     \al_{t,p}^p\tfrac{q(M_p+L_p)}{(p+1)!}\|x_{t}-x^{\ast}\|^{p-q+1} + \mu\al_{t,p} -\mu\leq 0,
\end{equation*}
or equivalent version
\begin{equation*}
     \al_{t,p}^p\tfrac{q(M_p+L_p)}{(p+1)!\mu_q}\|x_{t}-x^{\ast}\|^{p-q+1} + \al_{t,p} -1\leq 0,
\end{equation*}
we get \eqref{eq:linear_rate_tensor}.
To understand the solutions of such inequality, we present Lemma \ref{lem:al_z_tensor} with the useful properties. From this Lemma, the convergence rate is well-defined as
$0<\al_{t,p}^{\ast}\leq 1$ from \eqref{eq:al_lower_tensor}.  As $\|x_t - x^{\ast}\|\leq D$ from \eqref{eq:D_distance} and $\al^{low}_{p} \leq \al_{t,p}^{\ast}$ by \eqref{eq:al_lower_tensor}, we get the linear convergence rate \eqref{eq:al_low_tensor}.
\end{proof}

\begin{lemma}
\label{lem:al_z_tensor}
    For $z>0$, the solution $\al^{\ast}(z)$ of  
    \begin{equation}
        \label{eq:al_z_tensor}
         h_z(\al^{\ast}(z)) = 0, \qquad \text{where} \qquad h_z(\al) = \al^p z + \al - 1,  
    \end{equation}
has the next constant lower and upper-bound 
    \begin{equation}
    \label{eq:al_lower_tensor}
        \min \lb 1, \tfrac{1}{z^{1/p}}\rb > \al^{\ast}(z) > \min \lb \tfrac{1}{2}; \tfrac{1}{2 z^{1/p}}\rb.
    \end{equation}
    This bounds show that, for $z\geq 1$, the solution $\al^{\ast}(z)$ is similar to $z^{-1/p}$ up to a constant factor as $z^{-1/p}>\al^{\ast}(z)>0.5 z^{-1/p}$. 
    
    For $z\leq 1$, we get the next improved upper and lower-bound
    \begin{equation}
        \label{eq:al_lower_sup}
        1-\tfrac{z}{p+1} \geq \al^{\ast}(z) \geq 1 - z,
    \end{equation}
    which means that for $z\rightarrow +0$ we have $\al^{\ast}(z)\rightarrow 1$.
    
    The solution $\al^{\ast}(z)$ is monotonically decreasing
    \begin{equation}
    \label{eq:al_monotone_tensor}
      \forall z,y>0: \quad  z < y \quad \Rightarrow \quad \al^{\ast}(z) > \al^{\ast}(y).
    \end{equation}
\end{lemma}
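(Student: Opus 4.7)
The plan is to treat $h_z(\alpha) = \alpha^p z + \alpha - 1$ as the main object and extract every bound from a single sign check at a carefully chosen test point. I would first record that $h_z'(\alpha) = p\alpha^{p-1}z + 1 > 0$ on $[0,\infty)$, so $h_z$ is strictly increasing, and combined with $h_z(0) = -1 < 0$ and $h_z(1) = z > 0$ this yields a unique root $\alpha^*(z) \in (0,1)$. The second key observation is that $h_z(\alpha)$ is strictly increasing in $z$ for every fixed $\alpha > 0$, since $\partial h_z/\partial z = \alpha^p > 0$. Every upper and lower bound on $\alpha^*(z)$ then reduces to evaluating $h_z$ at a proposed bound and reading off the sign.

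For the upper bound I would compute $h_z(1) = z > 0$ and $h_z(z^{-1/p}) = 1 + z^{-1/p} - 1 = z^{-1/p} > 0$, and conclude $\alpha^*(z) < \min\{1, z^{-1/p}\}$ by the strict monotonicity of $h_z$ in $\alpha$. For the constant lower bound I would split on whether $z \leq 1$ or $z > 1$: when $z \leq 1$, $\min\{1/2, 1/(2z^{1/p})\} = 1/2$ and $h_z(1/2) = 2^{-p} z - 1/2 \leq 2^{-p} - 1/2 < 0$ (using $p \geq 2$); when $z > 1$, $\min\{1/2, 1/(2z^{1/p})\} = 1/(2z^{1/p})$ and $h_z(1/(2z^{1/p})) = 2^{-p} + 1/(2z^{1/p}) - 1 < 2^{-p} - 1/2 < 0$.

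For the sharpened bounds valid when $z \leq 1$, the lower bound is essentially free: $h_z(1-z) = z\bigl((1-z)^p - 1\bigr) \leq 0$ since $1-z \in [0,1]$. The upper bound $\alpha^*(z) \leq 1 - z/(p+1)$ requires showing $h_z\bigl(1 - z/(p+1)\bigr) \geq 0$, which after simplification is equivalent to $(1 - z/(p+1))^p \geq 1/(p+1)$; this is the single nontrivial estimate, and it falls out of Bernoulli's inequality: $(1 - z/(p+1))^p \geq 1 - pz/(p+1) \geq 1 - p/(p+1) = 1/(p+1)$, where the last step uses $z \leq 1$.

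Finally, the monotonicity claim follows in one line from the two monotonicity observations: if $z < y$, then $h_y(\alpha^*(z)) > h_z(\alpha^*(z)) = 0 = h_y(\alpha^*(y))$, and since $h_y$ is strictly increasing in $\alpha$ we conclude $\alpha^*(z) > \alpha^*(y)$. I do not anticipate any real obstacle here; the whole proof is a sequence of routine substitutions plus one application of Bernoulli. The only care needed is in the case split for the constant lower bound and in tracking where strictness comes from, which is quietly secured by the assumption $p \geq 2$ inherited from the surrounding theorem.
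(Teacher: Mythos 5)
Your proof is correct, and its overall architecture --- establishing strict monotonicity of $h_z$ in $\al$, locating the unique root in $(0,1)$, and then reducing every bound to a sign check of $h_z$ at a test point --- is exactly the paper's. The one place where you genuinely diverge is the improved upper bound $\al^{\ast}(z) \leq 1-\tfrac{z}{p+1}$ for $z\leq 1$. The paper first uses $z\leq 1$ to reduce $h_z\ls 1-\tfrac{z}{p+1}\rs \geq 0$ to the inequality $\ls 1-\tfrac{1}{p+1}\rs^p \geq \tfrac{1}{p+1}$, i.e.\ $p^p \geq (p+1)^{p-1}$, and then proves that by introducing the auxiliary function $s(x)=x\log x - (x-1)\log(x+1)$ and analyzing its first and second derivatives. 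Your route --- factoring out $z$ to get the equivalent condition $\ls 1-\tfrac{z}{p+1}\rs^p \geq \tfrac{1}{p+1}$ and applying Bernoulli, $\ls 1-\tfrac{z}{p+1}\rs^p \geq 1-\tfrac{pz}{p+1} \geq \tfrac{1}{p+1}$ --- is shorter, avoids any calculus, and yields the same non-strict bound; the hypothesis $x=-\tfrac{z}{p+1}\geq -1$ needed for Bernoulli holds trivially since $z\leq 1 < p+1$. Your one-line monotonicity argument via $h_y(\al^{\ast}(z)) > h_z(\al^{\ast}(z)) = 0$ is also the same computation the paper writes out as $\al^{\ast}(z)^p(y-z)>0$. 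In short: same proof everywhere except one estimate, where your Bernoulli substitution is a clean simplification of the paper's argument.
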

\begin{proof}
    We start the proof from the upper-bound inequality. Note, the function $h_z(\al)=\al^p z + \al - 1$ is monotonically increasing for $\al \geq 0$, as $h_z(\al)'=p\al^{p-1} z + 1>0$. As $h_z(0)=-1$ and $h_z(1)=z>0$, we get that the solution is unique and $\al^{\ast}(z)\in [0;1]$. Next, for $\al = \tfrac{1}{z^{1/p}}$, we have
    \begin{equation*}
       h_z(\tfrac{1}{z^{1/p}})=\tfrac{1}{z} z + \tfrac{1}{z^{1/p}} - 1 = \tfrac{1}{z^{1/p}}>0,
    \end{equation*}
    which means that $\min \lb 1, \tfrac{1}{z^{1/p}}\rb > \al^{\ast}(z)$ and we proved the upper-bound. 

    Next, we move to the lower-bound inequality. For $p\geq 2$, $z \geq 1$ and $\al = \tfrac{1}{2 z^{1/p}}$, we have 
    \begin{equation*}
       h_z(\tfrac{1}{2 z^{1/p}})=\tfrac{1}{2^p z} z + \tfrac{1}{2 z^{1/p}} - 1 = \tfrac{1}{2^p} + \tfrac{1}{2 z^{1/p}} -1\leq \tfrac{1}{2^p} + \tfrac{1}{2} -1 
       < 0,
    \end{equation*}
    where the first inequality is coming from $z \geq 1$. The second part of lower-bound holds for $0< z < 1$ because 
    \begin{equation*}
        h_z(\tfrac{1}{2})=\tfrac{1}{2^p} z + \tfrac{1}{2} - 1 < \tfrac{1}{2^p} - \tfrac{1}{2} < 0.
    \end{equation*}
    We proved \eqref{eq:al_z_tensor}. Now, to understand the behavior of $\al^{\ast}(z)$ for $0< z\leq 1$, we improve the upper and lower-bound for $0<z\leq 1$. For $0<z\leq 1$ and $\al= 1-z$, we get the improved lower-bound
    \begin{equation*}
        h_z(1-z)=(1-z)^p z + 1 - z - 1 = (1-z)^p z - z = ((1-z)^p - 1)z  < 0.
    \end{equation*}
     For $p\geq 2$, $0<z\leq 1$ and $\al= 1-\tfrac{z}{p+1}$, we get the improved upper-bound
     \begin{gather}
        h_z\ls 1-\tfrac{z}{p+1} \rs=\ls 1-\tfrac{z}{p+1}\rs^p z + 1 - \tfrac{z}{p+1} - 1 = \ls 1-\tfrac{z}{p+1} \rs^p z - \tfrac{z}{p+1}\notag\\
        \geq \ls \ls 1-\tfrac{1}{p+1}\rs^p - \tfrac{1}{p+1}\rs z = \ls \tfrac{p^p-(p+1)^{p-1}}{(p+1)^p}\rs z  > 0, \label{eq:ppp1}
    \end{gather}
    where to use the last inequity or $p\geq 2$ we need to use some additional analysis.  We introduce an additional function and its derivatives 
    \begin{gather*}
    s(x) = x \log(x) - (x-1)\log(x+1),\\
    s(x)' = \tfrac{2}{1+x} + \log\ls 1- \tfrac{1}{1+x}\rs,\\
    s(x)'' = \tfrac{1-x}{x(1+x)^2}.
    \end{gather*}
    It is clear that $s(x)''<0$ for $x>1$. It means that $s(x)'$ is monotonically decreasing. $s(1)'=1-log(2)>0$ and the limit $\lim_{x\rightarrow +\infty} s(x)'=0$, hence $s(x)'\geq 0$ and $s(x)$ is a monotonically increasing function. $s(1) = 0$, hence $s(x)>0$ for $x>1$ and finally $x^x>(x+1)^{x-1}$ for $x>1$, which proves \eqref{eq:ppp1} and finishes the proof of the improved upper-bound \eqref{eq:al_lower_sup}.

Finally, we show that the solution $\al^{\ast}(z)$ is monotonically decreasing with $z$. Let $0<z<y$ and $\al^{\ast}(z)$ and $\al^{\ast}(y)$ are such that $h_z(\al^{\ast}(z))=0$ and $h_y(\al^{\ast}(y))=0$, then
\begin{equation*}
    \al^{\ast}(z)^p y + \al^{\ast}(z) -1 \overset{\eqref{eq:al_z_tensor}}{=} \al^{\ast}(z)^p y + 1 - \al^{\ast}(z)^p z - 1= \al^{\ast}(z)^p (y-z)>0,
\end{equation*}
which proves that $\al^{\ast}(z) > \al^{\ast}(y)$ and hence the solution $\al^{\ast}(z)$ is monotonically decreasing.
\end{proof}

Now, we proceed to the second theorem to establish the global superlinear convergence of high-order methods. The key idea behind the proof is to observe that $|x_t - x^{\ast}|$ in \eqref{eq:al_true_tensor} decreases for $\mu_q$-uniformly star-convex functions. This allows us to notice the fact that $\kappa_{t,p}$ is also decreasing, hence $\al_{t,p}$ increases according to \eqref{eq:al_monotone_tensor}, ultimately leading to superlinear convergence.

\begin{theorem}(Copy of Theorem \ref{thm:superlinear_tensor})
    For $\mu_q$-uniformly star-convex \eqref{eq:star-convex-strong} function $f$ of degree $q\geq 2$ with $L_p$ - Lipschitz-continuous $p$-th derivative ($p\geq q \geq 2$) \eqref{def_lipshitz}, Basic Tensor Method from \eqref{eq:model_step} with $M_p\geq (p-1)L_p$ converges converges globally superlinearly as defined in \eqref{eq:rate_superlinear} with $\zeta_{t,p} = 1-\al_{t,p}^{sl}$
    \begin{equation}
        \label{eq:superlinear_convergence_tensor}
        f(x_{t+1}) - f^{\ast} \leq (1 - \al_{t,p}^{sl}) \ls f(x_{t}) - f^{\ast} \rs,
    \end{equation}
    where $\al_{t,p}^{sl}$ is such that
    \begin{equation}
    \label{eq:al_superlinear_tensor}
    \begin{gathered}
         h_{\kappa_{t,p}^{sl}}(\al_{t,p}^{sl})= 0, \quad \text{where} \quad h_{\kappa}(\al) = \al^p \kappa + \al - 1, \quad \al^{low}_{p} = \min \lb \tfrac{1}{2}; \tfrac{1}{2}\ls\tfrac{(p+1)! \mu}{q(M_p+L_p)D^{p-q+1}}\rs^{1/p}\rb \\
          \text{and} \quad \kappa_{t,p}^{sl} = \tfrac{(M_p+L_p)q^{(q+1)/q}}{(p+1)! \mu^{(q+1)/q}} (1-\al^{low}_{p})^{t/q}  \ls f(x_{0}) - f(x^{\ast})\rs^{1/q}.
        \end{gathered}
    \end{equation}
    The aggregated convergence rate for $T\geq 1$ equals to 
    \begin{equation}
    \label{eq:aggregated_superlinear_tensor}
    f(x_{T})-f(x^{\ast})  \leq (f(x_{0}) - f(x^{\ast})) \textstyle{\prod}_{t=1}^{T} (1-\al^{sl}_{t,p}).
\end{equation}
\end{theorem}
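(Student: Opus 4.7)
The plan is to mirror the superlinear convergence argument given for the Cubic Newton case ($p=q=2$) in the main text, generalizing it to arbitrary $p\geq q\geq 2$. The overall strategy has four ingredients: invoke the linear-rate theorem already proved in this appendix to show that $\|x_t-x^\ast\|$ decays geometrically; substitute this decay into the definition of $\kappa_{t,p}$ from \eqref{eq:al_true_tensor} to obtain a strictly decreasing upper bound $\kappa_{t,p}^{sl}\geq \kappa_{t,p}$; invoke the monotonicity property \eqref{eq:al_monotone_tensor} of Lemma \ref{lem:al_z_tensor} to conclude that $\al^{\ast}(\kappa_{t,p}^{sl})$ is strictly increasing in $t$; then telescope to get the aggregated bound.

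First, I would invoke the linear rate \eqref{eq:al_low_tensor}, which gives $f(x_t)-f(x^\ast)\leq (1-\al^{low}_p)^t(f(x_0)-f(x^\ast))$. Second, I would apply the $q$-order growth condition \eqref{eq:strong-growth} to bound $\|x_t-x^\ast\|\leq \ls\tfrac{q}{\mu_q}(f(x_t)-f(x^\ast))\rs^{1/q}$, and combine it with the linear rate to produce the geometric decay $\|x_t-x^\ast\|\leq \ls\tfrac{q}{\mu_q}\rs^{1/q}(1-\al^{low}_p)^{t/q}(f(x_0)-f(x^\ast))^{1/q}$. Plugging this into $\kappa_{t,p}=\tfrac{q(M_p+L_p)\|x_t-x^\ast\|^{p-q+1}}{(p+1)!\mu_q}$ from \eqref{eq:al_true_tensor} and collecting constants yields exactly the quantity $\kappa_{t,p}^{sl}$ appearing in \eqref{eq:al_superlinear_tensor}, and by construction $\kappa_{t,p}^{sl}\geq \kappa_{t,p}$.

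Third, I would define $\al_{t,p}^{sl}$ as the unique root of $h_{\kappa_{t,p}^{sl}}$. Since $\kappa_{t,p}^{sl}\geq \kappa_{t,p}$, the monotonicity \eqref{eq:al_monotone_tensor} gives $\al_{t,p}^{sl}\leq \al_{t,p}^{\ast}$, so this $\al_{t,p}^{sl}$ lies in the admissible range of the companion linear-rate theorem, and \eqref{eq:linear_rate_tensor} immediately delivers the per-iteration contraction \eqref{eq:superlinear_convergence_tensor}. Moreover, $\kappa_{t,p}^{sl}$ is strictly decreasing in $t$, so \eqref{eq:al_monotone_tensor} forces $\al_{t,p}^{sl}<\al_{t+1,p}^{sl}$, i.e.\ $\zeta_{t,p}>\zeta_{t+1,p}$; and because $\kappa_{t,p}^{sl}\to 0$, the improved upper bound \eqref{eq:al_lower_sup} yields $\al_{t,p}^{sl}\to 1$, hence $\zeta_{t,p}\to 0$, verifying Definition \eqref{eq:rate_superlinear}. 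Telescoping \eqref{eq:superlinear_convergence_tensor} from $t=0$ to $T-1$ then produces the aggregated rate \eqref{eq:aggregated_superlinear_tensor}.

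The main obstacle I expect is bookkeeping the exponents so that the stated formula for $\kappa_{t,p}^{sl}$ is recovered exactly: as written, the exponents $t/q$ and $1/q$ appearing on $(1-\al^{low}_p)^t$ and $f(x_0)-f(x^\ast)$ correspond to the case $p=q$, while for general $p\geq q$ one should propagate the extra factor $(p-q+1)/q$ coming from the power $\|x_t-x^\ast\|^{p-q+1}$ in $\kappa_{t,p}$. Modulo either restricting to $p=q$ or replacing those exponents by their corrected versions, the argument is a direct structural lift of the CRN proof, and the remaining routine check is that $\al_{t,p}^{sl}\in(0,1]$ at every iteration so that invoking \eqref{eq:linear_rate_tensor} is legitimate.
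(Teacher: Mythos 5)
Your proposal follows the paper's proof essentially verbatim: bound $\|x_t-x^\ast\|$ via uniform star-convexity and the linear rate \eqref{eq:al_low_tensor} from the companion theorem, observe that the resulting $\kappa_{t,p}^{sl}$ is decreasing in $t$, invoke the monotonicity \eqref{eq:al_monotone_tensor} of Lemma \ref{lem:al_z_tensor} to get $\al_{t,p}^{sl}<\al_{t+1,p}^{sl}$, and telescope. Your side remark on the exponents is also a correct catch: substituting the bound on $\|x_t-x^\ast\|^{p-q+1}$ into $\kappa_{t,p}$ from \eqref{eq:al_true_tensor} yields exponents $(p-q+1)/q$ on $(1-\al^{low}_p)^{t}$ and on $f(x_0)-f(x^\ast)$, and constants $q^{(p+1)/q}/\mu^{(p+1)/q}$, so the displayed formula for $\kappa_{t,p}^{sl}$ is exact only when $p=q$; the paper's proof makes the same substitution without propagating the power $p-q+1$, and your explicit verification that $\zeta_{t,p}\to 0$ via \eqref{eq:al_lower_sup} fills a step the paper leaves implicit.
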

\begin{proof}
From $\mu$-uniform star-convexity \eqref{eq:star-convex-strong}, we can upper-bound $\|x_t-x^{\ast}\|$ in \eqref{eq:al_true_tensor} by
\begin{gather*}
    \|x_t - x^{\ast}\| \leq \ls\tfrac{q}{\mu}\ls f(x_t) - f(x^{\ast})\rs\rs^{1/q} \stackrel{\eqref{eq:al_low_tensor}}{\leq} \ls\tfrac{q}{\mu}\ls (1-\al^{low})^{t} ( f(x_{0}) - f(x^{\ast}))\rs\rs^{1/q}.
\end{gather*}
So, we got that $\|x_t - x^{\ast}\|$ is linearly decreasing to zero.
From that, we get a new superlinear $\al_{t,p}^{sl}\leq \al_{t,p}^{\ast}$ from \eqref{eq:al_superlinear_tensor}.
As $\kappa_t^{sl}$ is getting smaller within each iteration $\kappa_{t,p}^{sl}>\kappa_{t+1,p}^{sl}$, we get that $\al(\kappa_{t,p}^{sl}) < \al(\kappa_{t+1,p}^{sl})$ from \eqref{eq:al_monotone_tensor}. Finally, for $\zeta_{t,p} =  1 - \al(\kappa_{t,p}^{sl})$, we get $\zeta_{t,p} > \zeta_{t+1,p}$ in \eqref{eq:superlinear_convergence_tensor}. This finishes the proof of global superlinear convergence. The aggregated convergence rate equals to \eqref{eq:aggregated_superlinear_tensor}.
\end{proof}

\section{Methods}
\subsection{Subsolover for Basic Tensor Method}
\label{ap:tensor_subsolver}
In this section, we introduce the subsolver, called Bregman Distance Gradient Method (BDGM), for the Basic Tensor Method of order $p=3$ \eqref{eq:third-order}: 
\begin{equation}
   x_{t+1} = x_t + \argmin_{h\in \bbE} \lb f(x_t) + \nabla f(x_t)\lp h \rp + \tfrac{1}{2} \nabla^2 f(x_t)\lp h\rp^2 + \tfrac{1}{6} D^3 f(x_t)\lp h\rp^3 + \tfrac{M_3}{24}\|h\|^{4} \rb.
    \label{eq:third-order_app}
\end{equation}
The first effective subsolver was introduced by Nesterov in \citep[Section 5]{nesterov2021implementable} and later improved in \citep{nesterov2021superfast}. Next, we describe the BDGM subsolver by following the \citep{nesterov2021superfast}.

\paragraph{Relatively inexact $p$-th order solution.}
First, we introduce the relatively inexact $p$-th order solution of \eqref{eq:model_step} 
\begin{equation}
    \mathcal{N}^{\gamma}_{M_p}(x) = \lb y \in \bbE: \| \nabla \Omega_{x,M_p}(y)\|_{\ast} \leq \gamma \|\nabla f(y) \|_{\ast} \rb,
\end{equation}
where $\gamma \in [0,1)$ is an accuracy parameter. Then from \citep[Theorem 2.1]{nesterov2021superfast}, for $\gamma$ and $M_p$ such that $\gamma + \tfrac{L_p}{M_p}\leq \tfrac{1}{p}$ any point $y\in \mathcal{N}^{\gamma}_{M_p}(x)$ satisfies 
\begin{equation*}
    f(x)-f(y) \geq c_{\gamma,M_p} \|\nabla f(y)\|_{\ast}^{\frac{p+1}{p}}, \quad \text{where} \quad c_{\gamma, M_p} = \lp \tfrac{(1-\gamma)p!}{L_p+M_p}\rp^{\tfrac{1}{p}}.
\end{equation*}
Note, that for the exact solution, we get the same improvement guarantee with $\gamma = 0$. 
For $p=3$ and \eqref{eq:third-order_app}, we choose $\gamma = 1/6$ and $M_3 = 6L_3$, then $\mathcal{N}_{L_3}(x) = \mathcal{N}^{1/6}_{L_3}(x)$ and the method $$x_{t+1} \in \mathcal{N}_{L_3}(x_t)$$ converge with the same rate up to a constant as an exact version \citep[Theorem 2.2]{nesterov2021superfast}. Note, $M_3\geq 3L_3$ is also required for the convexity of the subproblem \eqref{eq:third-order_app}. In our implementation, all third-order basic steps are solved with this relative inexactness and $M_3 = 6L_3$. This approach creates practical and parameter-free stopping criteria for the subproblem solvers. 

\paragraph{Relative smoothness and relative strong convexity.}
Now, we move on to the concept of relative smoothness and relative strong convexity proposed in \citep{lu2018relatively}. Similarly to classical smoothness and strong convexity, we say that function $\phi(h)$ is relatively $L_{\rho}$-smooth and relatively $\mu_{\rho}$ strongly convex with respect to scaling function $\rho(h)$ if 
$$\mu_{\rho} \nabla^2 \rho(h) \preceq \nabla^2 \phi(h) \preceq L_{\rho} \nabla^2 \rho(h).$$
In classical regime, $\rho(h) = \tfrac{1}{2}\|h\|^2$ and $\nabla^2 \rho(h)$ is an identity matrix. For the scaling function $\rho(h)$, we introduce its Bregman distance
\begin{equation*}
    \beta_{\rho}(h,y) = \rho(y) - \rho(h) - \la \nabla \rho(h), y - h\ra.
\end{equation*}
Now the gradient method with respect to this Bregman distance is called Bregman Distance Gradient Method (BDGM) and has the next form
\begin{equation*}
    h_{k+1} = \argmin_{y\in \E} \lb  \la\nabla \phi(h_k), y-h_k\ra + 2L_{\rho} \beta_{\rho}(h_k, y)\rb.
\end{equation*}
The convergence rate of such method is $O\ls\tfrac{L_{\rho}}{\mu_{\rho}}\log\ls \tfrac{\phi(h_0) - \phi(h^{\ast})}{\varepsilon}\rs\rs$.

\paragraph{Bregman Distance Gradient Method (BDGM) for \eqref{eq:third-order_app}.}
Let's apply this approach to the solution of subproblem \eqref{eq:third-order_app} with $M_3=6 L_3$. In \citep[Section 4]{nesterov2021superfast}, it was shown that the subproblem function $\phi(h) = \nabla f(x_t)\lp h \rp + \tfrac{1}{2} \nabla^2 f(x_t)\lp h\rp^2 + \tfrac{1}{6} D^3 f(x_t)\lp h\rp^3 + \tfrac{L_3}{4}\|h\|^{4}$ is relatively smooth and relatively strongly convex with respect to
$$ \rho(h) = \tfrac{1}{2} \nabla^2 f(x_t)\lp h\rp^2 + \tfrac{L_3}{4}\|h\|^4$$
with constants $L_{\rho}=1 + \tfrac{1}{\sqrt{2}}$ and $\mu_{\rho} = 1 - \tfrac{1}{\sqrt{2}}$. It means that the method has an incredibly fast convergence rate $O\ls \tfrac{\sqrt{2}+1}{\sqrt{2}-1}\log\ls \tfrac{\phi(h_0) - \phi(h^{\ast})}{\varepsilon}\rs\rs$. The details and more formal convergence results are presented in \citep{nesterov2021superfast}. 

Now, we present the explicit formulation of the BDGM for \eqref{eq:third-order_app}. First, we have the general form
\begin{equation}
    \label{eq:bdgm_general}
    h_{k+1} = \argmin_{y\in \E} \lb  \la\nabla \phi(h_k), y-h_k\ra + 2L_{\rho} \beta_{\rho}(h_k, y)\rb.
\end{equation}
Let us calculate $\nabla \phi(h_k)$ first. It equals to
\begin{equation*}
    \nabla \phi(h_k) = \nabla f(x_t) +  \nabla^2 f(x_t)  h_k + \tfrac{1}{2} D^3 f(x_t)\lp h_k\rp^2 + L_3 \|h_k\|^{2} h_k.
\end{equation*}
In \citep{nesterov2021superfast}, the universal approximation for $D^3 f(x_t)\lp h_k\rp^2$ is presented by using the finite differences approach. However, in practice, we recommend using autogradient computation of $D^3 f(x_t)\lp h_k\rp^2$ if it is possible. The computation by autogradient is much more precise while having the same computational complexity. The computation complexity of $D^3 f(x_t)\lp h_k\rp^2$ by autogradient is similar to calculating three gradients as $D^3 f(x)\lp h\rp^2 = \nabla_x (\nabla^2 f(x)[h]^2)= \nabla (\nabla \lb \nabla f(x)[h]\rb[h])$. Also, autogradient computations are commonly used in modern frameworks such as PyTorch, Jax, and others. So, essentially we still have access to third-order information but with the complexity of a gradient computation.

Now, let us calculate explicit $\beta_{\rho}(h_k, y)$
\begin{gather*}
    \beta_{\rho}(h_k,y) = \rho(y) - \rho(h) - \la \nabla \rho(h), y - h\ra\\
    = \tfrac{1}{2} \nabla^2 f(x_t)\lp y\rp^2 + \tfrac{L_3}{4}\|y\|^4 - 
    \tfrac{1}{2} \nabla^2 f(x_t)\lp h_k\rp^2 - \tfrac{L_3}{4}\|h_k\|^4 \\
    - \la \nabla^2 f(x_t)\lp h_k\rp + L_3\|h_k\|^2 h_k, y - h_k\ra.
\end{gather*}
Note, that the constant terms are useless for finding the argminimum in \eqref{eq:bdgm_general}, hence we can remove them. We also can divide all parts of \eqref{eq:bdgm_general} by $2L_{\rho}=2+\sqrt{2}$ for simplicity and unite the linear parts together
\begin{equation*}
\begin{gathered}
    g_k = \tfrac{1}{2+\sqrt{2}} \nabla \phi(h_k) - \nabla^2 f(x_t)\lp h_k\rp - L_3\|h_k\|^2 h_k\\
    = \tfrac{2-\sqrt{2}}{2} \ls \nabla f(x_t) +  \nabla^2 f(x_t)h_k + \tfrac{1}{2} D^3 f(x_t)\lp h_k\rp^2 + L_3 \|h_k\|^{2} h_k \rs - \nabla^2 f(x_t)\lp h_k\rp - L_3\|h_k\|^2 h_k\\
    =\tfrac{2-\sqrt{2}}{2} \ls \nabla f(x_t)  + \tfrac{1}{2} D^3 f(x_t)\lp h_k\rp^2 \rs - \tfrac{\sqrt{2}}{2} \ls\nabla^2 f(x_t)\lp h_k\rp + L_3\|h_k\|^2 h_k\rs
\end{gathered}
\end{equation*}

So, we finally get the next explicit BDGM step

\begin{equation}
    \label{eq:bdgm_explicit}
    h_{k+1} = \argmin_{y\in \E} \lb  \la g_k, y\ra + \tfrac{1}{2} \nabla^2 f(x_t)\lp y\rp^2 + \tfrac{L_3}{4}\|y\|^4\rb.
\end{equation}
This step doesn't require the computation of a full third-order derivative and is similar to the Cubic Regularized Newton step. Hence, we count it as a second-order method. So, the total complexity of Basic Tensor Method for convex functions is $\tilde{O} \ls \tfrac{L_3D^4}{T^3}\rs$ steps of \eqref{eq:bdgm_explicit}, where $\tilde{O}(\cdot)$ means number of iterations up to a logarithmic factor.

\paragraph{Inner subsolver for \eqref{eq:bdgm_explicit}.}
The last part is to solve \eqref{eq:bdgm_explicit}. We solve it similarly to the Cubic Regularized Step by ray-search with eigenvalue decomposition (EVD). First, we apply eigenvalue decomposition to $\nabla^2 f(x_t)$
\begin{equation}
    \label{eq:hessian_evd}
    \nabla^2 f(x_t) = U S U^{\top},
\end{equation}
where $S \in \R^{d \times d}$ is a diagonal matrix with eigenvalues and $U\in \R^{d \times d}$ is an orthoganal matrix such that $U U^{\top}=I$. Then, we denote $v = U^{\top}y$ and $\hat{g}=U^{\top}g_k$. Now we can formulate a dual one-dimensional problem.
\begin{align}
    &\min_{y\in \E} \lb \la g_k,y \ra + \tfrac{1}{2}\la \nabla^2 f(x_k)y,y\ra + \tfrac{L_3}{4}\|y\|^4 \rb \notag\\
    &= \min_{y\in \E} \lb \la U^{\top}g_k,U^{\top}y \ra + \tfrac{1}{2}\la USU^{\top}y,y\ra + \tfrac{L_3}{4}\|U^{\top}y\|^4 \rb \notag\\
    &= \min_{v\in \E} \lb \la \tilde{g},v \ra + \tfrac{1}{2}\la Sv,v\ra + \tfrac{L_3}{4}\|v\|^4 \rb \notag\\
    &= \min_{v\in \E}\max_{\tau \geq 0} \lb \la \tilde{g},v \ra + \tfrac{1}{2}\la Sv,v\ra + \tfrac{\sqrt{2L_3}}{2}\|v\|^2 \tau - \tfrac{1}{2}\tau^2 \rb \notag\\
    &= \max_{\tau \geq 0} \min_{v \in \E} \lb \la \tilde{g},v \ra + \tfrac{1}{2}\la Sv,v\ra + \tfrac{\sqrt{2L_3}}{2}\|v\|^2 \tau - \tfrac{1}{2}\tau^2 \rb \notag\\
    &= \max_{\tau \geq 0}  \lb - \tfrac{1}{2}\la \ls S + \tau \sqrt{2L_3} \rs^{-1} \tilde{g}, \tilde{g} \ra  - \tfrac{1}{2}\tau^2 \rb,\label{eq:dual_line_search}
\end{align}
where $\tau^{\ast} = \tfrac{\sqrt{2L_3}}{2}\|v\|^2$ for the third equality and $v = -\ls S + \tau \sqrt{2L_3} \rs^{-1} \tilde{g}$ in the last equality. By solving \eqref{eq:dual_line_search} with one-dimensional ray-search, we find optimal $\tau^{\ast}$ then we can calculate $v$ and $y$, which we found the solution for subproblem \eqref{eq:bdgm_explicit}. In our code, we use eigenvalue decomposition for efficiency of the ray-search, but it is also possible to just inverse the regularized matrix multiple times in \eqref{eq:dual_line_search} or apply some efficient first-order method for quadratic problems such as conjugate gradient.

To finalize, in this section we presented the subsolver which allows us to efficiently implement the Basic Tensor Method for $p=3$ with the complexity same up to a logarithmic factor as Cubic Regularized Newton Method.

\subsection{Nesterov Accelerated Tensor Methods}
\label{app:Nesterov}

In this section, we present Nesterov Acceleration for tensor methods proposed in \citep{nesterov2021implementable,nesterov2021superfast}.
First, let us introduce the main parts of the method. The key part of such acceleration is the estimated sequences technique. It is based on linear approximations of function $f(x)$ in a sequence of points $x_t$, which allows to construct the estimating function $\psi_t(x)$ for a scaling sequence $a_t \in \R_{+}$:
\begin{equation}
    \psi_{t+1}(z) = \psi_{t}(z) + a_{t+1}\ls f(x) + \la \nabla f(x), z-x\ra\rs, \quad \text{where} \quad \psi_0(z) = \tfrac{1}{p+1}\|z-x_0\|^{p+1}.
\end{equation}
Additionally, we introduce the sequence
\begin{equation}
    \label{eq:big_A_app}
    A_{t+1} = A_t + a_t.
\end{equation}

Now, we are ready to present the accelerated method.

 \begin{algorithm}[H]
  \caption{Nesterov Accelerated Tensor Method}\label{alg:Superfast_app}
 \begin{algorithmic}[1]
      \STATE \textbf{Input:} $x_0$ is starting point; constant $L_p$, total number of iterations $T$, and sequence $A_t$, where $A_0 = 0$.
      \STATE Set objective function
      \begin{equation*}
          \psi_0(z) = \tfrac{1}{p+1}\|z - x_0\|^{p+1}
      \end{equation*}
    
    \FOR{$t \geq 0$}
    \STATE Choose $y_t = \tfrac{A_t}{A_{t+1}} x_t + \tfrac{a_{t+1}}{A_{t+1}}v_t$
    \STATE Compute $x_{t+1} \in \mathcal{N}_{L_p}(y_t)$
    \STATE Compute $a_{t+1} = A_{t+1}-A_{t}$
    \STATE Update $\psi_{t+1}(x) = \psi_t (z) + a_{t+1}[ f (x_{t+1}) + \la\nabla f(x_{t+1}), z - x_{t+1}\ra ].$
    \STATE Compute $v_{t+1} = \argmin_{z \in \mathbb{E}} \psi_{t+1} (z)$
    \ENDFOR
    \RETURN{$x_{T+1}$}
    \end{algorithmic}
\end{algorithm}

For the convergence results, the sequence $A_t$ should be defined in the following way
\begin{equation}
    \label{eq:theoretical_A_t}
    A_t = \tfrac{\nu_p}{L_p} t^{p+1}, \quad {where} \quad \nu_p = \tfrac{2p-1}{(p+1)(2p+1)}\cdot\tfrac{(p-1)!}{(2p)^p}.
\end{equation}
Then, $a_{t+1} =\tfrac{\nu_p}{L_p}\ls (t+1)^{p+1}-t^{p+1}\rs$. With such parameters, we can present the convergence theorem from \citep[Theorem 2.3]{nesterov2021superfast}
\begin{theorem}
    Let sequence $\lb x_t\rb_{t\geq 0}$ be generated by method \ref{alg:Superfast_app}. Then, for any $T \geq 1$, we have
    \begin{equation*}
        f(x_T)-f(x^{\ast})\leq O\ls \tfrac{L_p R^{p+1}}{T^{p+1}}\rs.
    \end{equation*}
\end{theorem}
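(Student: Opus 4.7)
}
The plan is to show that NATA fits into the classical Nesterov estimating-sequence framework, with the twist that the sequence $\{\tilde A_t\}$ is chosen adaptively. The backbone is the invariant
\begin{equation*}
\psi_{t}(v_{t}) \geq \tilde A_{t} f(x_{t}),
\end{equation*}
which is precisely what the inner loop enforces via its stopping criterion. Given this invariant, I would combine it with the upper bound $\psi_t(x^{\ast}) \leq \tfrac{1}{p+1}\|x_0 - x^{\ast}\|^{p+1} + \tilde A_t f(x^{\ast})$ that follows from convexity and the definition of $\psi_t$, so that $\tilde A_t f(x_t) \leq \psi_t(v_t) \leq \psi_t(x^{\ast})$ yields the standard estimate $f(x_t) - f(x^{\ast}) \leq \tfrac{R^{p+1}}{(p+1)\tilde A_t}$. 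The only nonclassical part is then lower-bounding $\tilde A_t$ and counting how often the inner loop iterates.

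Next I would show the inner loop always terminates: by construction, $\nu_t$ is decreased by factor $\theta$ per failed attempt but floored at $\nu^{\min} = \nu_p$, and Nesterov's analysis of Algorithm~\ref{alg:Superfast} (the classical choice of $\tilde A_t$ from \eqref{eq:theoretical_A_t}) guarantees that whenever $\nu_t = \nu_p$ the invariant $\psi_{t+1}(v_{t+1}) \geq \tilde A_{t+1} f(x_{t+1})$ automatically holds. Hence each outer step finishes in finitely many attempts. In particular, $\nu_t \geq \nu^{\min}$ always, so the increments $\tilde a_{t+1} = \tfrac{\nu_t}{M_p}((t+1)^{p+1}-t^{p+1})$ telescope to
\begin{equation*}
\tilde A_T \;\geq\; \tfrac{\nu^{\min}}{M_p}\, T^{p+1},
\end{equation*}
giving $f(x_T) - f(x^{\ast}) \leq \tfrac{M_p R^{p+1}}{(p+1)\nu^{\min} T^{p+1}}$, which yields the first term $O((M_p R^{p+1}/\varepsilon)^{1/(p+1)})$ in the complexity bound (since $\nu^{\min} = \nu_p$ is a fixed constant for each $p$).

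Finally, to obtain the $\log_\theta(\nu^{\max}/\nu^{\min})$ additive term, I would do a Lyapunov argument on $m_t := \log_\theta \nu_t$. Each outer step $t$ with $k_t$ inner attempts changes $m_t$ by at most $2 - k_t$ (one $+1$ from line~3, $k_t-1$ decrements of $-1$ from the failed applications of line~5, and the final $+1$ from line~12), clipped by the floor $m^{\min}$ and cap $m^{\max}$. Summing $\sum_{t=0}^{T_{\mathrm{outer}}-1}(m_{t+1}-m_t) = m_{T_{\mathrm{outer}}} - m_0 \in [m^{\min}-m^{\max},\,m^{\max}-m^{\min}]$ and rearranging bounds the total number of inner iterations by
\begin{equation*}
\textstyle\sum_{t} k_t \;\leq\; 2 T_{\mathrm{outer}} + \log_\theta(\nu^{\max}/\nu^{\min}) + O(1),
\end{equation*}
which combined with the required $T_{\mathrm{outer}} = O((M_p R^{p+1}/\varepsilon)^{1/(p+1)})$ delivers the claimed complexity. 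The main obstacle I expect is the bookkeeping in this last step: carefully absorbing the floor/cap clipping into the Lyapunov telescoping and making sure the $+1$ from line~3 and the $+1$ from line~12 are not double-counted, so that the $\log_\theta(\nu^{\max}/\nu^{\min})$ additive overhead is correctly extracted. The convergence rate itself is essentially the classical estimating-sequence derivation applied verbatim to the adapted $\{\tilde A_t\}$.
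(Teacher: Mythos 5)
You have written a proof for Theorem \ref{thm:NATA}, whereas the statement in question is the convergence theorem for the non-adaptive Algorithm \ref{alg:Superfast_app}, which the paper does not prove at all but simply imports from \citet{nesterov2021superfast}; that said, your argument contains the asked-for statement as the special case $\tilde A_t = A_t = \tfrac{\nu_p}{L_p}t^{p+1}$, and its backbone is exactly the route the paper takes in Appendix \ref{app:NATA}: the estimating-sequence invariant $\psi_t(v_t)\geq \tilde A_t f(x_t)$, the chain $\tilde A_t f(x_t)\leq\psi_t(v_t)\leq\psi_t(x^{\ast})\leq \tilde A_t f(x^{\ast})+\tfrac{1}{p+1}R^{p+1}$, the lower bound $\tilde A_T\geq\tfrac{\nu^{\min}}{M_p}T^{p+1}$, and a separate count of the inner line-search attempts yielding the additive $\log_\theta(\nu^{\max}/\nu^{\min})$ term (your potential argument on $\log_\theta\nu_t$ is a cleaner version of the paper's informal accounting). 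The one place you lean on a black box is the heart of the matter: establishing that the invariant propagates when $\nu_t$ reaches $\nu_p$. The paper spells this out via uniform convexity of $\|\cdot\|^{p+1}$, the per-step progress bound $\la\nabla f(x_{t+1}),y_t-x_{t+1}\ra\geq c_p\|\nabla f(x_{t+1})\|_{\ast}^{(p+1)/p}$ from \citep[Theorem 2.1]{nesterov2021superfast}, and a Young-type inequality, reducing to the verification $\tilde A_{t+1}c_p\geq\tfrac{p}{p+1}2^{(p-1)/p}\tilde a_{t+1}^{(p+1)/p}$; when $\nu_t=\nu_p$ one has $\tilde a_{t+1}=a_{t+1}$ but $\tilde A_{t+1}\geq A_{t+1}$ (earlier increments may have been larger), so Nesterov's verification transfers only because the larger aggregate makes the inequality easier --- a direction-of-inequality check you should state explicitly rather than leave implicit in ``Nesterov's analysis guarantees.'' With that check added, your argument is correct and matches the paper's.
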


\subsection{Nesterov Accelerated Tensor Method with $A_t$-Adaptation (NATA)}
\label{app:NATA}

In this subsection, we present the proof of Theorem \ref{thm:NATA}
 \begin{algorithm}[H]
  \caption{Nesterov Accelerated Tensor Method with $A_t$-Adaptation (NATA)} \label{alg:NATA_app}
 \begin{algorithmic}[1]
      \STATE \textbf{Input:} $x_0=v_0$ is starting point, constant $M_p$, total number of iterations $T$, $\tilde{A}_0=0$, $\nu^{\min}=\nu_p$, $\nu^{\max} \geq \nu_p$ is a maximal value of $\nu$, $\theta>1$ is a scaling parameter for $\nu$, and $\nu_0 = \nu^{\max}\theta$ is a starting value of $\nu$.
      \STATE Set objective function
      \begin{equation*}
          \psi_0(z) = \tfrac{1}{p+1}\|z - x_0\|^{p+1}
      \end{equation*}
    \FOR{$t \geq 0$}
        \REPEAT 
        \STATE $\nu_t = \max\lb\tfrac{\nu_t}{\theta},\nu_{\min}\rb$
        \STATE $\tilde{a}_{t+1} = \tfrac{\nu_t}{L_p} ((t+1)^{p+1} - t^{p+1})$ and $\tilde{A}_{t+1} = \tilde{A}_{t} + \tilde{a}_{t+1}$
        \STATE $y_t = \frac{\tilde{A}_t}{\tilde{A}_{t+1}} x_t + \frac{\tilde{a}_{t+1}}{\tilde{A}_{t+1}}v_t$
        \STATE $x_{t+1} = \mathcal{N}_{L_p}(y_t)$
        \STATE $\psi_{t+1}(z) = \psi_t (z) + \tilde{a}_{t+1}[ f (x_{t+1}) + \la \nabla f(x_{t+1}), z - x_{t+1}\ra]$
        \STATE $v_{t+1} = \argmin_{z \in \mathbb{E}} \psi_{t+1} (z)$
        \UNTIL{$\psi_{t+1}(v_{t+1}) \geq \tilde{A}_{t+1} f(x_{t+1})$}
        \STATE $\nu_{t+1} = \min\lb \nu_t\theta^2, \nu_{\max}\rb$
    \ENDFOR
    \RETURN{$x_{T+1}$}
    \end{algorithmic}
\end{algorithm}

\begin{theorem}(Copy of Theorem \ref{thm:NATA})
    For convex function $f$ with $L_p$-Lipschitz-continuous $p$-th derivative, to find $x_T$ such that $f(x_{T}) - f(x^{\ast})\leq \varepsilon$, it suffices to perform no more than $T\geq 1$ iterations of the Nesterov Accelerated Tensor Method with $A_t$-Adaptation (NATA) with $M_p\geq p L_p$ (Algorithm \ref{alg:NATA}), where
    \begin{equation}
        T = O\ls \ls\tfrac{L_p R^{p+1}}{\varepsilon}\rs^{\tfrac{1}{p+1}} + \log_{\theta}\ls\tfrac{\nu^{\max}}{\nu^{\min}}\rs \rs.
    \end{equation}
\end{theorem}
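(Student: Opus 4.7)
My plan is to adapt the classical estimating-sequence analysis of Nesterov's Accelerated Tensor Method (Algorithm \ref{alg:Superfast_app}) to the adaptive schedule produced by NATA, and then to separately account for the total line-search effort by a telescoping argument.

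First, I would verify that the inner REPEAT loop always terminates with the canonical invariant $\psi_{t+1}(v_{t+1}) \geq \tilde{A}_{t+1} f(x_{t+1})$ holding. The key observation is that NATA never lets $\nu_t$ drop below $\nu^{\min} = \nu_p$. At the worst-case value $\nu_t = \nu_p$, the coefficients $\tilde{a}_{t+1} = \tfrac{\nu_p}{L_p}((t+1)^{p+1} - t^{p+1})$ and $\tilde{A}_{t+1} = \tfrac{\nu_p}{L_p}(t+1)^{p+1}$ reduce exactly to the theoretical schedule \eqref{eq:theoretical_A_t} used by Algorithm \ref{alg:Superfast_app}, for which \citep[Theorem 2.3]{nesterov2021superfast} guarantees the invariant. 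Hence acceptance is triggered at the latest when $\nu_t$ reaches $\nu^{\min}$, so each outer iteration concludes after finitely many trials.

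Next, I would run the standard estimating-sequences bound with $\tilde{A}_t$ in place of $A_t$. Since $\psi_0(z) = \tfrac{1}{p+1}\|z-x_0\|^{p+1}$ and each $\psi_t$ is obtained by adding linear lower-supports of $f$, one has $\psi_T(z) \leq \tilde{A}_T f(z) + \tfrac{1}{p+1}\|z-x_0\|^{p+1}$ for every $z$. Combining with the invariant and plugging in $z = x^{\ast}$ gives
\begin{equation*}
\tilde{A}_T f(x_T) \;\leq\; \psi_T(v_T) \;\leq\; \psi_T(x^{\ast}) \;\leq\; \tilde{A}_T f(x^{\ast}) + \tfrac{R^{p+1}}{p+1},
\end{equation*}
so $f(x_T) - f(x^{\ast}) \leq \tfrac{R^{p+1}}{(p+1)\tilde{A}_T}$. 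Because every accepted $\nu_t^{\text{final}} \geq \nu^{\min} = \nu_p$ and $\tilde{A}_T = \sum_{t=0}^{T-1} \tfrac{\nu_t^{\text{final}}}{L_p}((t+1)^{p+1}-t^{p+1})$ telescopes, we get $\tilde{A}_T \geq \tfrac{\nu_p}{L_p} T^{p+1}$, which yields the classical outer-iteration count $T_{\text{out}} = O\!\left((L_p R^{p+1}/\varepsilon)^{1/(p+1)}\right)$.

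Finally, I would convert outer iterations into the ``total iterations'' count stated in the theorem by bounding the number of REPEAT trials. Let $K_t$ denote the number of trials at outer step $t$ and $\nu_t^{\text{final}}$ the accepted value. Since the next outer iteration begins with $\nu_{t+1}^{\text{start}} = \min\{\theta^2 \nu_t^{\text{final}}, \nu^{\max}\}$ and each trial divides by $\theta$, one has $K_{t+1} \leq 2 + \log_\theta(\nu_t^{\text{final}}/\nu_{t+1}^{\text{final}})$; summing telescopes to
\begin{equation*}
\textstyle\sum_{t=0}^{T_{\text{out}}-1} K_t \;\leq\; 2 T_{\text{out}} + \log_\theta(\nu^{\max}/\nu^{\min}) + K_0,
\end{equation*}
with $K_0 \leq 1 + \log_\theta(\nu^{\max}/\nu^{\min})$. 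Thus total work is $O(T_{\text{out}} + \log_\theta(\nu^{\max}/\nu^{\min}))$, matching the claim. The main subtlety will be making the first step fully rigorous, namely quoting Nesterov's invariant at $\nu = \nu^{\min}$ in a way that uses only quantities already present in the adaptive run ($y_t, x_{t+1}, v_{t+1}$ constructed with the current $\tilde{A}_{t+1}$); the telescoping and the conversion of the $\tilde{A}_T$ bound into an $\varepsilon$-complexity are routine bookkeeping.
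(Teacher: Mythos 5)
Your proposal follows essentially the same route as the paper: the same estimating-sequence chain $\tilde{A}_T f(x_T) \leq \psi_T(v_T) \leq \psi_T(x^{\ast}) \leq \tilde{A}_T f(x^{\ast}) + \tfrac{R^{p+1}}{p+1}$ with the adaptive $\tilde{A}_T \geq \tfrac{\nu_p}{L_p}T^{p+1}$, the same termination argument for the inner loop at $\nu_t = \nu^{\min} = \nu_p$, and the same $\log_\theta(\nu^{\max}/\nu^{\min}) + O(T)$ accounting of line-search trials. The one subtlety you flag is real but resolved in the paper exactly as you anticipate: one cannot cite the fixed-schedule Theorem 2.3 as a black box, so the paper reruns the per-step estimating-sequence induction with the adaptive coefficients, using $\tilde{A}_{t+1} \geq A_{t+1}$ (since every accepted $\tilde{a}_s \geq a_s$) together with $\tilde{a}_{t+1} = a_{t+1}$ at $\nu_t = \nu_p$ to verify $\tilde{A}_{t+1}c_p \geq \tfrac{p}{p+1}2^{(p-1)/p}\tilde{a}_{t+1}^{(p+1)/p}$ from Nesterov's original inequality.
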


\begin{proof}

Let us present the convergence analysis of Algorithm \ref{alg:NATA}. The proof is based on the proof from \citep{nesterov2021superfast}.

First of all, by convexity and definition of $\psi_t(x)$, it is easy to show that
\begin{equation}
    \label{superfast:psi}
    \psi_t (x^{\ast}) \leq \tilde{A}_t f (x^{\ast}) + \tfrac{1}{p+1}\|x^{\ast} - x_0\|^{p+1}.
\end{equation}
Now, let us assume that the condition on Line 10 is satisfied for every step. Then, we get 
\begin{equation}
    \label{eq:accelerated_key}
    \tilde{A}_t f(x_{t})\leq \psi_t(v_t)\leq \psi_t(x^{\ast})\leq \tilde{A}_t f(x^{\ast}) + \tfrac{1}{p+1}\|x^{\ast} - x_0\|^{p+1},
\end{equation}
where in the second inequality we use the definition of $v_t$. Next, by simple calculations, we get the convergence result
\begin{equation}
    \label{eq:accelerated_rate}
     f(x_{t}) - f(x^{\ast}) \leq \tfrac{\|x^{\ast} - x_0\|^{p+1}}{(p+1)\tilde{A}_t}.
\end{equation}
From that inequality, one can see that the bigger $\tilde{A}_t$ means the faster convergence. That is the reason, we want to have a more aggressive $\tilde{a}_t$ and start the search of $\nu$ from the maximal value. Now, we need to show that the condition in Line 10 is always can be satisfied.

Let us prove it by induction of the following relation:
\begin{equation}
    \label{superfast:induction_tesis}
    \psi^{*}_t = \psi_t(v_t) \geq \tilde{A}_t f(x_t), \quad  t \geq 0.
\end{equation}

For $t = 0$, we have $\psi^{*}_0 = 0$ and $A_0 = 0$. Hence, \eqref{superfast:induction_tesis} is valid.

Assume it is valid for some $t \geq 0$. Then,

\begin{gather*}
\psi^{*}_{t+1} = \psi_t(v_{t+1}) + \tilde{a}_{t+1} \ls f(x_{t+1}) + \la \nabla f (x_{t+1}), v_{t+1} - x_{t+1} \ra\rs\\
\geq
 \psi^{*}_t + \tfrac{1}{(p+1)2^{p-1}}\|v_{t+1} - v_t \|^{p+1} + \tilde{a}_{t+1} \ls f(x_{t+1}) + \la \nabla  f (x_{t+1}), v_{t+1} - x_{t+1} \ra\rs,
\end{gather*}
where the last inequality is coming from uniform convexity of $\|\cdot\|^{p+1}$.
Now, we can use the structure of the method in previous inequality and get
\begin{gather*}
    \psi^{*}_{t+1} - \tfrac{1}{(p+1)2^{p-1}}\|v_{t+1}- v_t \|^{p+1} \overset{\eqref{superfast:induction_tesis}}{\geq} \tilde{A}_t f(x_t) + \tilde{a}_{t+1} \ls f(x_{t+1}) + \la \nabla  f (x_{t+1}), v_{t+1} - x_{t+1} \ra\rs\\
    \geq \tilde{A}_{t+1} f (x_{t+1}) +  \langle \nabla f (x_{t+1}), \tilde{a}_{t+1}(v_{t+1} - x_{t+1}) + \tilde{A}_t (x_t - x_{t+1})\rangle \\
    =\tilde{A}_{t+1} f(x_{t+1}) + \langle  \nabla f (x_{t+1}), \tilde{a}_{t+1}(v_{t+1} - v_t) + \tilde{A}_{t+1}(y_t - x_{t+1})\rangle, 
\end{gather*}
where, for the second inequality, we use convexity and, for the last equality, we use the definition of $y_t$ from Line 6 of the Algorithm \ref{alg:NATA}.

Further, we use inequality $\frac{\alpha}{p+1} \tau^{p+1} - \beta \tau \geq -\frac{p}{p+1} \alpha^{-1/p} \beta^{(p+1)/p}, \quad \tau \geq 0$, for all
$x \in \mathbb{E}$ and we have
\begin{equation}
    \label{eq:acc_app_tensor_}
\tfrac{1}{(p+1)2^{p-1}} 
\| v_{t+1} - v_t\|^{p+1} + \tilde{a}_{t+1} \la \nabla f (x_{t+1}), v_{t+1} - v_t \ra \geq - \tfrac{p}{p+1} 2^{\tfrac{p- 1}{p}} \left( \tilde{a}_{t+1} \| \nabla f(x_{t+1}) \|_{*} \right)^{\frac{p+1}{p}}.
\end{equation}
Next, for $x_{t+1} \in \mathcal{N}_{L_p} (y_k)$, from \citep[Theorem 2.1]{nesterov2021superfast},
we get
$$ \langle \nabla f (x_{t+1}), y_t - x_{t+1} \rangle \geq c_p \| \nabla f (x_{t+1}) \|^{\frac{p+1}{p}}_*,$$
where  
$c_p=\lp\tfrac{2p-1}{2p(2p+1)}\tfrac{p!}{L_p}\rp^{1/p}$ for relative inexact $p$-th order solution.

Putting all these inequalities together, we obtain
\begin{gather*}
\psi^{*}_{t+1} \geq \tilde{A}_{t+1} f(x_{t+1}) - \tfrac{p}{p + 1} 2^{\tfrac{p-1}{p}} \ls \tilde{a}_{t+1}\|\nabla f(x_{t+1})\|_{*} \rs^{\tfrac{p+1}{p}}
+ \tilde{A}_{t+1} c_p \| \nabla f(x_{t+1})\|^{\tfrac{p+1}{p}}_{*} \\
 = \tilde{A}_{t+1} f(x_{t+1}) + \| \nabla f(x_{t+1})\|^{\tfrac{p+1}{p}}_{*} \ls \tilde{A}_{t+1}c_p - \tfrac{p}{p + 1} 2^{\tfrac{p - 1}{p}} \tilde{a}^{\tfrac{p+1}{p}}_{t+1} \rs.
\end{gather*}
Finally, by the choice of $\nu_t$ in Algorithm \ref{alg:NATA}, $\nu_t\geq \nu_p$ and $\tilde{a}_{t+1}\geq a_{t+1}$, where $a_{t+1}= \tfrac{\nu_p}{L_p} ((t+1)^{p+1} - t^{p+1})$ is the theoretical value of $a_{t+1}$. Hence, $\tilde{A}_{t+1}\geq A_{t+1}$, where $A_{t+1}=\tfrac{\nu_p}{L_p} (t+1)^{p+1}$ is the theoretical value of $A_{t+1}$. So, in the final inequality, we prove that there exists $\nu_t =\nu_p$ such that $$\tilde{A}_{t+1}c_p\geq A_{t+1}c_p \geq \tfrac{p}{p + 1} 2^{\tfrac{p - 1}{p}} a^{\tfrac{p+1}{p}}_{t+1},$$
where the last inequality holds from \citep[Equation 25]{nesterov2021superfast}. 
Thus, we have proved the induction step.

The search of $\nu_t$ takes maximal total of $\log_{\theta}\ls\tfrac{\nu^{\max}_t}{\nu^{\min}_t}\rs + T$ additional steps, where $\nu^{\max}_t = \max_{t\in [0;T]} \nu_t \leq \nu^{\max}$ and  $\nu^{\min}_t = \min_{t\in [0;T]} \nu_t \geq \nu^{\min}=\nu_p$. The $T$ term in the sum is coming from Line 11 in Algorithm \ref{alg:NATA}. If we want to make the Algorithm less aggressive, we can remove this Line then $\nu_t$ will only decrease. 

The total number of iterations hence is equal to $T = O\ls \ls\tfrac{L_p R^{p+1}}{\varepsilon}\rs^{\tfrac{1}{p+1}} + \log_{\theta}\ls\tfrac{\nu^{\max}_t}{\nu^{\min}_t}\rs \rs$, which finishes the proof.
\end{proof}

\subsection{Near-optimal Tensor methods and Hyperfast Second-order method}

\paragraph{Near-optimal Tensor methods.}

\citet{monteiro2013accelerated} demonstrated that the global convergence rate of second-order methods can be further improved from $O\ls\e^{-1/3} \rs$ to $O\ls\e^{-2/7} \log \ls1/\e\rs\rs$ . This improvement was achieved through the development of the Accelerated Hybrid Proximal Extragradient (A-HPE) framework, which, when combined with a trust-region Newton-type method, resulted in the Accelerated Newton Proximal Extragradient (A-NPE) method that achieves the improved rate.  A lower bound of $O\left(\e^{-2/7}\right)$ was established by \citet{arjevani2019oracle}, rendering that the A-NPE method is nearly optimal.

Near-optimal tensor methods~\citep{gasnikov2019optimal, bubeck2019near, jiang2019optimal}, with a convergence rate of $O\left(\e^{-2/(3p + 1)} \log \left(1/\e\right)\right)$, are based on the A-HPE framework. Similar to A-HPE, these tensor methods require an additional binary search procedure at each iteration. The cost of these procedures introduces an extra $O(\log(1/\e))$ factor in the overall convergence rate.

\begin{algorithm}
\caption{Inexact $p$-th order Near-optimal Accelerated Tensor Method \protect{\cite[Algorithm 1]{kamzolov2020near}}}\label{alg:Hyperfast}
 \begin{algorithmic}[1]
		\STATE \textbf{Input:}  $x_0 = v_0$ is starting point, constants $M_p$, $\gamma \in [0, 1)$, total number of iterations $T$, $A_0 = 0$.
		\STATE Set $A_0 = 0, x_0 = v_0$
		\FOR{ $t \geq 0$}
            \STATE Compute a pair $\lambda_{t+1} > 0$ and $x_{t+1}\in \R^n$ such that \label{line:pair}
            \begin{equation}
                \label{eq:pair_calc}
                 \frac{1}{2} \leq \lambda_{t+1} \frac{M_p \cdot \|x_{t+1} - y_t\|^{p-1}}{(p-1)!}  \leq \frac{p}{p+1} \,
            \end{equation}
            where
            \begin{equation}
            \label{prox_step}
                x_{t+1} \in \mathcal{N}_{p,M_p}^{\gamma}(y_t)
            \end{equation}
            and
            \begin{equation}
            \label{eq:hyper_a}
                a_{t+1} = \frac{\lambda_{t+1} + \sqrt{\lambda_{t+1}^2+4\lambda_{t+1}A_t}}{2} 
                \text{ , } 
                A_{t+1} = A_t + a_{t+1}
                \text{ , and } 
                y_t = \frac{A_t}{A_{t + 1}}x_t + \frac{a_{t+1}}{A_{t+1}} v_t \,.
           \end{equation}
		\STATE Update $v_{t+1} = v_t - a_{t+1} \nabla f(x_{t+1})$
		\ENDFOR
		\RETURN $y_{K}$ 
	\end{algorithmic}
\end{algorithm}

One version of the near-optimal tensor methods is presented in Algorithm~\ref{alg:Hyperfast}. This version was initially proposed by \citet{bubeck2019near} and later improved by \citet{kamzolov2020near}, who introduced the handling of inexact solution to subproblem~\eqref{prox_step}. Note that line~\eqref{line:pair} of Algorithm~\ref{alg:Hyperfast} requires finding the pair $\left(x_{t+1}, \lambda_{t+1}\right)$, which cannot be done explicitly. Specifically, $\lambda_{t+1}$ depends on $x_{t+1}$ via~\eqref{eq:pair_calc}, which in turn depends on $y_t$ through~\eqref{prox_step}. Furthermore, $y_t$ depends on $a_{t+1}$, which itself depends on $\lambda_{t+1}$ as per~\eqref{eq:hyper_a}. This recursive dependence implies that $\lambda_{t+1}$ relies on itself, making it impossible to solve in closed form.

To find the pair  $\left(x_{t+1}, \lambda_{t+1}\right)$, a binary search procedure is employed. Below, we provide the approach used by~\citet{bubeck2019near}. Let us denote $\theta = \tfrac{A_t}{A_{t+1}} \in [0, 1]$. Thus, both $y_t$ and $x_{t+1}$ depend on $\theta$,
\begin{equation*}
    y_t(\theta) := y_t \stackrel{\eqref{eq:hyper_a}}{=} \theta x_t + (1 - \theta)v_t, \quad x_{t+1}(\theta) := x_{t+1} \stackrel{\eqref{prox_step}}{=}\mathcal{N}_{p,M_p}^{\gamma}(y_t(\theta)).
\end{equation*}
Since $\lambda_{t+1} = \tfrac{a_{t+1}^2}{A_{t+1}}$, we have that $\lambda_{t+1} = \tfrac{(1-\theta)^2}{\theta}A_t$. Thus, in terms of $\theta$,~\eqref{eq:pair_calc} can be rewritten as
\begin{equation}
\label{eq:pair_binsearch}
    \frac{1}{2} \leq  \zeta(\theta)  \leq \frac{p}{p+1}, \quad \text{where} \quad \zeta(\theta)=\frac{(1-\theta)^2}{\theta}\frac{A_t M_p \cdot \|x_{t+1}(\theta) - y_t(\theta)\|^{p-1}}{(p-1)!}. 
\end{equation}
Note that $\zeta(0) \to +\infty$ and $\zeta(1) = 0$. Hence, one can use binary search to find $\theta$ such that~\eqref{eq:pair_binsearch} holds true. The complexity of this procedure is $O\ls \log(1/\e) \rs$, and a theoretical analysis of binary search procedure can be found in~\citep{bubeck2019near}. Below we present the total complexity of Algorithm~\ref{alg:Hyperfast}.
\begin{theorem}[\protect{\cite[Theorem 1]{kamzolov2020near}}] \label{theoremNATMI}
     For convex function $f$ with $L_p$-Lipschitz-continuous $p$-th derivative, to find $x_T$ such that   $f(x_T)  - f^* \leq \epsilon$, it suffices to perform no more than $T\geq 1$ iterations of Algorithm~\ref{alg:Hyperfast} with $H_p = \xi L_p$, where $\xi$ and $\gamma$ satisfy $1 \geq 2\gamma + \tfrac{1}{\xi (p+1)}$, and
     \begin{equation*}
        T = \tilde{O} \ls \frac{H_p R^{p+1}}{\e}\rs.
     \end{equation*}
\end{theorem}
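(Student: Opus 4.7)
The plan is to follow the Monteiro--Svaiter A-HPE template, lifted to a $p$-th order inexact proximal step with relative inexactness parameter $\gamma$. I would introduce the Lyapunov function
$$\Phi_t \,:=\, A_t\bigl(f(x_t) - f^*\bigr) + \tfrac{1}{2}\|v_t - x^*\|^2,$$
and prove by induction that $\Phi_{t+1} \leq \Phi_t$, which combined with $\Phi_0 = \tfrac{1}{2}R^2$ will give $f(x_T) - f^* \leq R^2/(2 A_T)$. The inductive step assembles three ingredients: expansion of $\|v_{t+1}-x^*\|^2$ through $v_{t+1} = v_t - a_{t+1}\nabla f(x_{t+1})$; convexity at $x_{t+1}$ together with the coupling $y_t = \tfrac{A_t}{A_{t+1}}x_t + \tfrac{a_{t+1}}{A_{t+1}}v_t$ from \eqref{eq:hyper_a} to rewrite $\langle\nabla f(x_{t+1}), v_t - x^*\rangle$ as a combination of $f(x^*)-f(x_{t+1})$ and $\langle\nabla f(x_{t+1}), y_t - x_{t+1}\rangle$; and the inexact prox bound $x_{t+1}\in\mathcal{N}_{p,M_p}^{\gamma}(y_t)$ from \eqref{prox_step} together with the lower bracket in \eqref{eq:pair_calc}, which yields a descent-type estimate
$$\langle\nabla f(x_{t+1}),\, y_t - x_{t+1}\rangle \,\geq\, c(\gamma)\,\lambda_{t+1}\|\nabla f(x_{t+1})\|_{*}^2.$$
The identity $a_{t+1}^2 = \lambda_{t+1}A_{t+1}$ from \eqref{eq:hyper_a} is designed precisely so that the residual $a_{t+1}^2\|\nabla f(x_{t+1})\|_{*}^2$ is absorbed by this term; this is where the precondition $1 \geq 2\gamma + \tfrac{1}{\xi(p+1)}$ is used.

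The next step is to lower bound $A_T$. From \eqref{eq:hyper_a} one has $a_{t+1}\geq \sqrt{\lambda_{t+1}A_t}$, so a standard summation gives $A_T \geq \tfrac{1}{4}\bigl(\sum_{t=1}^{T}\sqrt{\lambda_t}\bigr)^2$. The two brackets in \eqref{eq:pair_calc} pin $\lambda_{t+1}$ to within an absolute constant of $(p-1)!/\bigl(M_p\|x_{t+1}-y_t\|^{p-1}\bigr)$, while the Lyapunov inequality combined with Cauchy--Schwarz applied to $\sum_t a_{t+1}\|\nabla f(x_{t+1})\|_{*}\|x_{t+1}-y_t\|$ telescopes to bound $\sum_t\|x_{t+1}-y_t\|^{p+1}$ in terms of $R$. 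Plugging these into the summation inequality via a power-mean / Hölder step produces
$$A_T \,\geq\, c\, T^{(3p+1)/2}/H_p,$$
which inverted against $f(x_T)-f^*\leq R^2/(2A_T)$ delivers the stated iteration count. Each outer iteration still has to solve the implicit Line~\ref{line:pair}; since $\zeta(\theta)$ in \eqref{eq:pair_binsearch} is continuous with $\zeta(0^+)=+\infty$ and $\zeta(1)=0$, bisection on $(0,1)$ locates an admissible $\theta$ in $O(\log(1/\epsilon))$ calls to the base prox oracle \eqref{prox_step}, producing the $\tilde O(\cdot)$ logarithmic factor.

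The main obstacle is the lower bound on $A_T$: the per-step balance $\lambda_{t+1}\|x_{t+1}-y_t\|^{p-1}$ is only pinned to a constant band by \eqref{eq:pair_calc}, and a naive summation gives only $A_T = \Omega(T^2)$, which would merely reproduce the accelerated Nesterov rate for $p=2$. The $(3p+1)/2$ exponent requires exploiting \emph{both} brackets of \eqref{eq:pair_calc} jointly with the telescoped estimate of $\sum_t\|x_{t+1}-y_t\|^{p+1}$ coming from the Lyapunov inequality, tied together by a Hölder inequality of the right power. The secondary delicate point is that the relative inexactness $\gamma$ must be chosen so that the Lyapunov descent estimate remains valid \emph{and} so that \eqref{eq:pair_binsearch} remains feasible throughout the binary search; the condition $1 \geq 2\gamma + \tfrac{1}{\xi(p+1)}$ is precisely the calibration of $(\xi,\gamma)$ that makes both work simultaneously.
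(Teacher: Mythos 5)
This theorem is stated in the paper purely as an imported result (it is \cite[Theorem 1]{kamzolov2020near}); the paper gives no proof of it, so there is nothing internal to compare against. Your outline is, however, a faithful reconstruction of the argument used in the cited source and its predecessors (Monteiro--Svaiter, Bubeck et al.): the potential $A_t(f(x_t)-f^*)+\tfrac12\|v_t-x^*\|^2$, the coupling through $y_t$, the descent estimate extracted from $x_{t+1}\in\mathcal{N}^{\gamma}_{p,M_p}(y_t)$ together with the bracket \eqref{eq:pair_calc}, the bound $A_T\geq\tfrac14(\sum_t\sqrt{\lambda_t})^2$, and the H\"older step that converts the two-sided control of $\lambda_{t+1}\|x_{t+1}-y_t\|^{p-1}$ into the exponent $(3p+1)/2$ --- including the observation that the na\"ive bound only gives $A_T=\Omega(T^2)$. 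One imprecision worth fixing if you were to write this out: the Lyapunov inequality does not yield a bound on the unweighted sum $\sum_t\|x_{t+1}-y_t\|^{p+1}$; what it yields (by retaining the quadratic residuals in the potential decrease) is $\sum_t\frac{A_{t+1}}{\lambda_{t+1}}\|x_{t+1}-y_t\|^2\lesssim R^2$, hence an $A_{t+1}$-weighted bound $\sum_t A_{t+1}\|x_{t+1}-y_t\|^{p+1}\lesssim (p-1)!R^2/M_p$ after substituting the bracket, and those weights are exactly what the H\"older step needs to reach $A_T\gtrsim T^{(3p+1)/2}/(M_pR^{p-1})$. With that adjustment your sketch matches the standard proof.
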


\paragraph{Hyperfast Second-order method.}

Interestingly, the lower bound for second-order convex optimization, $O\left(\epsilon^{-2/7}\right)$, can be surpassed under higher smoothness assumptions on the objective. \citet{nesterov2021superfast} showed that, under the assumption of an $L_3$-Lipschitz third derivative, Algorithm~\ref{alg:Superfast} can be implemented using only a second-order oracle, with the third-order derivative approximated via finite gradient differences. This results in a second-order method with $O\left(\epsilon^{-1/4}\right)$ calls to the second-order oracle. The same idea can be applied to Algorithm~\ref{alg:Superfast}, improving the convergence rate of the second-order method to $\tilde{O}\left(\epsilon^{-1/5}\right)$~\citep{kamzolov2020near}.

\begin{theorem} [\protect{\cite[Theorem 2]{kamzolov2020near}}]
    For a convex function $f$ with an $L_3$-Lipschitz-continuous third derivative, to find $x_T$ such that $f(x_T)  - f^* \leq \epsilon$, it suffices to perform no more than $N_1 \geq 1$ gradient calculations and $N_2 \geq 1$ Hessian calculations in Algorithm~\ref{alg:Hyperfast} with BGDM as the subsolver for the subproblem~\eqref{prox_step}, $H_p = 3L_p/2$, $\gamma = 1/6$, and
    \begin{equation*} 
        N_{1}=\tilde{O}\left(\left( \frac{L_3 R^{4}}{\epsilon}\right)^{\frac{1}{5}}\log\left(\frac{G+H}{\epsilon}\right)\right),
    \end{equation*}
    \begin{equation*} 
        N_{2}=\tilde{O}\left(\left( \frac{L_3 R^{4}}{\epsilon}\right)^{\frac{1}{5}}\right),
    \end{equation*}
     where $G$ and $H$ are the uniform upper bounds for the norms of the gradients and Hessians computed at the points generated by the main algorithm.
\end{theorem}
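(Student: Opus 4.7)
The plan is to decouple the analysis into three layers: (i) the outer Near-optimal Tensor Method, instantiated at order $p=3$; (ii) the binary search that produces the pair $(x_{t+1},\lambda_{t+1})$ required by \eqref{eq:pair_calc}; and (iii) the inner Bregman Distance Gradient Method (BGDM) that solves each tensor subproblem \eqref{prox_step} using only first- and second-order information. The Hessian count $N_2$ is driven entirely by layer (i), while the gradient count $N_1$ absorbs the logarithmic overhead from layers (ii) and (iii).

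First, I would invoke Theorem~\ref{theoremNATMI} with $p=3$, $H_3 = \xi L_3$, $\xi = 3/2$, and $\gamma=1/6$. The admissibility condition $1 \geq 2\gamma + \tfrac{1}{\xi(p+1)} = \tfrac{1}{3} + \tfrac{1}{6} = \tfrac{1}{2}$ is satisfied, so the outer loop attains $f(x_T)-f^{\ast}\leq \varepsilon$ in $T = \tilde{O}\bigl((L_3 R^4/\varepsilon)^{1/5}\bigr)$ iterations. At each outer iteration we must exhibit a pair $(x_{t+1},\lambda_{t+1})$ satisfying \eqref{eq:pair_calc}. Following the binary search described around \eqref{eq:pair_binsearch}, this takes $O(\log(1/\varepsilon))$ trial values of $\theta$, and each trial reduces to one call of the inner solver for the subproblem \eqref{prox_step} at a candidate point $y_t(\theta)$.

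Next, I would argue that each call to the inner solver costs exactly one Hessian evaluation at $y_t(\theta)$ (used to build the eigenvalue decomposition \eqref{eq:hessian_evd} that supports the ray-search in \eqref{eq:dual_line_search}) plus a logarithmic number of gradient-type evaluations. The key ingredient is the BGDM analysis from Appendix~\ref{ap:tensor_subsolver}: the fourth-order auxiliary objective is relatively smooth and relatively strongly convex with respect to $\rho(h)=\tfrac12\nabla^2 f(y_t)[h]^2+\tfrac{L_3}{4}\|h\|^4$ with condition number $(\sqrt{2}+1)/(\sqrt{2}-1)$, giving linear convergence of BGDM. Crucially, the gradient of the subproblem requires the quantity $D^3 f(y_t)[h_k]^2$, which can be produced from gradient differences (as in \citep{nesterov2021superfast}), so each BGDM iteration consumes only gradient oracle calls, no third-order oracle. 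To meet the relative accuracy $\gamma=1/6$ required by \eqref{prox_step}, BGDM needs $O(\log((G+H)/\varepsilon))$ inner steps, where $G,H$ bound the gradients and Hessians along the trajectory and enter through the initial subproblem gap.

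Finally, I would multiply the layers: $N_2$ equals the outer iterations times the binary-search length (which contributes only a logarithmic factor absorbed by $\tilde{O}$), times one Hessian call per subproblem solve, yielding $N_2=\tilde{O}\bigl((L_3R^4/\varepsilon)^{1/5}\bigr)$. For $N_1$ we additionally multiply by the BGDM iteration count $O(\log((G+H)/\varepsilon))$, obtaining $N_1=\tilde{O}\bigl((L_3R^4/\varepsilon)^{1/5}\log((G+H)/\varepsilon)\bigr)$. The main obstacle, in my view, is not any single estimate but the careful bookkeeping needed to certify that (a) the finite-difference approximation of $D^3 f(y_t)[h_k]^2$ stays within the inexactness budget compatible with $\gamma=1/6$ throughout BGDM, and (b) the bounds $G$ and $H$ on trajectory gradients and Hessians genuinely control the BGDM warm-start gap so that only a $\log((G+H)/\varepsilon)$ factor appears; these are the places where the hidden constants of $\tilde{O}$ have to be chosen consistently across the three layers.
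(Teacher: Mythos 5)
Your proposal is correct and follows essentially the same route as the cited source (the paper itself imports this theorem from \citet{kamzolov2020near} without reproving it): instantiate Theorem~\ref{theoremNATMI} at $p=3$ with $\xi=3/2$, $\gamma=1/6$ (the admissibility check $1/3+1/6\leq 1$ is right), absorb the $O(\log(1/\epsilon))$ binary-search trials into $\tilde{O}$, and charge one Hessian plus $O(\log((G+H)/\epsilon))$ gradient-difference BGDM steps per subproblem solve. The accounting of where each factor in $N_1$ and $N_2$ originates matches the paper's Appendix~\ref{ap:tensor_subsolver} machinery, so no gap to report.
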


\subsection{Proximal Point Method with Segment Search}

Another approach for constructing near-optimal tensor methods involves high-order proximal-point type methods~\citep{nesterov2023inexact, nesterov2021auxiliary}, which are based on the $p$-th-order proximal-point operator:
\begin{equation}
    \label{eq:prox_point}
    \text{prox}_{p, H}(y) = \argmin \limits_{x \in \E} \lb f_{y, p, H}(x) := f(x) + \tfrac{H}{p+1}\|x - y\|^{p+1}\rb.
\end{equation}
\citet{nesterov2023inexact} demonstrated that using a single step of a $p$-th-order tensor method to solve~\eqref{eq:prox_point} results in a convergence rate of $O(\epsilon^{-1/p})$, and moreover, this approach can be accelerated to achieve a rate of $O(\epsilon^{-1/(p+1)})$. Another significant contribution of~\citet{nesterov2023inexact} is the introduction of a proximal-point operator with segment search:
\begin{equation}
    \label{eq:prox_point_ss}
    \text{Sprox}_{p, H}(y, u) = \argmin \limits_{x \in \E, ~\tau \in [0, 1]} \lb f(x) + \tfrac{H}{p+1}\|x - y - \tau u\|^{p+1}\rb.
\end{equation}
Assuming that~\eqref{eq:prox_point_ss} can be solved exactly, \citet{nesterov2023inexact} showed that  convergence rate of  $O(\e^{-2/(3p+1)})$ can be achieved via different acceleration scheme. 

A more practical algorithm was introduced in~\citep{nesterov2021auxiliary}. Following~\citet{nesterov2023inexact}, the authors assumed that the problem~\eqref{eq:prox_point} can be solved under the following approximate condition: 
\begin{equation*}
    \ACal_{p, H}^\gamma (y) = \lb x\in \E: \|\nabla f_{y, p, H}(x)\|_* \leq \beta \|\nabla f(x)\|_* \rb,
\end{equation*}
where $\gamma \in [0, 1)$ is a tolerance parameter. Furthermore, a specific approach for approximating the solution to subproblem~\eqref{eq:prox_point_ss} was proposed. The resulting method, called the Inexact $p$-th-order Proximal Point Method with Segment Search, is presented in Algorithm~\ref{alg:PPSS}. Lines~\ref{line:ppss_start}-\ref{line:ppss_end} of Algorithm~\ref{alg:PPSS} detail the steps for the approximate solution of~\eqref{eq:prox_point_ss}.

\begin{algorithm}
\caption{Inexact $p$-th-order Proximal Point Method with Segment Search~\cite[Method (3.6)]{nesterov2021auxiliary} }\label{alg:PPSS}
 \begin{algorithmic}[1]
    \STATE \textbf{Input:} $x_0 = v_0$ is starting point, constants $H>0$, $\gamma \in [0, 1)$, total number of iterations $T$, $A_0 = 0$.
    \
    \FOR{$t \geq 0$}
        \STATE Set $u_t = v_t - x_t$. 
        \STATE Compute $x_t^0 \in \ACal_{p, H}^\gamma (x_t)$.
        \IF{$\la \nabla f(x_t^0), u_t \ra \geq 0$,} \label{line:ppss_start}
            \STATE Define $\phi_t(z) = f(x_t^0) + \la \nabla f(x_t^0), z - x_t^0 \ra$, $x_{t+1} =x_t^0$, $g_t = \|\nabla f(x_t^0)\|_*$.
        \ELSE
            \STATE Compute $x_t^1 \in \ACal_{p, H}^\gamma (v_t)$.
            \IF{$\la \nabla f(x_t^1), u_t \ra \leq 0$,}
                \STATE Define $\phi_t(z) = f(x_t^1) + \la \nabla f(x_t^1), z - x_t^1 \ra$, $x_{t+1} =x_t^1$, $g_t = \|\nabla f(x_t^1)\|_*$. 
            \ELSE
                \STATE \label{line:bs} Find values $0 \leq \tau_t^1 \leq \tau_t^2 \leq 1$ with points $w_t^1 \in \ACal_{p, H}^\gamma(x_t + \tau_t^1u_t)$ and \\
                $w_t^2 \in \ACal_{p, H}^\gamma(x_t + \tau_t^2u_t)$ satisfying
                \begin{equation*}
                    \beta_t^1 \leq 0 \leq \beta_t^2, \quad \text{and} \quad \alpha_t(\tau_t^1 - \tau_t^2)\beta_t^1 \leq \tfrac{1}{2}\left[ \tfrac{1 - \gamma}{H}\right]^{1/p}g_t^{\frac{p+1}{p}},
                \end{equation*}
                where $\beta_t^1 = \la \nabla f(w_t^1), u_t \ra$, $\beta_t^2 = \la \nabla f(w_t^2), u_t \ra$, $\alpha_t = \tfrac{\beta_t^2}{\beta_t^2 - \beta_t^1} \in [0, 1]$, and
                \begin{equation*}
                    g_t= \left[ \alpha_t\|\nabla f(w_t^1)\|_{*}^{\frac{p+1}{p}} + (1 - \alpha_t)\|\nabla f(w_t^2)\|_{*}^{\frac{p+1}{p}} \right]^{\tfrac{p}{p+1}}.
                \end{equation*}
                Set 
                \begin{gather*}
                    \phi_t(z) = \alpha_t\ls f(w_t^1) + \la \nabla f(w_t^1), z - w_t^1 \ra\rs + (1 - \alpha_t)\ls f(w_t^2) + \la \nabla f(w_t^2), z - w_t^2 \ra\rs, \\
                    x_{t+1} = \alpha_t w_t^1 + (1 - \alpha_t)w_t^2.
                \end{gather*}
            \ENDIF
        \ENDIF \label{line:ppss_end}
        \STATE Compute $a_{t+1} > 0$ from equation $\frac{a_{t+1}^2}{A_t + a_{t+1}} = \frac{1}{2}\left[ \frac{1 - \gamma}{H}\right]^{1/p}g_t^{\frac{1-p}{p}}$
        \STATE Set $A_{t+1} = A_t + a_{t+1}$ and update $\psi_{t + 1}(z) = \psi_{t}(z) + a_{t+1}\phi_t(z)$
        \STATE Set $v_{t+1} = \argmin \limits_{z \in \E} \psi_{t+1}(z)$
    \ENDFOR
    \RETURN{$x_T$}
    \end{algorithmic}
\end{algorithm}

\begin{theorem}[\protect{\cite[Theorem 2]{nesterov2021auxiliary}}]
    For smooth convex function $f$ to find $x_T$ such that   $f(x_T)  - f^* \leq \epsilon$, it suffices to perform no more than $T\geq 1$ iterations of Algorithm~\ref{alg:PPSS}, where
	\begin{equation*}
		T = O\ls \left[\frac{HR^{p+1}}{\e}\right]^{\tfrac{2}{3p+1}} \rs.
	\end{equation*}
\end{theorem}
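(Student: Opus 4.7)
The plan is to adapt the Nesterov estimating-sequences acceleration scheme to the proximal-point-with-segment-search template, following the Monteiro-Svaiter pattern that delivers the near-optimal rate $T^{-(3p+1)/2}$.

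First I would establish, by induction on $t$, the central invariant
\begin{equation*}
A_t f(x_t) \leq \psi_t^\ast := \min_{z \in \E} \psi_t(z).
\end{equation*}
The base case is trivial since $A_0 = 0$. For the inductive step, use the uniform convexity of the prox-function $\tfrac{1}{p+1}\|z - x_0\|^{p+1}$, the update $\psi_{t+1} = \psi_t + a_{t+1}\phi_t$, and that $\phi_t$ is by construction an affine minorant of $f$ (in each of the three branches on Lines~\ref{line:ppss_start}-\ref{line:ppss_end}). Combined with $\psi_T(x^\ast) \leq A_T f^\ast + \tfrac{R^{p+1}}{p+1}$ (convexity plus definition of $\psi_0$), this yields the canonical bound $f(x_T) - f^\ast \leq \tfrac{R^{p+1}}{(p+1)A_T}$, so the whole proof reduces to lower bounding $A_T$.

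Second, I would analyze a single step via the inexactness condition $w \in \ACal_{p,H}^{\gamma}(y)$. By the first-order optimality of the regularized auxiliary problem and the structure of the $(p+1)$-th power regularizer, one obtains the prox descent inequality
\begin{equation*}
\langle \nabla f(w), y - w\rangle \;\geq\; \left[\tfrac{1-\gamma}{H}\right]^{1/p} \|\nabla f(w)\|_*^{(p+1)/p},
\end{equation*}
which is the $p$-th-order analogue of the classical gradient-mapping descent lemma. The segment-search case combines two such inequalities at $w_t^1$ and $w_t^2$ through the convex coefficient $\alpha_t$; the sign condition $\beta_t^1 \leq 0 \leq \beta_t^2$ on Line~\ref{line:bs} is exactly what makes the convex combination $x_{t+1} = \alpha_t w_t^1 + (1-\alpha_t) w_t^2$ behave as an approximate proximal step along the segment $\{x_t + \tau u_t : \tau \in [0,1]\}$, so that the aggregated surrogate $\phi_t$ admits the descent bound in terms of $g_t^{(p+1)/p}$ calibrated to the choice $\tfrac{a_{t+1}^2}{A_{t+1}} = \tfrac{1}{2}\bigl[\tfrac{1-\gamma}{H}\bigr]^{1/p} g_t^{(1-p)/p}$. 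This calibration is the engine that drives the acceleration: it converts prox-descent at $x_{t+1}$ into growth of $A_{t+1}$ while preserving the invariant $A_{t+1} f(x_{t+1}) \leq \psi_{t+1}^\ast$.

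Third, I would lower bound $A_T$ by the standard Monteiro-Svaiter algebra. Rewriting the defining equation for $a_{t+1}$ and summing the inequality $A_{t+1} - A_t = a_{t+1} \geq c\, H^{-1/(p+1)} A_{t+1}^{1/2} g_t^{(1-p)/(2p)}$ on one side against the telescoping aggregate $\sum_{t} a_{t+1} g_t^{(p+1)/p} \leq \psi_T^\ast / A_T \cdot A_T = O(R^{p+1})$ (obtained by bounding $\psi_T^\ast \leq \psi_T(x^\ast)$) on the other, a Hölder inequality with exponents $\tfrac{3p+1}{p+1}$ and $\tfrac{3p+1}{2p}$ gives $A_T = \Omega\bigl(T^{(3p+1)/2} H^{-2/(p+1)} R^{-(p-1)}\bigr)$ up to constants. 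Substituting into $f(x_T) - f^\ast \leq \tfrac{R^{p+1}}{(p+1) A_T}$ and inverting the inequality for a prescribed $\epsilon$ produces the claimed complexity $T = O\bigl([HR^{p+1}/\epsilon]^{2/(3p+1)}\bigr)$.

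The main obstacle is the segment-search branch (Line~\ref{line:bs}): one must verify that a pair $(\tau_t^1, \tau_t^2)$ meeting the precision condition $\alpha_t(\tau_t^1-\tau_t^2)\beta_t^1 \leq \tfrac{1}{2}[\tfrac{1-\gamma}{H}]^{1/p} g_t^{(p+1)/p}$ can be found in cost independent of $\epsilon$, and that the resulting $\phi_t$ still globally minorizes $f$ with sufficient slack to carry through the invariant. The standard route is to show $\tau \mapsto \langle \nabla f(\ACal^\gamma_{p,H}(x_t + \tau u_t)), u_t\rangle$ is monotone-like with controlled variation on $[0,1]$, so a bisection shrinks the relevant bracket at a geometric rate matching the required right-hand-side tolerance. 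Once this is verified, the remainder is bookkeeping within the estimating-sequence framework described above.
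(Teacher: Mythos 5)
First, a framing point: the paper does not prove this statement at all --- it is imported verbatim as \cite[Theorem 2]{nesterov2021auxiliary}, so there is no in-paper proof to match. Your architecture (estimating sequences with the invariant $A_t f(x_t)\leq\psi_t^\ast$, a prox-descent inequality from the relative-accuracy condition $w\in\mathcal{A}_{p,H}^{\gamma}(y)$, and a Monteiro--Svaiter/H\"older argument to lower bound $A_T$) is indeed the architecture of Nesterov's actual proof, and your step-two descent inequality is correct up to the constant (one gets $(1-2\gamma)\left[\tfrac{1-\gamma}{H}\right]^{1/p}\|\nabla f(w)\|_*^{(p+1)/p}$, which is why the analysis needs $\gamma<1/2$).

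The genuine gap is in step three: your exponents are internally inconsistent, which shows the Monteiro--Svaiter algebra has not actually been carried through. Combining your claimed bound $f(x_T)-f^\ast\leq\frac{R^{p+1}}{(p+1)A_T}$ with your claimed growth $A_T=\Omega\bigl(T^{(3p+1)/2}H^{-2/(p+1)}R^{-(p-1)}\bigr)$ gives $f(x_T)-f^\ast=O\bigl(H^{2/(p+1)}R^{2p}\,T^{-(3p+1)/2}\bigr)$ and hence $T=O\bigl((H^{2/(p+1)}R^{2p}/\epsilon)^{2/(3p+1)}\bigr)$, which is \emph{not} $O\bigl((HR^{p+1}/\epsilon)^{2/(3p+1)}\bigr)$ --- the powers of both $H$ and $R$ are wrong. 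The source of the mismatch is that you paired a degree-$(p+1)$ prox function $\psi_0(z)=\tfrac{1}{p+1}\|z-x_0\|^{p+1}$ with the step-size calibration $\tfrac{a_{t+1}^2}{A_{t+1}}=\tfrac12\left[\tfrac{1-\gamma}{H}\right]^{1/p}g_t^{(1-p)/p}$; that calibration (and the resulting H\"older pairing of $\sum\lambda_{t+1}^{1/2}$ against the telescoped descent $\sum a_{t+1}g_t^{(p+1)/p}$) is tailored to a \emph{quadratic} estimating function, for which the canonical bound is $f(x_T)-f^\ast\leq\frac{R^2}{2A_T}$ and the required growth is $A_T=\Omega\bigl(T^{(3p+1)/2}H^{-1}R^{-(p-1)}\bigr)$ --- that combination does reproduce the stated complexity. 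With the $(p+1)$-prox, the induction step instead forces the relation $a_{t+1}^{(p+1)/p}\lesssim A_{t+1}H^{-1/p}$, which only yields the slower $T^{-(p+1)}$ rate; so the choice of prox function is not cosmetic here. (Your telescoping bound ``$\sum_t a_{t+1}g_t^{(p+1)/p}\leq\psi_T^\ast/A_T\cdot A_T=O(R^{p+1})$'' is also garbled: $\psi_T^\ast$ itself is of order $A_Tf^\ast+R^{p+1}/(p+1)$, and what is actually $O(R^{p+1})$ (or $O(R^2)$ in the quadratic case) is the accumulated slack after subtracting $A_Tf(x_T)$.) Finally, your closing hope that the segment search on Line~\ref{line:bs} costs a number of bisection steps independent of $\epsilon$ is not what the cited analysis gives: each bisection costs $O\bigl(\log(HD^{p+1}/\epsilon)\bigr)$ evaluations of $\mathcal{A}_{p,H}^{\gamma}$, which inflates the oracle complexity by a logarithmic factor but leaves the outer iteration count $T$ in the theorem unchanged.
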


Line~\ref{line:bs} requires additional bisection search with complexity of $O \ls \frac{H D^{p+1}}{\e}\rs$~\cite[Theorem 4]{nesterov2021auxiliary}. This results in the following upper bound for the number of evaluations of $w \in \ACal_{p, H}^\gamma(x)$ during the execution of Algorithm~\ref{alg:PPSS}  $O \ls \left[\frac{HD^{p+1}}{\e}\right]^{\frac{2}{3p+1}}\log\frac{H D^{p+1}}{\e}\rs$. 

Under the additional assumption of an $L_p$-Lipschitz continuous $p$-th derivative of $f$, the inclusion $w \in \ACal_{p, H}^\gamma(x)$ can be achieved by performing one inexact tensor step with specific choice of parameters $\beta$ and $M_p$: $w \in \mathcal{N}_{p, M_p}^\beta(x)$~\cite[Section 3]{nesterov2023inexact}~\cite[Section 5.1]{nesterov2021auxiliary}. This makes Algorithm~\ref{alg:PPSS}  a near-optimal tensor method, comparable to~\citep{gasnikov2019near,bubeck2019near, jiang2019optimal}. However, it differs in nature: while the latter methods are based on A-NPE-type approaches, Algorithm~\ref{alg:PPSS} follows an interior-point-type framework.

For the case when $p=3$, the tensor step can be efficiently performed using BDGM in $O\ls \log 1/\e\rs$ iterations. As demonstrated in~\citep{nesterov2021superfast, kamzolov2020near}, a second-order implementation of a third-order tensor method can be achieved by approximating the third-order derivative using finite gradient differences. However, in practice, this approximation may suffer from numerical instability. For Algorithm~\ref{alg:PPSS} another approach is available: the interior-point subproblem~\eqref{eq:prox_point} can be solved using a second-order method~\citep{nesterov2021auxiliary}, which provides a more reliable alternative to finite gradient differences.  Under the assumption of an $L_3$-Lipschitz continuous third derivative of $f$, Algorithm~\ref{alg:PPSS} achieves  convergence $\tilde{O} \ls \e^{-1/5}\rs$.

\subsection{Optimal}

An Optimal Tensor Method was recently proposed by~\citet{kovalev2022first, carmon2022optimal}, improving upon the convergence of near-optimal tensor methods~\citep{gasnikov2019optimal, bubeck2019near, jiang2019optimal}. The convergence rate was enhanced from $O \ls \e^{-2/(3p+1)} \log\ls1/\e\rs\rs$ to $O \ls \e^{-2/(3p+1)}\rs$, matching the lower bound $\Omega \ls \e^{-2/(3p+1)} \rs$~\citep{arjevani2019oracle}. Similar to near-optimal methods, the Optimal Tensor Method is based on the A-HPE framework proposed by~\citet{monteiro2013accelerated}. 

Before describing the Optimal Tensor Method, we introduce some necessary notations. Let $\Phi_p^g$ denote the $p$-th order Taylor approximation of the function $g$:
\begin{equation}
    \Phi_p^g (x, y) = g(y) + \sum \limits_{k=1}^p \tfrac{1}{k!}D^k g(y)[x-y]^k.
\end{equation}
Additionally, note that $\Phi_p^f (x, y) = \Phi_p(x, y)$ as defined in~\eqref{eq_taylor}. We also define the function $g_\lambda (x, y) = f(x) + \frac{1}{2\lambda}\|x-y\|^2$.

The main distinction from near-optimal methods lies in the procedure used to find the pair $(x_{t+1}, \lambda_{t+1})$. Instead of first computing $x_{t+1}$ and then using a binary search to determine $\lambda_{t+1}$, as done in previous approaches, \citet{kovalev2022first} first select the parameter $\lambda_{t+1}$ and then compute $x_{t+1}$. This procedure, known as the Tensor Extragradient Method, is shown in lines~\ref{line:optimal_sub_start}-~\ref{line:optimal_sub_end} of Algorithm~\ref{alg:Optimal}. This method converges in a constant number of iterations, leading to the optimal convergence rate of $O \ls \e^{-2/(3p+1)}\rs$ for Algorithm~\ref{alg:Optimal}.

\begin{algorithm}
\caption{Optimal Tensor Method~\cite[Algorithm 4]{kovalev2022first} }\label{alg:Optimal}
 \begin{algorithmic}[1]
    \STATE \textbf{Input:} $x_0 = v_0$ is starting point, constants $M_p$, $\sigma \in (0, 1)$, total number of iterations $T$, $A_0 = 0$, sequence $a_{t} = \nu t^{(3p-1)/2}$ for some $\nu > 0$.
    \FOR{$t \geq 0$}
        \STATE $A_{t+1} = A_t + a_{t+1},~\lambda_{t+1} = \frac{a_{t+1}^2}{A_{t+1}}$
        \STATE $y_t = \frac{A_t}{A_{t+1}}x_t + \frac{a_{t+1}}{A_{t+1}}v_t$
        \STATE $y_t^0 = y_t,~k=0$
        \REPEAT \label{line:optimal_sub_start}
            \STATE $x_t^k = \argmin\limits_{y \in \E} \lb \Phi_p^{g_{\lambda_t}(\cdot, y_t)}(y, y_t^k) + \frac{pM_p}{(p+1)!}\|y - y_t^k\|^{p+1}\rb$
            \STATE $y_t^{k+1} = y_t^k - \ls \frac{M_p\|x^k_t - y^k_t\|^{p-1}}{(p-1)!}\rs^{-1} \nabla g_{\lambda_{t+1}}(x^k_t, y_t)$
            \STATE $k = k + 1$
        \UNTIL $\|\nabla g_{\lambda_{t+1}}(x^k_t, y_t)\| \leq \sigma \lambda_t^{-1}\|x_t^k - y_t\| $ \label{line:optimal_sub_end}
        \STATE $x_{t+1} = x_t^{k-1}$
        \STATE Update $v_{t+1} = v_t - a_{t+1}\nabla f(x_{t+1})$
    \ENDFOR
    \RETURN{$x_T$}
    \end{algorithmic}
\end{algorithm}

\begin{theorem}[\protect{\cite[Theorem 5]{kovalev2022first}}]
    Let $M_p = L_p$ and $\sigma = 1/2$. Let 
    \begin{gather*}\label{ot:eta}
	   \nu = \left(\frac{(3p+1)^pC_p(M_p,\sigma)R^{p-1}}{2^p\sqrt{p}}\cdot 	 \left(\frac{1+\sigma}{1-\sigma}\right)^{\frac{p-1}{2}}\right)^{-1},\\
    \text{where}\quad C_p(M_p,\sigma) = \frac{p^pM_p^p(1+\sigma^{-1})}{p!(pM - L_p)^{p/2}(pM_p+L_p)^{p/2-1}}.
    \end{gather*}
    Then, for convex function $f$ with $L_p$-Lipschitz-continuous $p$-th derivative, to find $x_T$ such that   $f(x_T)  - f^* \leq \epsilon$, it suffices to perform no more than $T\geq 1$ iterations of Algorithm~\ref{alg:Optimal}, where
	\begin{equation*}
		T = 5D_p\cdot\left(
		{L_pR^{p+1}}/{\epsilon}
		\right)^{\frac{2}{3p+1}} + 7,
	\end{equation*}
	with $D_p$ is defined as follows:
	\begin{equation*}
		D_p = 	\left(
		\frac{3^{\frac{p+1}{2}}(3p+1)^{p+1}p^p(p+1)}{2^{p+2}\sqrt{p}p!(p^2 - 1)^{\frac{p}{2}}}
		\right)^{\frac{2}{3p+1}}.
	\end{equation*}
\end{theorem}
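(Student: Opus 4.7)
The plan is to follow the Monteiro--Svaiter A-HPE framework \citep{monteiro2013accelerated}, replacing the classical simultaneous binary search over the pair $(\lambda_{t+1}, x_{t+1})$ by the deterministic Tensor Extragradient inner loop. The proof splits into (i) an outer potential argument showing $f(x_T)-f^\ast \leq R^2/(2A_T)$ whenever the iterates satisfy the HPE relative-error condition $\|\nabla g_{\lambda_{t+1}}(x_{t+1},y_t)\|_\ast \leq \sigma \lambda_{t+1}^{-1}\|x_{t+1}-y_t\|$, and (ii) an inner analysis showing that, with $\sigma = 1/2$ and the specific $\nu$ in the theorem, the Tensor Extragradient loop terminates after a uniformly bounded number of iterations and yields a pair $(x_{t+1}, \lambda_{t+1})$ compatible with the predetermined schedule $a_t = \nu t^{(3p-1)/2}$.

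For the outer part I would introduce the estimate sequence
\[
\psi_t(z) = \tfrac{1}{2}\|z - x_0\|^2 + \sum_{k=1}^{t} a_k\bigl[ f(x_k) + \langle \nabla f(x_k), z - x_k \rangle \bigr],
\]
matching the dual averaging update $v_{t+1} = v_t - a_{t+1}\nabla f(x_{t+1})$. Using convexity, the relation $\lambda_{t+1} = a_{t+1}^2/A_{t+1}$, and the HPE inequality, the standard A-HPE manipulation yields the invariant $A_t f(x_t) \leq \psi_t(v_t) \leq \psi_t(x^\ast) \leq A_t f^\ast + \tfrac{1}{2} R^2$, hence $f(x_T) - f^\ast \leq R^2/(2A_T)$. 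A direct summation of $a_t = \nu t^{(3p-1)/2}$ gives $A_T \geq c\,\nu T^{(3p+1)/2}$ for a universal constant $c$, reducing the task to quantifying the largest admissible $\nu$.

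The inner analysis controls admissibility of $\nu$. Under $M_p = L_p$, the auxiliary function $g_{\lambda_{t+1}}(\cdot, y_t) = f(\cdot) + \tfrac{1}{2\lambda_{t+1}}\|\cdot - y_t\|^2$ is strongly convex and, along the extragradient path, relatively smooth with constants proportional to $pM_p \pm L_p$ with respect to the Bregman divergence of $\tfrac{pM_p}{(p+1)!}\|\cdot\|^{p+1}$. A one-step contraction argument shows that the extragradient correction $y_t^{k+1}=y_t^k-(M_p\|x_t^k-y_t^k\|^{p-1}/(p-1)!)^{-1}\nabla g_{\lambda_{t+1}}(x_t^k, y_t)$ reduces the relative residual $\|\nabla g_{\lambda_{t+1}}(x_t^k, y_t)\|_\ast/(\lambda_{t+1}^{-1}\|x_t^k - y_t\|)$ by a constant factor depending on $(p, \sigma)$. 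Therefore $O(1)$ inner iterations suffice to reach the HPE tolerance $\sigma = 1/2$, provided the step-size condition $\lambda_{t+1} M_p \|x_{t+1} - y_t\|^{p-1}/(p-1)! \leq c_p(\sigma)$ is maintained. Translating this constraint via $\|x_{t+1} - y_t\| \leq O(R)$ into one on $\lambda_{t+1} = a_{t+1}^2/A_{t+1}$ pins down the closed form of $\nu$; substituting into $R^2/(2A_T) \leq \varepsilon$ and inverting for $T$ yields the claimed $T = 5D_p(L_p R^{p+1}/\varepsilon)^{2/(3p+1)} + 7$, with the additive $+7$ absorbing the small-$T$ regime where the asymptotic summation is inaccurate.

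The main obstacle is the contraction statement for the inner Tensor Extragradient loop: establishing that it behaves as a uniform contraction in the relative-residual metric with an explicit constant. This rests on a careful decomposition of $\nabla g_{\lambda_{t+1}}(x_t^k, y_t)$ into a model residual (controlled by Bregman optimality of $x_t^k$) and a Taylor remainder (controlled by $L_p$), combined with the extragradient identity matching the step size to the inverse-Hessian surrogate $M_p\|x_t^k - y_t^k\|^{p-1}/(p-1)!$. Tracking the contraction factor through these estimates reproduces the stated value of $\nu$ and thus $D_p$; the remaining outer-potential algebra and the summation $A_T \geq c\,\nu T^{(3p+1)/2}$ are comparatively routine.
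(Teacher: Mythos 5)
This theorem is stated in the paper only as a citation of \citet[Theorem~5]{kovalev2022first}; the paper gives no proof of its own, so I am comparing your sketch against the argument in that reference. Your outer layer is correct and matches it: the dual-averaging estimate sequence, the invariant $A_t f(x_t)\leq \psi_t(v_t)\leq \psi_t(x^\ast)\leq A_t f^\ast+\tfrac12 R^2$ under the relative-error (HPE) condition with $\sigma<1$, and the summation $A_T\geq c\,\nu\,T^{(3p+1)/2}$ are exactly the standard A-HPE backbone used there.

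The genuine gap is in your inner analysis. You claim the Tensor Extragradient loop terminates in $O(1)$ iterations per outer step because the step-size condition $\lambda_{t+1}M_p\|x_{t+1}-y_t\|^{p-1}/(p-1)!\leq c_p(\sigma)$ ``is maintained,'' and you propose to enforce it by combining $\|x_{t+1}-y_t\|\leq O(R)$ with a small enough $\nu$. This cannot work. With the fixed schedule $a_t=\nu t^{(3p-1)/2}$ one has $\lambda_{t+1}=a_{t+1}^2/A_{t+1}\asymp \nu\,t^{(3p-3)/2}$, which grows polynomially in $t$; even using the sharper per-step bound $\|x_{t+1}-y_t\|^2\leq \lambda_{t+1}R^2/((1-\sigma^2)A_{t+1})$ that the potential provides, the quantity $\lambda_{t+1}\|x_{t+1}-y_t\|^{p-1}$ still grows like $t^{(p-1)/2}$ and cannot be bounded by a constant uniformly in $t$. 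If a single tensor step from $y_t$ could always satisfy the Monteiro--Svaiter condition under a predetermined $\lambda_{t+1}$, the extragradient inner loop would be unnecessary and the optimal rate would follow from a one-step-per-iteration scheme, which is precisely what is known not to happen (it is why Nesterov's accelerated tensor method only attains $T^{-(p+1)}$).

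The actual mechanism in \citet{kovalev2022first} is amortized, not per-step: the number of inner extragradient iterations at outer step $t$ is bounded by $1$ plus a quantity that is polynomial in $\lambda_{t+1}M_p\|x_{t+1}-y_t\|^{p-1}$, and the \emph{total} $\sum_t N_t$ is controlled by combining this with the second, usually discarded, output of the A-HPE potential, namely $\sum_t \tfrac{A_{t+1}}{\lambda_{t+1}}(1-\sigma^2)\|x_{t+1}-y_t\|^2\leq R^2$, via a H\"older-type rearrangement against the explicit schedule $\lambda_{t+1}\asymp\nu t^{(3p-3)/2}$. The constant $\nu$ in the statement is chosen exactly so that this amortized sum is at most a constant multiple of $T$, which is where the factor $5$, the additive $7$, and the precise forms of $C_p$ and $D_p$ come from. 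Your sketch never invokes this residual potential term, so it cannot reproduce $\nu$ or $D_p$; to repair the proof you need to replace the ``uniform contraction'' step with this telescoping/H\"older argument over the whole trajectory.
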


\section{Experimental details}
\label{ap:experiments}
\paragraph{Setup.} All methods and experiments were performed using Python 3.11, PyTorch 2.2.2, on a 13-inch MacBook Pro 2019 with 1,4 GHz Quad-Core Intel Core i5 and 8GB memory. All computations are done in torch.double.
All methods are implemented as PyTorch 2 optimizers.
\paragraph{Logistic Regression.}
The logistic regression problem can be formulated as
\begin{equation}
    \label{eq:logreg_main_app}
    f(x) = \tfrac{1}{n} \textstyle{\sum}_{i=1}^{n} \log \ls 1 + e^{-b_i \la a_i, x\ra} \rs + \tfrac{\mu}{2}\|x\|_2^2, 
\vspace{-.1cm}
\end{equation}
where $a_i \in \R^d$ are data features and $b_i \in \lb -1; 1\rb$ are data labels for $i = 1, \ldots, n$. 

We present results on the a9a dataset $(d = 123, n=32561)$ and w8a $(d=300, n=49749)$ from LibSVM by \citet{chang2011libsvm}. We choose the starting point $x_0 = 3e$, where $e$ is a vector of all ones. This choice of $x_0$ allows us to show the convergence of the methods from a far point. 
For Figures \ref{fig:a9a_example} and \ref{fig:a9a_example_rel}, we choose the regularizer $\mu = 10^{-4}$ to get strongly-convex function $f$. For Figures \ref{fig:a9a_accelerated_nesterov},\ref{fig:a9a_near_optimal}, and \ref{fig:w8a_near_optimal}, we choose the regularizer $\mu = 0$ to get a convex function $f$. 
For the better conditioning, we normalize data features $\|a_i\|=1$. For the normalized case, we choose theoretical $L_2=0.1$. We set $L_3=L_2=0.1$ to demonstrate the convergence rates for the same constants $L$. Note, that actual $L_3$ is smaller than $0.1$.

\paragraph{Third-order Nesterov's lower-bound function.}
The $l_2$-regularized third order Nesterov's lower-bound function from \citet{nesterov2021implementable} has the next form
\begin{equation}
    \label{eq:nesterov_func_app}
    f(x) = \tfrac{1}{4}\textstyle{\sum}_{i=1}^{d-1} (x_i - x_{i+1})^4 - x_1 + \tfrac{\mu}{2}\|x\|_2^2.
\end{equation}
For Figures \ref{fig:nesterov_example} and \ref{fig:nesterov_example_rel}, we set $d=20$, $\mu = 10^{-3}$, we've tuned $L_3=L_2=10.$.

\paragraph{Poisson regression.}
Poisson regression is a type of generalized linear model used for analyzing count data and contingency tables. It assumes that the response variable $b_i$ follows a Poisson distribution, and the logarithm of its expected value can be expressed as a linear combination of unknown parameters.
The Poisson regression function has the next form
\begin{equation}
    \label{eq:poisson_app}
    f(x) =  \textstyle{\sum}_{i=1}^{n} e^{\la a_i, x\ra} -b_i \la a_i, x\ra, 
\vspace{-.1cm}
\end{equation}
where $a_i \in \R^d$ are data features and $b_i \in \lb 0, 1, \ldots, k, \ldots\rb$ are countable targets.

We present results for synthetic data: $d=21$, $n=6000$. We set $L_1=L_2=L_3=1$ and $x_0=e$ is all ones.
\begin{figure}[H]
\begin{subfigure}{0.5\textwidth}
    \includegraphics[width=0.98\linewidth]{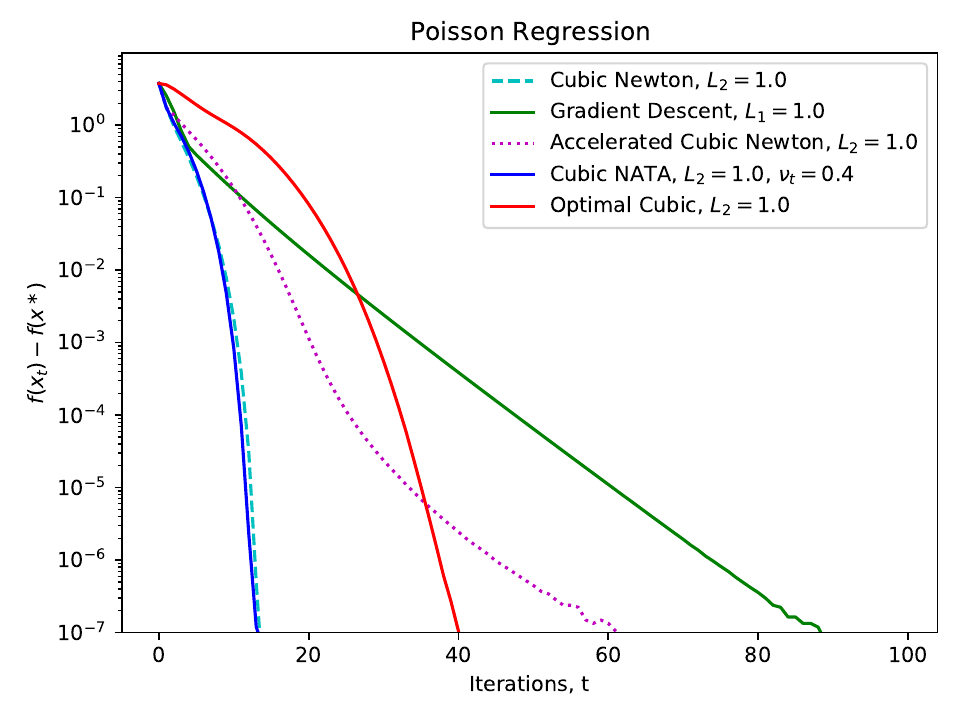}
  \label{fig:poisson_cubic_near_optimal}
\end{subfigure}
\hfill
\begin{subfigure}{0.5\textwidth}
  \includegraphics[width=0.98\linewidth]{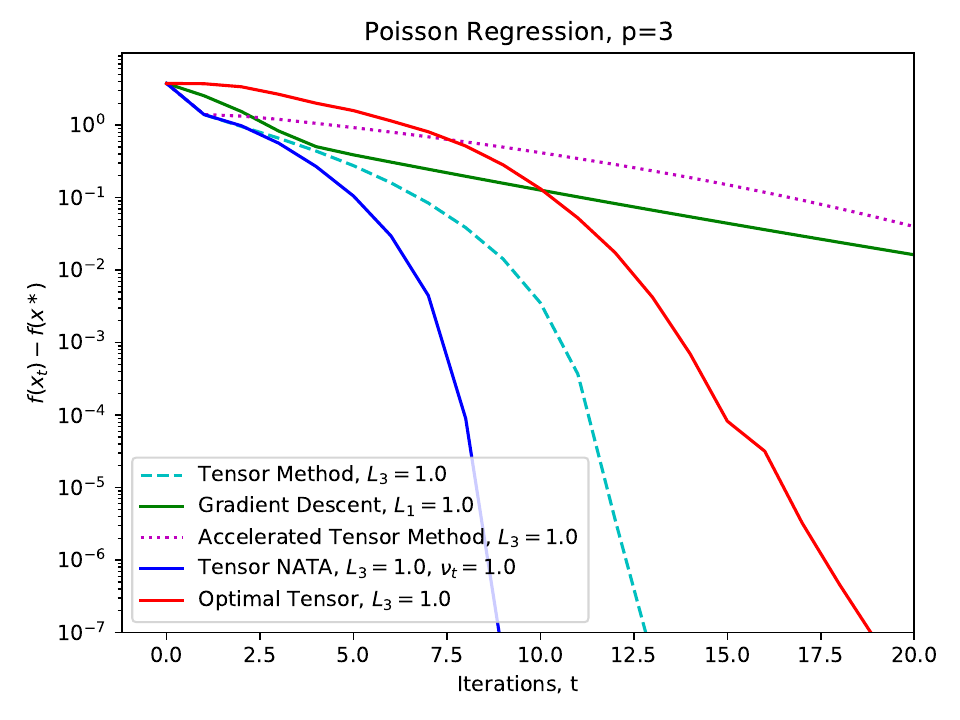}
  \label{fig:poisson_tensor_near_optimal}
\end{subfigure}
\vspace{-.3cm}
    \caption{Comparison of different cubic and tensor accelerated methods on Poisson Regression.}
    \label{fig:poisson_acceleration}
\end{figure}

The Cubic Regularized Newton (CRN) method and NATA with a tuned parameter $\nu$ demonstrate the best performance in Figure \ref{fig:poisson_acceleration} (Left). Notably, CRN exhibits rapid superlinear convergence, likely due to the strong convexity properties of the loss function. Interestingly, NATA with the tuned $\nu$ manages to match CRN's convergence rate. While Optimal Acceleration is slower than both CRN and NATA, it also achieves global superlinear convergence. In Figure \ref{fig:poisson_acceleration} (Right) for $p=3$, the Tensor Nata method is the fastest, followed by the Basic Tensor Method, with the Optimal Tensor method ranking third. All three methods exhibit global superlinear convergence. The classical Nesterov Tensor Acceleration method is the slowest, likely due to its small default $\nu$. Notably, the tensor-based methods outperform their cubic counterparts.

\begin{figure}[H]
 \begin{center}
       \includegraphics[width=0.50\linewidth]{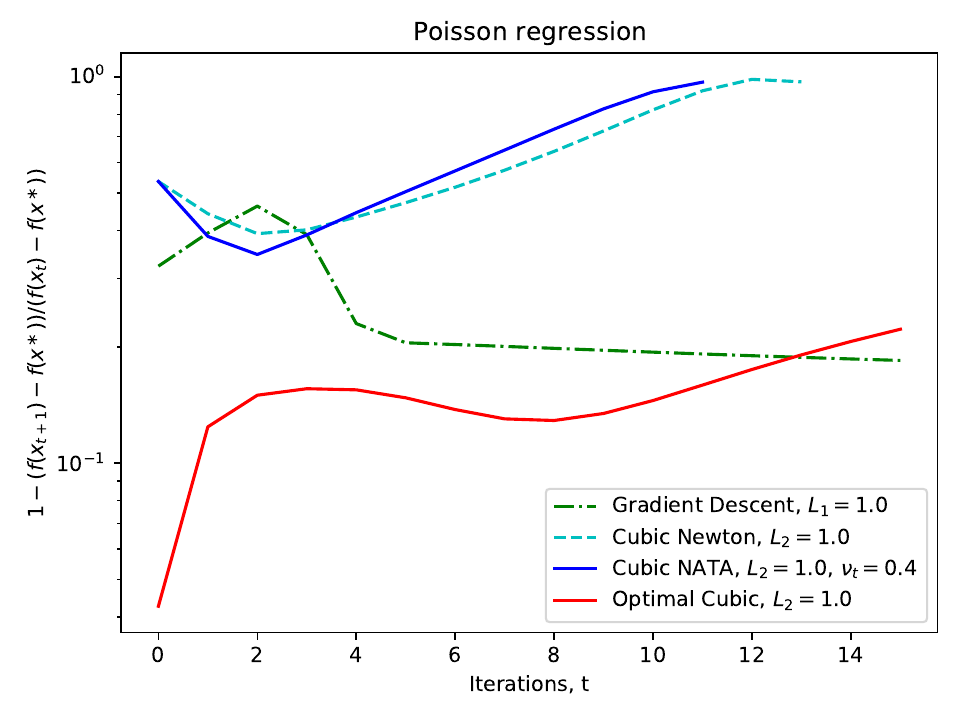}
    \caption{Comparison of the methods by the relative value $1-\tfrac{f(x_{t+1}) - f^{\ast}}{f(x_{t}) - f^{\ast}}$.}
    \label{fig:poison_superfast}
 \end{center}
\end{figure}

The global superlinear performance of these accelerated second-order methods in Figure \ref{fig:poison_superfast} raises the hope of establishing theoretical results on global superlinear convergence for accelerated second-order methods.

\end{document}